\newtheorem{theorem}{\bf Theorem}[section]
\newtheorem{lemma}[theorem]{\bf Lemma}
\newtheorem{proposition}[theorem]{\bf Proposition}
\newtheorem{corollary}[theorem]{\bf Corollary}
\newtheorem{remark}[theorem]{\sc Remark}
\DeclareMathOperator{\GF}{GF}
\DeclareMathOperator{\AGL}{AGL}
\DeclareMathOperator{\diam}{diam}
\DeclareMathOperator{\PSL}{PSL}
\DeclareMathOperator{\sz}{Sz}
\DeclareMathOperator{\gl}{GL}
 \DeclareMathOperator{\soc}{soc}
  \DeclareMathOperator{\Aut}{Aut}
 \newcommand{\gen}[1]{\left\langle#1\right\rangle} 
 \newcommand{\nn}{\mathrel{\unlhd}}
\DeclareMathOperator{\B}{B}
\begin{document}
\title{The Engel graph of a finite group }

\author{Eloisa Detomi}
\address{E. Detomi, Dipartimento di Ingegneria dell'Informazione - DEI, Universit\`a 
di Padova, Via G. Gradenigo 6/B, 35121 Padova, Italy} 
\email{eloisa.detomi@unipd.it}

\author{Andrea Lucchini}
\address{A. Lucchini, Universit\`a di Padova, Dipartimento di Matematica ``Tullio Levi-Civita", Via Trieste 63, 35121 Padova, Italy}
\email{lucchini@math.unipd.it}

\author{Daniele Nemmi}
\address{D. Nemmi, Universit\`a di Padova, Dipartimento di Matematica ``Tullio Levi-Civita", Via Trieste 63, 35121 Padova, Italy}
\email{dnemmi@math.unipd.it}

\begin{abstract}
For a finite group $G,$ we investigate the direct graph $\Gamma(G),$ whose vertices are the non-hypercentral elements of $G$ and where there is an
edge $x\mapsto y$ if and only if $[x,_ny]=1$ for some $n \in \mathbb N.$
We prove that $\Gamma(G)$ is always weakly connected and is strongly connected if $G/Z_{\infty}(G)$ is neither Frobenius nor almost simple. 
\end{abstract}

\maketitle

\section{Introduction}

Let $G$ be a finite group. The commuting graph of $G$ has
as vertices  the non-central elements of $G$ and two distinct vertices
are adjacent if and only if they commute in $G$. Commuting graphs arise naturally in many
different contexts and they have been intensively studied by various authors in recent
years  (see in particular  \cite{ij}, \cite{GP},  \cite{mp}, \cite{parker}, \cite{SS}).

The commuting graph of a group can be viewed  
 through a different lens, which leads naturally to some interesting generalisations.  To do this, first let $\mathcal{A}$ be the class of abelian groups and let $\Lambda_{\mathcal{A}}(G)$ be the graph with vertex set $G$ so that $x$ and $y$ are adjacent if and only if the subgroup $\langle x, y \rangle$ of $G$ is abelian. Clearly, every vertex in the center $Z(G)$ is connected to every other vertex in this graph (i.e. it is a universal vertex of the graph), so it makes sense to consider the more restrictive graph $\Gamma_{\mathcal{A}}(G)$, which is only defined on the  non-central elements of $G$. Then $\Gamma_{\mathcal{A}}(G)$ is the commuting graph of $G$ defined above.  More in general, let $\mathcal{F}$ be  a family of groups and consider the graph $\Lambda_{\mathcal{F}}(G)$ on the elements of $G$, where distinct vertices $x$ and $y$ are adjacent if and only if $\langle x,y \rangle$ is in $\mathcal{F}$. As before, we would like to define the related graph $\Gamma_{\mathcal{F}}(G)$ on $G \setminus I_{\mathcal{F}}(G)$, where $I_{\mathcal{F}}(G)$ is the set of universal vertices of $\Lambda_{\mathcal{F}}(G)$. If $\mathcal{F} = \mathcal{N}$ is the class of nilpotent groups, then $I_{\mathcal{F}}(G)$ coincides with the hypercenter $Z_\infty(G)$ of $G$ 
 which is the final term in the upper central series of $G$ (see \cite[Proposition 2.1]{az}) and $\Gamma_{\mathcal{F}}(G)$ is called 
  the nilpotent graph. Similarly, if $\mathcal{F} = \mathcal{S}$ is the class of soluble  groups, then \cite[Theorem 1.1]{gu} implies that $I_{\mathcal{F}}(G) = R(G)$ is the soluble radical and $\Gamma_{\mathcal{F}}(G)$ is the soluble graph of $G$. Notice that if $Z(G)=1,$ then the nilpotent graph $\Gamma_{\mathcal{N}}(G)$ is connected if and only if 
the abelian graph $\Gamma_{\mathcal{A}}(G)$ is connected and $\diam(\Gamma_{\mathcal{A}}(G))\leq 2\cdot \diam(\Gamma_{\mathcal{N}}(G)).$ Indeed if
$x$ and $y$ are non-trivial elements of $G$  and $\langle x, y \rangle$ is nilpotent, then  $\langle x, y \rangle$ contains a non-trivial central element $z$ and $(x,z,y)$ is a path in   $\Gamma_{\mathcal{A}}(G).$ 

The notion of commuting graph can be also generalized in a different direction. Let $w$ be a word in the free group $F_2$ of rank 2.  We may define a direct graph $\Lambda_w(G)$ on the elements of $G,$ where there is an edge $x\mapsto y$ if and only if $w(x,y)=1.$
Set  
$$\begin{aligned}I_{r,w}(G)&=\{g\in G \mid w(g,x)=1 \text { for all }x \in G\},\\
	I_{l,w}(G)&=\{g\in G \mid w(x,g)=1 \text { for all }x \in G\},\\
	I_w(G)&=I_{r,w}(G) \cap I_{l,w}(G).
\end{aligned}$$
Noticing that $I_{{w}}(G)$ is the set of universal vertices of $\Lambda_{w}(G)$, we define the related graph $\Gamma_{w}(G)$ on $G \setminus I_{w}(G)$.

Set $[x,{}_0 y]=x$ and $[x,{}_{i+1}y]=[[x,{}_iy],y]$ for $i\geq 0$. The word $[x,{}_ny]$ is called the $n$-th Engel word. We will use the symbols $\Lambda_n(G),\Gamma_n(G),$ $I_{r,n}(G), I_{l,n}(G)$ to denote the graphs $\Lambda_w(G)$ and $\Gamma_w(G)$ and the sets  $I_{r,w}(G), I_{l,w}(G),$
when $w=[x,_ny]$ is the $n$-Engel word. Notice
that $\Gamma_1(G)$ coincides with the commuting graph of $G.$
	
	In this paper we want to study  the graph $\Lambda(G)=\cup_{n > 0}\Lambda_n(G),$
		 i.e. the graph whose vertices are the elements of $G$ and where there is an edge $x\mapsto y$ if and only if $[x,_ny]=1$ for some $0< n \in \mathbb N.$

The union $I_l(G)=\cup_n I_{l,n}(G)$ is the set of the left Engel elements and coincides with the Fitting subgroup $F(G)$ of $G.$ The union $I_r(G)=\cup_n I_{r,n}(G)$ is the set of the right Engel elements and coincides with the hypercenter $Z_\infty(G)$ of $G$  (see e.g. \cite[12.3.7]{rob}). 
 In particular the  elements of $G$ that are connected to every other vertex (both as starting and ending vertex of some edge) are the   elements of
$I_l(G)\cap I_r(G)=Z_\infty(G).$  So it makes sense to consider the more restrictive graph $\Gamma(G)$, which is only defined on the  non-hypercentral elements of $G$.
We will call $\Gamma(G)$ the Engel graph of $G.$ 

Our aim is to study the connectivity properties of the Engel graph. 
Since  $\Gamma(G)$  is a direct graph we may consider the strong connectivity and the weak connectivity.
 A directed graph is {\slshape{strongly connected }} if it contains a directed path from $x$ to $y$ (and from $y$ to $x$) for every pair of vertices $(x, y).$ The strong components are the maximal strongly connected subgraphs. A directed graph is {\slshape{weakly connected}}  if the undirected underlying graph obtained by replacing all directed edges of the graph with undirected edges is a connected graph. 
Our first main result is the following.

\begin{theorem}\label{wcon}
	If $G$ is a finite group, then $\Gamma(G)$ is weakly connected and its undirected diameter is at most $10.$
\end{theorem}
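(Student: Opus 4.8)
The plan is to reduce, in two stages, to a situation where the connecting elements are easy to locate, and then to isolate the almost simple case as the genuine difficulty.

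\emph{Stage 1: universal sinks.} The first observation is that every element of the Fitting subgroup is a universal ``sink'' of $\Gamma(G)$. Indeed $F(G)=I_l(G)=\bigcup_n I_{l,n}(G)$, so if $f\in F(G)$ then $[x,{}_{m}f]=1$ for some $m$ and \emph{all} $x\in G$; hence there is an edge $x\mapsto f$ from every vertex $x$ to $f$. Consequently, if $F(G)\neq Z_\infty(G)$ we may pick $f\in F(G)\setminus Z_\infty(G)$; then $f$ is a vertex adjacent (as an endpoint) to every other vertex, so the underlying undirected graph has a universal vertex and diameter at most $2$. This disposes of every group except those with $F(G)=Z_\infty(G)$, which I treat next.

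\emph{Stage 2: reduction modulo the hypercentre.} Write $\bar G=G/Z_\infty(G)$. If $[\bar x,{}_{n}\bar y]=1$ then $z=[x,{}_{n}y]\in Z_\infty(G)=I_r(G)$, and since right Engel elements satisfy $[z,{}_{m}y]=1$ for some $m$ we get $[x,{}_{n+m}y]=1$; conversely an edge of $\Gamma(G)$ clearly projects to one of $\Gamma(\bar G)$. Thus, for arbitrary choices of preimages, edges of $\Gamma(\bar G)$ lift to edges of $\Gamma(G)$, and two vertices lying in the same fibre of $G\to\bar G$ are joined through any common neighbour (which exists once $\Gamma(\bar G)$ is known to be connected), hence lie at undirected distance at most $2$; therefore $\diam\Gamma(G)\le\max(2,\diam\Gamma(\bar G))$. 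The same lifting argument applied to $I_l$ shows that a universal sink of $\Gamma(\bar G)$ would force an element of $F(G)\setminus Z_\infty(G)$, so in the remaining case $F(G)=Z_\infty(G)$ we must have $F(\bar G)=1$. Hence $\soc(\bar G)=S_1\times\cdots\times S_k$ is a direct product of non-abelian simple groups with $C_{\bar G}(\soc(\bar G))=1$. It now suffices to bound $\diam\Gamma(\bar G)$; renaming $\bar G$ as $G$, I may assume $G$ has trivial hypercentre, trivial Fitting subgroup and self-centralising socle $\soc(G)=S_1\times\cdots\times S_k$, every non-trivial element being a vertex.

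\emph{Stage 3: pushing vertices into a single simple factor.} All edges used here are bidirectional, arising from commuting (hence nilpotent) pairs. Decomposing a vertex $x$ into its pairwise-commuting primary components and discarding trivial ones, $x$ is joined to a non-trivial prime-power element, and a suitable power of the latter joins it to a non-trivial element $u$ of prime order $p$. Now $u$ permutes the factors $S_i$, and on each orbit it has non-trivial fixed points, by Thompson's theorem on fixed-point-free automorphisms of prime order for a fixed factor and by the diagonal subgroup for a permuted orbit; thus $C_{\soc(G)}(u)\neq1$ and $u$ is joined to a non-trivial $z\in\soc(G)$. Finally $z$ commutes with each of its single-factor components, one of which is non-trivial. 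Hence every vertex lies at undirected distance at most $4$ from an element supported in a single factor $S_i$. When $k\ge2$, two single-factor elements in factors $S_i,S_j$ commute if $i\neq j$, and otherwise are joined through any non-trivial element of a third factor, so they lie at distance at most $2$; concatenating gives $\diam\Gamma(G)\le 4+2+4=10$.

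\emph{The main obstacle: the almost simple case.} The argument breaks down precisely when $k=1$, i.e. when $\soc(G)=S$ is a single non-abelian simple group and $G$ is almost simple. Here Stage~3 lands every vertex at distance at most $3$ from the non-trivial elements of $S$, so it remains to connect the non-trivial elements of $S$ by undirected paths of length at most $4$ inside $\Gamma(G)$. This is the hard part: the commuting graph of $S$ may be disconnected, with its components governed by the prime graph of $S$, so the genuinely directional Engel edges $x\mapsto y$ with $[x,{}_{n}y]=1$ and $n>1$ — for instance those arising when $y$ is a $p$-element and $x\in O_p(\langle x,y\rangle)$, a join that need not be nilpotent — must be exploited to bridge the commuting-graph components. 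Producing such bridges uniformly, and thereby establishing the bound inside $S$, requires detailed structural information about simple groups (centraliser and Sylow structure, element orders, the prime graph), and hence in practice the classification of finite simple groups; this is where the constant $10$ is ultimately pinned down.
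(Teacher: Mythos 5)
Your Stages 1 and 2 are correct and match the paper's first reductions (Lemma \ref{reduce}, together with Lemma \ref{trivial} (2): a non-hypercentral element of $F(G)$ is a universal sink, so the undirected diameter is at most $2$ in that case). The Stage 3 argument for $k\ge 2$ is also sound. But the proposal does not prove the theorem: the case $k=1$, i.e.\ $G/Z_\infty(G)$ almost simple, is explicitly left open, and it is not a marginal case --- it is precisely where the statement has content. For $A_5$, $\PSL(2,2^f)$ or the Suzuki groups the Engel graph is not strongly connected and the commuting graph is disconnected, so one really must produce directed Engel edges bridging the commuting-graph components of $S$ and then control path lengths to get the uniform bound $10$. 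Saying that this ``requires detailed structural information and in practice the classification'' is a diagnosis, not a proof; as written, your argument establishes nothing for $A_5$.

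The paper closes this case (and in fact every case with trivial soluble radical, making your socle analysis unnecessary) with one external input that your proposal misses: the theorem of \cite{bgl} that the soluble graph of a finite insoluble group with $R(G)=1$ is connected with diameter at most $5$. Granting that, if $x_1,x_2$ generate a soluble subgroup then $F(\langle x_1,x_2\rangle)\neq 1$ and any nontrivial $f$ in it satisfies $x_1\mapsto f$ and $x_2\mapsto f$; thus each edge of the soluble graph becomes an undirected path of length $2$ in $\Gamma(G)$, and the diameter bound $5$ doubles to $10$. This single step is exactly what handles the almost simple case. To complete your proof you must either import this result or supply an independent argument for $k=1$; without one the proof has a genuine gap.
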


The study of the strong connectivity of   $\Gamma(G)$ is more complicated. It can be easily proved that 	$\Gamma(G)$ is strongly connected if and only if $\Gamma(G/Z_\infty(G))$ is  strongly connected (see Lemma \ref{reduce}) and that 
 the Engel graph of a Frobenius group is not strongly connected (Lemma \ref{frob-graph}). 
 Our main result is the following. 
 
\begin{theorem}\label{mainth}
	Suppose that $G/Z_\infty(G)$ is not an almost simple group. Then 
 $\Gamma(G)$ is strongly connected if and only if $G/Z_\infty(G)$ is not a Frobenius group.
\end{theorem}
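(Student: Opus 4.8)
The plan is to use Lemma \ref{reduce} to reduce to the case $Z_\infty(G)=1$, since $\Gamma(G)$ is strongly connected if and only if $\Gamma(G/Z_\infty(G))$ is. For a finite group the condition $Z_\infty(G)=1$ is equivalent to $Z(G)=1$ (if $Z(G)=1$ then $Z_2(G)/Z_1(G)=Z(G)=1$, and so on up the upper central series), so in the reduced group every nontrivial element is a vertex and the vertex set is $G\setminus\{1\}$. The ``only if'' direction is then immediate: if $G/Z_\infty(G)$ is Frobenius, Lemma \ref{frob-graph} together with Lemma \ref{reduce} shows $\Gamma(G)$ is not strongly connected. Thus the content is the converse: if $G$ has trivial centre and is neither almost simple nor Frobenius, then $\Gamma(G)$ is strongly connected. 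Two structural facts drive the argument. First, since $F(G)=I_l(G)$ is the set of left Engel elements, every nontrivial $f\in F(G)$ satisfies $[x,{}_{n}f]=1$ for a suitable $n=n(x)$ and hence receives an edge $x\to f$ from every vertex $x$: the nontrivial elements of $F(G)$ are universal sinks of $\Gamma(G)$. Second, if $M_1,M_2$ are normal subgroups with $[M_1,M_2]=1$, then for $x\in M_1\setminus\{1\}$ and $y\in M_2\setminus\{1\}$ we have $[x,y]=1$, so both edges $x\to y$ and $y\to x$ are present; routing through $M_2$ also connects any two elements of $M_1$, so $(M_1\cup M_2)\setminus\{1\}$ is a strongly connected set.

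I would then isolate a hub lemma: to prove strong connectivity it suffices to exhibit a strongly connected set $C$ of vertices such that every vertex reaches $C$ along a directed path and is reachable from $C$ along a directed path, after which concatenation finishes the proof. The hub will be built from $N=\soc(G)$. Because $G$ is not almost simple and $Z(G)=1$, one of the following holds: either $F(G)\neq 1$, so $N$ contains an abelian minimal normal subgroup; or $N$ is a product of nonabelian simple groups and, $G$ failing to be almost simple, either $N$ is not simple or $C_G(N)\neq 1$. In every case $G$ contains two nontrivial normal subgroups centralising each other, so by the second fact above a strongly connected hub $C$ is available, and the whole problem reduces to reaching $C$ from an arbitrary vertex and reaching an arbitrary vertex from $C$.

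In the case $F(G)\neq 1$, take an abelian minimal normal subgroup $V\le F(G)$, an irreducible $\F_p[G]$-module. Every vertex reaches $V$ for free, since the nontrivial elements of $V$ are universal sinks, so the task is to show $V$ reaches every vertex $y$. Writing $V$ additively one has $[v,{}_{n}y]=v(y-1)^n$, so an edge $v\to y$ exists for some $v\in V\setminus\{1\}$ precisely when $1$ is an eigenvalue of $y$ on $V$, that is, when $C_V(y)\neq 1$; thus $V$ directly reaches every $y$ acting with nonzero fixed points. For the remaining $y$, which act fixed-point-freely on $V$, I would build a detour $v\to z\to\cdots\to y$ through an element $z$ with $C_V(z)\neq 1$ lying in $C_G(y)$ or in a suitable subgroup containing $y$, using a coprime/Sylow analysis of the action of $\gen{y}$ on $V$ and on the other chief factors of $F(G)$. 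The existence of such a bridging element is exactly the point at which the non-Frobenius hypothesis is used: a global failure to find bridges would force a complement to act fixed-point-freely and make $G$ Frobenius.

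In the case $F(G)=1$, the socle $N$ is a product of nonabelian simple components and the non-almost-simple hypothesis yields two commuting nontrivial normal subgroups, hence a strongly connected hub $C$ inside $N$. It remains to show that every vertex $y$ reaches $C$ and is reachable from $C$; reaching $C$ from $y$ amounts to finding, in some component $S$, an element $s$ with $[y,{}_{n}s]=1$, while reaching $y$ from $C$ amounts to finding $s\in S$ with $C_S(y)$-type contact, and both reduce to the existence of elements of prescribed orders commuting with parts of $y$, which I would establish from properties of finite simple groups via the classification. I expect the principal obstacle, in both cases, to be precisely the fixed-point-free elements of the previous paragraph and their analogues here: these are exactly the elements responsible for the failure of strong connectivity in Frobenius groups, and the heart of the proof is showing that, under the non-Frobenius and non-almost-simple hypotheses, such elements never form a \emph{global} obstruction, so that a return path to the hub can always be assembled.
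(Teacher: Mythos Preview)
Your skeleton is sound---reduce to $Z_\infty(G)=1$, dispose of the Frobenius case, build a strongly connected hub, and show every vertex both reaches and is reached by the hub---and this is indeed the shape of the paper's argument. In the case $F(G)=1$ you are making life harder than necessary: you do not need the classification. Once the hub $C\subseteq F^*(G)\setminus\{1\}$ is strongly connected (via the central-product observation, Lemma~\ref{prodo}), Thompson's theorem on fixed-point-free automorphisms of prime order (Theorem~\ref{Thompson}) gives, for any $x\in G\setminus\{1\}$ with a prime-order power $\tilde x$, a nontrivial $s\in F^*(G)$ centralising $\tilde x$; hence $x\leftrightarrow\tilde x\leftrightarrow s$, and $x$ falls into the hub's component. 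This is exactly Lemma~\ref{norma}.

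The genuine gap is in the case $F^*(G)=F(G)$, which is where essentially all of the work lies. Your proposed ``coprime/Sylow analysis'' to bridge from $V$ to a fixed-point-free $y$ is not a proof, and the heuristic ``a global failure to find bridges would force a complement to act fixed-point-freely'' does not follow directly: individual elements $y$ with $C_{F(G)}(y)=1$ need not assemble into a Frobenius complement, and there may be no complement at all. The paper's route is quite different and substantially more delicate: it works by contradiction on $d(x,y)>4$, first forcing $N_G(\langle y\rangle)F(G)$ to be Frobenius and $C_G(y_r)$ to have odd order (Lemma~\ref{normalizer}, via a global argument with the unique involution of a Frobenius complement), then passing to $J/F(G)=F(G/F(G))$ and $J^*/F(G)=F^*(G/F(G))$. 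When $J=J^*$ one pushes $y_r$ into $Z(J/F(G))$ using a nontrivial module-theoretic lemma of Parker and a Frattini argument to force $G=N_G(\langle y_r\rangle)F(G)$ Frobenius (Lemma~\ref{centr}, Proposition~\ref{soluble}); when $J\neq J^*$ one invokes a separate analysis (Lemma~\ref{cru}, using Burnside's $p$-nilpotency criterion) of the case $X/F(X)\cong S^t$ with $S$ simple, together with a $2$-Sylow argument in $J^*/J$. None of these ingredients---the involution trick in Lemma~\ref{normalizer}, Parker's lemma, the Burnside step---is visible in your outline, and without them the bridge from the hub back to an arbitrary $y$ cannot be closed.
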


 The case where  $G/Z_\infty(G)$ is an almost simple group is much more difficult to handle. Indeed, 
   there  are infinitely many simple groups whose Engel graph is strongly connected, for example the alternating groups $A_n$ if $n\geq 6$ (see Subsection \ref{alterni}), but also infinitely many simple groups whose Engel graph is not strongly connected, for example $\PSL(2,q)$ when $q$ is even and the Suzuki groups $\sz(q)$ (see Subsection \ref{sconnessi}). 
 An immediate application of
 Lemma \ref{norma} is that if $G$ is almost simple and $\Gamma(\soc(G))$ is strongly connected, then $\Gamma(G)$
 is also strongly connected. The converse is not always true: for example $\Gamma(S_5)$ is strongly connected, but $\Gamma(A_5)$ is not.

 In the case where $G$ is soluble and $G/Z_\infty(G)$ is not a Frobenius group, we can also bound the diameter of  $\Gamma(G)$.  
 
 \begin{theorem}\label{diam4}
	Suppose that $G$ is soluble and $G/Z_\infty(G)$ is not a Frobenius group. Then 
  $\diam\Gamma(G)\leq 4$. Furthermore, there exists a soluble group $G$ such that $\diam\Gamma(G) = 4$.
\end{theorem}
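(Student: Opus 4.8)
The plan is to reduce at once to the case $Z_\infty(G)=1$. By Lemma \ref{reduce} the quotient $\bar G=G/Z_\infty(G)$ preserves strong connectivity, and the same argument preserves \emph{directed distances}: an edge of $\Gamma(G)$ clearly projects to one of $\Gamma(\bar G)$, while conversely $[\bar x,_n\bar y]=1$ gives $[x,_ny]\in Z_\infty(G)$, whence $[x,_{n+c}y]=1$ (using $[Z_i(G),G]\le Z_{i-1}(G)$, where $c$ is the nilpotency length of $Z_\infty(G)$), so edges also lift. Since the vertex map $G\setminus Z_\infty(G)\to\bar G\setminus\{1\}$ is onto and edges lift and project, $\diam\Gamma(G)=\diam\Gamma(\bar G)$, and I may assume $Z_\infty(G)=1$. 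In particular $Z(G)=1$ and, $G$ being soluble, $C_G(F)\le F$ for $F=F(G)$.

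The backbone is that $F=I_l(G)$ is the set of left Engel elements, so for every vertex $x$ and every $f\in F\setminus\{1\}$ one has $[x,_nf]=1$ for some $n$, i.e. $x\mapsto f$ is an edge; thus $d(x,f)=1$ for all vertices $x$. Writing $d(F,y)=\min_{f\in F\setminus\{1\}}d(f,y)$, it therefore suffices to prove $d(F,y)\le 3$ for every vertex $y$, since then $d(x,y)\le 1+d(F,y)\le 4$. I also note that $d(F,y)$ depends only on the conjugacy class of $y$, because $F$ is normal and conjugation is a graph automorphism of $\Gamma(G)$.

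I then argue by cases on the action of $y$ on $F$. If $y\in F$ we are done, and if $C_F(y)\neq 1$ I pick $1\ne f\in C_F(y)$, so that $f\mapsto y$ (as $[f,y]=1$) and $x\mapsto f$, giving $d(x,y)\le 2$. The substance lies in the case where $y$ acts fixed-point-freely on $F$, so that $\langle F,y\rangle$ is Frobenius (here $\langle y\rangle\cap F=1$, since any $1\ne y^k\in F$ would lie in $C_F(y)$). I first try to catch a fixed point through the centraliser: if some $1\ne w\in C_G(y)$ has $C_F(w)\neq 1$ — for instance a power $y^k\neq1$ that itself is not fixed-point-free — then $1\ne f\in C_F(w)$ gives the length-$3$ path $x\mapsto f\mapsto w\mapsto y$. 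The genuinely hard case, and the main obstacle, is when \emph{every} nontrivial $w\in C_G(y)$ is fixed-point-free on $F$, so that $FC_G(y)$ is Frobenius. Here I must use the global hypothesis that $G$ is not Frobenius, which forces an element $g\notin F$ with $C_F(g)\neq 1$: the fixed-point-free elements do not exhaust $G\setminus F$. The goal is then to produce a \emph{higher} Engel in-neighbour of $y$ carrying a fixed point, namely $w$ with $[w,_ny]=1$ for some $n\ge 2$ and $C_F(w)\neq 1$, since then $x\mapsto f\mapsto w\mapsto y$ with $f\in C_F(w)$ has length $3$ and $d(x,y)\le 4$. This is exactly the mechanism behind $(12)\mapsto(123)$ in $S_4$, where $[(12),_2(123)]=1$ while $(12)$ fixes $(12)(34)\in F$. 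I expect to extract such a $w$ by analysing a chief factor $M/F$ above $F$ together with the Engel sink of $y$ on the relevant section, invoking the structural description of the Frobenius case underlying the proof of Theorem \ref{mainth}.

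Finally, for sharpness I will exhibit an explicit soluble $G$ with $Z_\infty(G)=1$ that is not Frobenius and contains an ordered pair of vertices at directed distance exactly $4$. The natural shape is $G=V\rtimes H$ with $V$ elementary abelian and $H$ arranged so that some vertex $y$ is fixed-point-free on $V$, every element commuting with $y$ is fixed-point-free, and every in-neighbour of $y$ that does fix a point of $V$ is itself reachable from $F$ only in two steps; this realises $d(F,y)=3$. Checking that no shorter directed route exists — in particular that the length-$2$ and length-$3$ shortcuts used in the previous cases are genuinely unavailable for this $y$ and a suitable starting vertex $x$ — is the computational heart of the sharpness claim.
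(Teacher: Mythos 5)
Your reduction to $Z_\infty(G)=1$ and the strategy of routing every path through $F=F(G)$ (so that $d(x,y)\le 1+d(F,y)$ and it suffices to bound $d(F,y)$ by $3$) match the skeleton of the paper's argument, and your easy cases ($y\in F$; $C_F(y)\neq 1$; some $1\neq w$ in $C_G(y)$, or better in $N_G(\langle y\rangle)$, with $C_F(w)\neq 1$) are correct --- they correspond to Lemma \ref{trivial} and part (1) of Lemma \ref{normalizer}. (Minor slip: $y$ acting fixed-point-freely on $F$ does not make $F\langle y\rangle$ Frobenius, since a proper power of $y$ may have fixed points; you recover from this only because you treat that subcase separately.) The genuine gap is that the proof stops exactly where the work begins. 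In the case you call ``genuinely hard'' you only announce a goal --- produce $w$ with $[w,_n y]=1$, $n\ge 2$, and $C_F(w)\neq 1$ --- and say you \emph{expect} to extract it from a chief factor above $F$ and ``the structural description underlying Theorem \ref{mainth}''. Nothing in the proposal produces such a $w$: the hypothesis that $G$ is not Frobenius hands you some $g\notin F$ with $C_F(g)\neq 1$, but there is no reason for that $g$ to be an Engel in-neighbour of $y$. What the paper actually proves is the contrapositive: if $d(x,y)>4$ then $G$ is Frobenius. This needs the whole chain: Lemma \ref{normalizer}(2) (the centralizer of a prime-order power $y_r$ of $y$ has odd order and is metacyclic, via a unique-involution argument and $G=C_G(z)F$), the coprimality of $|J/F|$ and $|F|$, Lemma \ref{centr} (which invokes \cite[Lemma 2.5]{parker} on fixed-point-free coprime actions to force $y_r$ into $Z(J/F)$ modulo $F$), and the Frattini argument of Proposition \ref{soluble} giving $G=N_G(\langle y_r\rangle)F$. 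None of this, nor any substitute for it, is present, so the bound $\diam\Gamma(G)\le 4$ is not established.

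The sharpness half is likewise a shape rather than a proof. The paper exhibits a concrete group $G=F\rtimes D$ with $F=\GF(q^r)^2$, $D=\langle x,c\rangle$, $x=z\beta$ a scalar of order $r^2$ twisted by the Frobenius field automorphism and $|c|=t$ a prime dividing $(q^r-1)/(q-1)$ but not $q-1$; establishing $\diam\Gamma(G)=4$ then requires computing $\B_1(\{x\})=\langle x\rangle\setminus\{1\}$ and $\B_2(\{x\})=D\setminus\{1\}$, and showing that the Frobenius subgroup $[F,C]D$ prevents $\B_3(\{x\})$ from covering $[F,C]\setminus\{1\}$ (via Lemma \ref{frob-graph}). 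You would need to supply an explicit group of this kind and carry out these verifications; as written, the ``computational heart'' you defer is the entire content of the lower bound.
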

 
It is interesting to note that if $Z_\infty(G)=1$ and $G$ is not almost simple, then $\Gamma(G)$ is strongly connected if and only if $\Gamma_2(G)$ is strongly connected (see Theorem \ref{gamma2}). This is no more true if the assumption $Z_\infty(G)=1$ is removed. For example $\Gamma_3(\gl(2,3))$
is strongly connected, but $\Gamma_2(\gl(2,3))$ is not (see Remark \ref{osser}).

The nilpotent graph and the Engel graph of $G$ have the same vertex-set, $G \setminus Z_\infty(G). $ As all elements of a nilpotent group are (left and right) Engel, if $g_1$ and $g_2$ are adjacent in the nilpotent graph, then $g_1\mapsto g_2$ and $g_2\mapsto g_1$ in the Engel graph. So if the nilpotent graph of
$G$ is connected, then the Engel graph $\Gamma(G)$ is strongly connected. The converse is false, indeed if $G$ is $S_4$, and more in general a 2-Frobenius group, or $A_p$, when $p$ is a prime with $p>5$, then the commuting graph of $G$ is not connected (see \cite{parker}), and the same holds for its nilpotent graph, as mentioned above. On the other hand, it follows from Theorem \ref{mainth} and Subsection \ref{alterni} that the Engel graph of $G$ is  strongly connected. 

The paper is organized as follows. In Section \ref{sec:2} we collect some preliminary results about the Engel graph and prove Theorem \ref{wcon}. In Section \ref{sec:3} we prove Theorem \ref{mainth}. The proof of Theorem \ref{diam4} will be completed in Section \ref{sec:4}. In Section \ref{sec:5} we analyze the  Engel graph of some families of simple and almost simple groups. Finally, in Section \ref{sec:6}, with the help  of some computer calculations,  we prove that the Engel graph of a sporadic simple group is strongly connected. 

\subsection*{Acknowledgements} We would like to thank  Pablo Spiga for fruitful discussions and  his precious help in the computations with some sporadic simple groups.

\section{Preliminary results and proof of Theorem \ref{wcon}}\label{sec:2}

In this and in the following sections we will use the symbols
$x\mapsto  y$ and $x\mapsto_n y$ to say there there is an edge from $x$ to $y$ in the graph $\Lambda(G)$, or respectively $\Lambda_n(G)$,  and the
 symbols $x \twoheadrightarrow    y$ 
and $x \twoheadrightarrow_n    y$
to say that there is a direct path in the graph $\Gamma(G)$, or respectively $\Gamma_n(G)$, joining $x$ to $y.$ 
 Recall that $\Gamma(G)$ and $\Gamma_n(G)$  are the subgraphs  
  defined on the set of non-universal vertices of $\Lambda(G)$ and $\Lambda_n(G)$, respectively. 

\begin{lemma}\label{reduce}
	$\Gamma(G)$ is weakly or strongly connected if and only if $\Gamma(G/Z_\infty(G))$ is weakly or strongly connected.
\end{lemma}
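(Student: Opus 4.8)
The plan is to reduce everything to a single ``edge-correspondence'' fact and then transport paths between $\Gamma(G)$ and $\Gamma(G/Z)$, where I write $Z := Z_\infty(G)$ and let $\bar g$ denote the image of $g\in G$ in $G/Z$. First I would pin down the vertex sets. Since $Z$ is the final term of the upper central series, $Z(G/Z)=1$ and hence $Z_\infty(G/Z)=1$; thus the vertices of $\Gamma(G/Z)$ are exactly the non-identity elements of $G/Z$, which are precisely the images $\bar g$ of the non-hypercentral elements $g$ of $G$. Moreover, if $\Gamma(G)$ is non-empty then $G\neq Z$, so $G/Z$ is a non-trivial group with trivial centre and therefore $|G/Z|\ge 6$; in particular $\Gamma(G/Z)$ then has at least two vertices, a point I will need below. (If $\Gamma(G)$ is empty then $G=Z$ is nilpotent and both graphs are empty, so there is nothing to prove.)

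The engine of the proof is the following claim: \emph{for all $x,y\in G$ one has $[x,{}_n y]=1$ for some $n$ if and only if $[\bar x,{}_m\bar y]=1$ for some $m$}. The forward implication is immediate, because the projection $G\to G/Z$ is a homomorphism and so carries $[x,{}_n y]$ to $[\bar x,{}_n\bar y]$. For the converse, suppose $[\bar x,{}_m\bar y]=1$; then $z:=[x,{}_m y]\in Z=Z_\infty(G)$. Since $Z_\infty(G)=I_r(G)$ is the set of right Engel elements, there is a $k$ with $[z,{}_k y]=1$, and a one-line induction gives $[x,{}_{m+k} y]=[z,{}_k y]=1$, as required. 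Rephrased for edges, this says that for $x,y\in G$ with $\bar x\neq\bar y$ there is an edge $x\mapsto y$ in $\Lambda(G)$ if and only if there is an edge $\bar x\mapsto\bar y$ in $\Lambda(G/Z)$, and — crucially — this equivalence holds for \emph{every} choice of coset representatives of $\bar x$ and $\bar y$.

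With the claim in hand, the implication ``$\Gamma(G)$ connected $\Rightarrow$ $\Gamma(G/Z)$ connected'' is the easy direction: given two vertices of $\Gamma(G/Z)$, lift them to vertices of $\Gamma(G)$, take a path joining the lifts (directed in the strong case, undirected in the weak case), and project it; each edge either maps to an edge of $\Gamma(G/Z)$ or collapses when its endpoints lie in one coset, and the surviving edges form a walk joining the two prescribed vertices. For the converse, let $a,b$ be vertices of $\Gamma(G)$ with $\bar a\neq\bar b$ and take a path $\bar a=\bar v_0\mapsto\cdots\mapsto\bar v_t=\bar b$ in $\Gamma(G/Z)$; choosing $v_0=a$, $v_t=b$ and arbitrary lifts in between (all non-hypercentral, being lifts of vertices of $\Gamma(G/Z)$, with consecutive ones automatically distinct since consecutive $\bar v_i$ are), the claim turns each $\bar v_{i-1}\mapsto\bar v_i$ into an edge $v_{i-1}\mapsto v_i$ of $\Gamma(G)$, producing a path from $a$ to $b$. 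The identical argument on the underlying undirected graphs settles the weak case.

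The one genuine subtlety — and the step I expect to be the main obstacle — is to connect two \emph{distinct} vertices $a\neq b$ of $\Gamma(G)$ lying in the same coset, i.e. with $\bar a=\bar b$, since then there is no quotient path to lift. Here I would exploit the second vertex $\bar c\neq\bar a$ of $\Gamma(G/Z)$ guaranteed above. In the weak case, connectivity of $\Gamma(G/Z)$ gives an undirected edge at $\bar a$, which (applying the claim to the representatives $a$ and $b$ separately) lifts to undirected edges $a-c$ and $b-c$ in $\Gamma(G)$, yielding the path $a-c-b$. In the strong case I would instead use strong connectivity of $\Gamma(G/Z)$ to obtain directed paths from $\bar a$ to $\bar c$ and from $\bar c$ to $\bar a$; lifting the first with initial vertex $a$ gives a directed path from $a$ to some lift $c'$ of $\bar c$, and lifting the second with initial vertex $c'$ and terminal vertex $b$ (legitimate, since the edge-correspondence holds for the fixed endpoint representatives) gives a directed path from $c'$ to $b$. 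Concatenating produces a directed walk from $a$ to $b$. Once this same-fibre case is dispatched, both equivalences follow.
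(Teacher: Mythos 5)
Your proof is correct and follows essentially the same route as the paper's: both rest on the edge correspondence that $[x,{}_n y]=1$ for some $n$ if and only if $[\bar x,{}_m \bar y]=1$ for some $m$, proved by pushing $[x,{}_m y]$ into $Z_\infty(G)$ and then killing it with finitely many further commutators (you via the right-Engel characterization of $Z_\infty(G)$, the paper via $Z_\infty(G)=Z_m(G)$). The extra bookkeeping you supply --- the vertex-set identification and the case of two distinct vertices lying in the same coset of $Z_\infty(G)$ --- is sound and merely makes explicit what the paper leaves implicit.
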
	
\begin{proof}
If $y$ is adjacent to $x$ in $\Gamma(G)$, then its image $\bar y$ in $G/Z_\infty(G)$ is adjacent to $\bar x$ in $\Gamma(G/Z_\infty(G))$. Conversely, if 
$\bar y \in G/Z_\infty(G)$ is adjacent to $\bar x$ in $\Gamma(G/Z_\infty(G))$, then $[y, _n x] \in Z_\infty(G)= Z_m(G)$ for some $n$ and $m$. Thus 
$[ y, _{n+m} x]=[[y, _n x],_m x] =1$ and $y$ is adjacent to $x$ in $\Gamma(G)$. 
\end{proof}

\begin{lemma}\label{trivial}  Let $x, y \in G.$ 
\begin{enumerate}
\item If $[y,_n x] = [y,_m x] \neq 1$ for some  $0 \le n <m$, then $y$ is not adjacent  to $x$ in $\Gamma(G).$
\item If $y \in F(G)$, then $x \mapsto y$ for every $x \in G$.
\item If $y \in N_G(\langle x \rangle)$,  then $y \mapsto_2 x$.
\end{enumerate}
\end{lemma}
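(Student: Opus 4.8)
For (1), the guiding observation is that the Engel sequence $y=[y,{}_0 x],[y,{}_1 x],[y,{}_2 x],\dots$ is obtained by iterating the map $\phi\colon g\mapsto [g,x]$, and that $\phi(1)=[1,x]=1$. First I would note that once the sequence reaches $1$ it remains $1$ forever; since $[y,{}_n x]\neq 1$, this already forces $[y,{}_j x]\neq 1$ for all $0\le j\le n$. Then, from $[y,{}_n x]=[y,{}_m x]$ I would deduce, by applying $\phi$ repeatedly, that the sequence is periodic from index $n$ on with period $k=m-n$, that is, $[y,{}_i x]=[y,{}_{i+k}x]$ for every $i\ge n$. If some term with index $\ge n$ were trivial, then $\phi(1)=1$ would make all later terms trivial, and periodicity would then force $[y,{}_n x]$ itself to be trivial, a contradiction. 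Combining the two ranges yields $[y,{}_j x]\neq 1$ for every $j\ge 0$, so there is no edge $y\mapsto x$ in $\Lambda(G)$, and hence $y$ is not adjacent to $x$ in $\Gamma(G)$.

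Part (2) is immediate from the identification $I_l(G)=F(G)$ recalled in the introduction (see \cite[12.3.7]{rob}). If $y\in F(G)$, then $y$ is a left Engel element, which by definition of $I_{l,n}(G)$ means that for every $x\in G$ there exists $n$ with $[x,{}_n y]=1$; this is precisely the statement that $x\mapsto y$ in $\Lambda(G)$ for all $x$.

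For (3), I would compute the commutator directly. Since $y\in N_G(\langle x\rangle)$, conjugation by $y$ preserves $\langle x\rangle$, so $(x^{-1})^{y}=y^{-1}x^{-1}y\in\langle x\rangle$; hence $[y,x]=y^{-1}x^{-1}yx=(x^{-1})^{y}\,x\in\langle x\rangle$. As $[y,x]$ is then a power of $x$, it commutes with $x$, giving $[y,{}_2 x]=[[y,x],x]=1$, i.e. $y\mapsto_2 x$.

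None of the three parts presents a genuine obstacle: (2) is a citation and (3) is a one-line computation. The only step requiring an idea is the periodicity argument in (1), and the delicate point there is to exclude a trivial term in both regimes — before index $n$ and along the periodic cycle from index $n$ onward — which is exactly where the identity $\phi(1)=1$ is used.
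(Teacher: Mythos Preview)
Your proof is correct and follows essentially the same approach as the paper's. For (1) both arguments use that the Engel sequence becomes periodic of period $m-n$ from index $n$ and that the map $g\mapsto[g,x]$ fixes $1$, forcing a contradiction if any term vanishes; (2) and (3) match the paper's one-line justifications almost verbatim.
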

\begin{proof}
\begin{enumerate}
\item Assume $1 \neq [y,_n x] = [y,_m x]$ for some  $0\le n <m$ and let $z=[y,_n x]$. Then $z=[z,_{m-n} x]$ and so $z=[z,_{l(m-n)} x]$ for every $l \ge 0$. If $[y,_N x]=1$, then $1= [y,_{N} x, _{N(m-n-1)} x ]=[y,_{N(m-n)} x] =z,$ a contradiction.
\item It follows from the fact that the Fitting subgroup $F=F(G)$ coincides with the set of the left Engel elements of $G$.
\item If $y \in N_G(\langle x \rangle)$, then $[y,x] \in \langle x \rangle$ and it commutes with $x$, so $[y,_2 x]=1$.\qedhere
\end{enumerate}
\end{proof}

The proof of Theorem \ref{wcon}  relies on  
the main result of \cite{bgl}, which states that the soluble graph of an insoluble group is connected and its diameter is at most $5$. 

\begin{proof}[Proof of Theorem \ref{wcon}] By Lemma \ref{reduce}, we may assume that $Z_\infty(G)=1.$ If the soluble radical $R(G)$ is non-trivial, then $F(G)\neq 1$ and $\Gamma(G)$ is weakly connected by Lemma \ref{trivial} (2). So we may assume $R(G)=1.$ Let $x_1, x_2$ be two distinct non-identical elements of $G$. If $\langle x_1, x_2 \rangle$ is soluble then
	there exists $1\neq f \in F(\langle x_1, x_2 \rangle)$ and $x_1 \mapsto f, x_2 \mapsto f.$ 
	Since the soluble graph of $G$ is connected with diameter at most $5$ (see \cite{bgl}), we deduce that 
	$\Gamma(G)$ is weakly connected and its undirected diameter is at most $10$, as claimed.
\end{proof}

Note that if $\Omega$ is a strong component of $\Gamma(G)$ and a non-hypercentral element $x\in G$ commutes with some element of $\Omega$, then $x \in \Omega$. Thus the following result will be relevant for our proofs. 
\begin{theorem}\label{Thompson}\cite[Theorem 10.2.1]{gor} If a finite group $G$ admits a fixed-point-free automorphism of prime order,
then $G$ is nilpotent.
\end{theorem}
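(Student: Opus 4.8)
The plan is to prove, by induction on $|G|$, that a finite group $G$ carrying a fixed-point-free automorphism $\phi$ of prime order $p$ is nilpotent. First I would record the preliminary reductions. The action is coprime: if $p$ divided $|G|$, then $\langle\phi\rangle$ would normalise some Sylow $p$-subgroup $P$ of $G$, and a nontrivial $p$-element acting on the nontrivial $p$-group $P$ has $C_P(\phi)\neq 1$, contradicting fixed-point-freeness; hence $p\nmid|G|$. Because $\phi$ has prime order, fixed-point-freeness of $\phi$ is equivalent to that of every nonidentity power, so $\Gamma=G\rtimes\langle\phi\rangle$ is a Frobenius group with kernel $G$ and complement $\langle\phi\rangle$, and the theorem is exactly the nilpotency of this Frobenius kernel. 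The basic tool is that the map $g\mapsto g^{-1}g^{\phi}$ is a bijection of $G$: injectivity is immediate from $C_G(\phi)=1$ and surjectivity from finiteness, so every element of $G$ is of the form $g^{-1}g^{\phi}$. I would also note that $\phi$ restricts to a fixed-point-free automorphism of every $\phi$-invariant subgroup and induces one on every $\phi$-invariant quotient, so by induction every proper $\phi$-invariant section of $G$ is nilpotent.

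Assuming $G$ soluble, I would proceed as follows. A subdirect-product argument shows that $G$ has a unique minimal $\phi$-invariant normal subgroup $N$: if $N_1\neq N_2$ were two of them, then $G$ would embed into $(G/N_1)\times(G/N_2)$, a product of nilpotent groups by induction, forcing $G$ nilpotent. Viewing $N$ as a minimal normal subgroup of the soluble group $\Gamma$ contained in $G$, it is an elementary abelian $q$-group, and $G/N$ is nilpotent. If $N$ were central then $G/Z(G)$ would be nilpotent and hence so would $G$; thus $C_G(N)\neq G$, and one is reduced to the case where $F(G)=N$ is a self-centralising $q$-group. Since $G\neq N$ is not a $q$-group, some prime $r\neq q$ divides $|G|$, and coprime action supplies a nontrivial $\phi$-invariant Sylow $r$-subgroup $R$ acting faithfully and coprimely on $N$; the goal becomes to exhibit a nonidentity element of $C_G(\phi)$ and so contradict $R\neq 1$. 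For $p=2$ the bijection shows $\phi$ inverts every element, so $G$ is abelian; for $p=3$ a short commutator computation with elements $g^{-1}g^{\phi}$ gives nilpotency of class at most $2$. For general $p$ the contradiction requires Thompson's more delicate analysis of the fixed-point-free action on the self-centralising $q$-group $F(G)=N$, and the companion theorem of Higman then bounds the resulting nilpotency class by a function of $p$ alone.

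The step I expect to be the main obstacle is the reduction to the soluble case, which is the substance of Thompson's thesis. The natural strategy is to show that a minimal counterexample possesses a nontrivial proper $\phi$-invariant normal subgroup, after which induction closes the argument; but establishing this requires the full weight of local group-theoretic analysis (control of $\phi$-invariant Sylow normalisers and Hall--Higman-type study of the action of $\phi$ on $\phi$-invariant $q$-local subgroups), and no elementary proof is known. Given solubility, the remaining difficulty is the quantitative combinatorics underlying the Higman bound, which is delicate but self-contained. I would therefore organise the write-up in three stages: coprimeness together with the Frobenius reformulation; the inductive reduction to soluble groups (the hard part); and finally the soluble case, closed by the Fitting-subgroup induction above.
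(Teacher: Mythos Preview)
The paper does not prove this statement at all: it is quoted verbatim from Gorenstein's book as \cite[Theorem 10.2.1]{gor} and used only as a black box (in the proof of Lemma~\ref{norma}, to guarantee that an element of prime order centralises something nontrivial in a non-nilpotent normal subgroup). There is therefore no proof in the paper against which to compare your proposal.

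As a sketch of the actual theorem your outline is broadly on target, and you correctly isolate the reduction to the soluble case (Thompson's thesis) as the deep step. Two minor comments on the soluble part of your sketch. First, the jump ``thus $C_G(N)\neq G$, and one is reduced to the case where $F(G)=N$ is a self-centralising $q$-group'' elides several steps: from $N\not\le Z(G)$ one does not immediately get $C_G(N)=N$ or $F(G)=N$; one has to argue further with the minimal-counterexample hypothesis (for instance, $C_G(N)$ is a proper $\phi$-invariant normal subgroup, hence nilpotent by induction, and then one controls $F(G)$). Second, Higman's bound on the nilpotency class is a separate refinement and is \emph{not} needed to establish nilpotency in the soluble case; a direct Clifford-theoretic argument on the action of a $\phi$-invariant $r$-subgroup on the minimal normal $q$-subgroup already produces the required fixed point and closes the induction without any quantitative class bound.
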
 

We will also need the following result concerning Frobenius complements.

\begin{lemma}\cite[Lemma 2.1]{mp}\label{frob}
Suppose $X$ is a Frobenius complement. Then every Sylow subgroup of $X$ is cyclic or generalized quaternion. If $X$ has odd order then any two elements of prime order commute and $X$ is metacyclic. If $X$ has even order, then $X$ contains a unique involution.
\end{lemma}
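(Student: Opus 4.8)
The plan is to work with the defining action rather than with $X$ in the abstract. Realizing $X$ as the complement in a Frobenius group $G=K\rtimes X$, the complement acts on the kernel $K\neq 1$ so that $C_K(x)=1$ for every $1\neq x\in X$ (a fixed-point-free, i.e. semiregular, action), and this action is coprime: since each $\langle x\rangle$ has all its orbits on $K\setminus\{1\}$ of length $|x|$, the order $|x|$ divides $|K|-1$, whence $\gcd(|X|,|K|)=1$. The whole argument rests on one structural fact, namely that every subgroup of $X$ of order $pq$ (with $p,q$ primes, possibly equal) is cyclic; once this is in hand the three assertions follow from standard $p$-group and $Z$-group theory. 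To analyse the action I would pass to a suitable $X$-chief factor of $K$ and, after extending scalars, view it as a faithful coprime representation $V$ over an algebraically closed field of characteristic prime to $|X|$, on which every nonidentity element of the relevant subgroup still acts without the eigenvalue $1$.

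First I would rule out $X$ containing $E\cong C_p\times C_p$. On the constituent $V$ above, the abelian group $E$ acts diagonalisably, so $V$ decomposes into one-dimensional $E$-eigenspaces affording linear characters $\chi\colon E\to\ol{\F}^{\times}$. A nontrivial such $\chi$ has kernel of order $p$, and any $1\neq e\in\ker\chi$ then fixes the corresponding line, contradicting fixed-point-freeness; the trivial character is excluded likewise. Hence $V=0$, so $E$ cannot act and no such $E$ exists. Consequently every Sylow $p$-subgroup of $X$ has a unique subgroup of order $p$, and by the classical classification a $p$-group with this property is cyclic (for $p$ odd) or cyclic or generalized quaternion (for $p=2$). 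This gives the first assertion.

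For the even-order case I would exploit that a single involution already constrains $K$ heavily. If $t\in X$ is an involution, then $t$ is a fixed-point-free automorphism of $K$ of order $2$; the map $k\mapsto k^{-1}k^{t}$ is then a bijection of $K$, from which $k^{t}=k^{-1}$ for all $k$, so $t$ inverts $K$ and $K$ is abelian. If $t_1,t_2$ are two involutions, both invert $K$, so $t_1t_2$ centralizes $K$; by fixed-point-freeness this forces $t_1t_2=1$, i.e. $t_1=t_2$. Hence $X$ has a unique involution, which is the third assertion. (Alternatively the unique involution of each cyclic or generalized quaternion Sylow $2$-subgroup fuses to a single class, but the inversion argument is cleaner.)

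It remains to treat the odd-order case, which is where the main work lies. I first complete the structural fact by excluding a nonabelian subgroup $H=\langle a\rangle\rtimes\langle b\rangle$ of order $pq$ with $|a|=q$, $|b|=p$ and $b$ acting nontrivially. On the constituent $V$, the element $a$ acts with eigenvalues among the nontrivial $q$-th roots of unity, and $b$ permutes the corresponding eigenspaces in orbits of length $p$; restricting to the span $W$ of one such orbit, the order-$p$ element $b$ acts as a cyclic shift of $p$ blocks with $b^{p}=1$, so its eigenvalues on $W$ are exactly the $p$-th roots of unity, including $1$, contradicting fixed-point-freeness. Thus every subgroup of order $pq$ is cyclic. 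In particular all Sylow subgroups of $X$ are cyclic (no generalized quaternion in odd order), so $X$ is a $Z$-group and hence metacyclic by Zassenhaus's theorem. Finally, for $u,v\in X$ of prime orders, the subgroup $\langle u,v\rangle$ is again a Frobenius complement of odd order, hence a $Z$-group in which every $pq$-subgroup is cyclic; a nonabelian $Z$-group would contain a nonabelian subgroup of order $pq$ arising from a prime acting nontrivially on a cyclic Sylow factor, so $\langle u,v\rangle$ is abelian with cyclic Sylow subgroups, thus cyclic, and therefore $[u,v]=1$. I expect the eigenspace-permutation step ruling out the nonabelian $pq$-subgroups, together with the reduction to a faithful coprime linear action on a chief factor, to be the main obstacle.
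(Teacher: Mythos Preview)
The paper does not prove this lemma at all: it is quoted from \cite[Lemma~2.1]{mp} and used as a black box throughout. Your proposal, by contrast, supplies a correct self-contained argument along the classical lines: reduce to a fixed-point-free coprime linear representation on a section of the Frobenius kernel, rule out $C_p\times C_p$ and nonabelian $pq$-subgroups by eigenspace considerations over the algebraic closure, and then invoke the classification of $p$-groups with a unique subgroup of order $p$, the $Z$-group/Zassenhaus theorem, and the inversion argument for fixed-point-free involutions. One small point of precision: rather than an ``$X$-chief factor of $K$'', what your argument actually needs is a minimal $E$-invariant (respectively $H$-invariant) subgroup $V\le K$; the fixed-point-free property passes to $V$ immediately from $C_V(e)\le C_K(e)=1$, so no quotient or coprime-centralizer lemma is required, and faithfulness is automatic. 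With that adjustment every step goes through as you describe.
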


\section{Proof of Theorem \ref{mainth}}\label{sec:3}

\begin{lemma}\label{frob-graph}
	If $G=K\rtimes H$  is a Frobenius group with kernel $K$ and complement $H$, then $\Gamma(G)$ is not strongly connected. In particular for every nontrivial $k \in K$ there is no element $g \in  G \setminus K$ with $k \mapsto g$.
\end{lemma}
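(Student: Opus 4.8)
The plan is to prove something slightly stronger than the failure of strong connectivity, from which both assertions follow at once: I will show that \emph{no} directed edge leaves the set $K\setminus\{1\}$ toward $G\setminus K$. First I would record that a nontrivial Frobenius group has $Z_\infty(G)=1$: its center is trivial, since a nontrivial normal subgroup sitting inside $Z(G)$ would have to lie in $C_K(h)=1$ for any $1\neq h\in H$, and triviality of the center then propagates up the upper central series. Consequently every non-identity element of $G$ is a vertex of $\Gamma(G)$; in particular each $k\in K\setminus\{1\}$ and each $h\in H\setminus\{1\}$ is a vertex.

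For the key computation, fix $k\in K\setminus\{1\}$ and $g\in G\setminus K$, and suppose for contradiction that $k\mapsto g$, i.e.\ $[k,{}_n g]=1$ for some $n$. Since $K\nn G$ we have $[k,{}_1 g]=k^{-1}k^g\in K$ and, inductively, $[k,{}_j g]\in K$ for all $j\geq 1$. Choose $n$ minimal with $[k,{}_n g]=1$; note $n\geq 1$ because $[k,{}_0 g]=k\neq 1$. Put $z=[k,{}_{n-1}g]$. By minimality $z\neq 1$, while $z\in K$ (either $z=k$ when $n=1$, or $z=[k,{}_{n-1}g]\in K$ when $n\geq 2$), and $[z,g]=[k,{}_n g]=1$, so $1\neq z\in C_K(g)$.

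It then remains to check that $C_K(g)=1$ for every $g\notin K$. Here I would invoke the Frobenius structure: since $g\notin K$, it is conjugate to a nontrivial element of $H$, say $g=h^{x}$ with $1\neq h\in H$ and $x\in G$. For $z\in K$ the relation $z^g=z$ rewrites, after conjugating by $x^{-1}$, as $(z^{x^{-1}})^h=z^{x^{-1}}$ with $z^{x^{-1}}\in K$; but the action of $h$ on $K$ is fixed-point-free, so $C_K(h)=1$, forcing $z^{x^{-1}}=1$ and hence $z=1$. This contradicts $z\neq 1$, and so there is no edge $k\mapsto g$ with $k\in K\setminus\{1\}$ and $g\in G\setminus K$, which is exactly the ``in particular'' assertion.

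Finally, strong connectivity fails: any directed path $k\twoheadrightarrow\cdots$ starting at some $k\in K\setminus\{1\}$ can only use edges emanating from vertices of $K\setminus\{1\}$, and by the above each such edge stays inside $K$; hence every vertex along the path lies in $K$, and no directed path can reach an element $h\in H\setminus\{1\}\subseteq G\setminus K$. Since such $h$ is a vertex, $\Gamma(G)$ is not strongly connected. The only genuinely Frobenius-specific input is the identity $C_K(g)=1$ for $g\notin K$, and I expect that to be the one step requiring care; the rest is the routine commutator bookkeeping showing the iterated Engel word $[k,{}_n g]$ remains trapped in $K$.
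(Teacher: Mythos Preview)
Your proof is correct and follows essentially the same route as the paper's: both arguments show that the iterated commutator $[k,{}_j g]$ stays in $K$ by normality, then invoke the Frobenius property $C_K(g)=1$ for $g\notin K$ to force $z=[k,{}_{n-1}g]=1$, contradicting the minimality of $n$. Your version is slightly more explicit (you prove $C_K(g)=1$ directly by conjugating into $H$, whereas the paper replaces $g$ by an $H$-conjugate at the outset) and you add the observation $Z_\infty(G)=1$ to confirm the vertex set, but the core idea is identical.
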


\begin{proof}
	Let $k$ be a nontrivial element of $K$ and assume that there exists $g \in G\setminus K$ with $k \mapsto g$. Thus $[k, _n g]=1$ for some integer $n.$ 
	Take $g\in G\setminus K$ such that $n$ is minimal. 
	Since $g \in G\setminus K = \cup_{k \in K} H^k$, without loss of generality we can assume that $g$ belongs to  $H$. So $z=[k, _{n-1} g]$ belongs to $K$, since $K$ is normal, and it belongs to $H$, since it centralizes $g \in H$ and $G$ is Frobenius. Therefore $z=1$, a contradiction to the minimality of $n$. 
\end{proof}

For the remaining part of this section our aim is to prove that if $G$ is neither Frobenius nor almost simple, then $\Gamma(G)$ is strongly connected.
In the following we will denote by $\Delta(G)$ the subgraph of $\Lambda(G)$ induced by the non-trivial elements of $G.$ Clearly $\Gamma(G)=\Delta(G)$ if $Z_\infty(G)=1.$

\begin{lemma}\label{norma} Let $N$ be a non-nilpotent normal subgroup of a finite group $G$. If $\Delta(N)$ is strongly connected, 
 or, more generally, if $N\setminus\{1\}$ is contained in a strong component of $\Delta(G)$, 
then $\Delta(G)$ is strongly connected.
\end{lemma}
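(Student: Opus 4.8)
The plan is to show that the single strong component $\Omega$ of $\Delta(G)$ containing $N\setminus\{1\}$ is in fact all of $G\setminus\{1\}$. First I would dispose of the reduction built into the statement: if $\Delta(N)$ is strongly connected, then since $N\le G$ the value of each Engel word $[x,{}_n y]$ is the same whether computed in $N$ or in $G$, so every edge of $\Delta(N)$ is an edge of $\Delta(G)$; hence $N\setminus\{1\}$ already lies in a single strong component of $\Delta(G)$, and it suffices to treat the ``more general'' hypothesis. Throughout I will repeatedly use the elementary fact that if $a,b\in G\setminus\{1\}$ commute then both $a\mapsto b$ and $b\mapsto a$ (take $n=1$), so that two commuting nontrivial elements always lie in the same strong component.

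Fix $g\in G\setminus\{1\}$; the goal is to produce an element of $\Omega$ that commutes with $g$, which then forces $g\in\Omega$. The device is to pass to a prime-order power of $g$. Choose a prime $p$ dividing $|g|$ and set $s=g^{|g|/p}$, an element of order $p$ lying in $\langle g\rangle$. Since $N\nn G$, conjugation by $s$ induces an automorphism of $N$ of order dividing $p$. The key step is to argue that $C_N(s)\neq 1$: if $s$ centralizes $N$ this is immediate, and otherwise the induced automorphism has prime order exactly $p$, so were it fixed-point-free, Theorem~\ref{Thompson} would force $N$ to be nilpotent, contrary to hypothesis. Hence in all cases there is a nontrivial $x\in C_N(s)$.

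It remains to chain the commuting relations. The element $x$ lies in $N\setminus\{1\}\subseteq\Omega$ and commutes with $s$, so $s$ is mutually adjacent to $x$ and therefore $s\in\Omega$. On the other hand $s$ is a power of $g$, so $s$ commutes with $g$; as $s\neq 1$ and $s\in\Omega$, this yields $g\in\Omega$. Since $g$ was arbitrary, $\Omega=G\setminus\{1\}$ and $\Delta(G)$ is strongly connected.

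I expect the only real subtlety — the main obstacle — to be the fixed-point-free case, i.e.\ guaranteeing $C_N(s)\neq1$. This is exactly where the non-nilpotency of $N$ must be spent, via Thompson's theorem on fixed-point-free automorphisms of prime order. Note that passing to a prime-order power is essential rather than cosmetic: $g$ itself may well act fixed-point-freely on $N$ (Lemma~\ref{frob-graph} shows this genuinely occurs inside Frobenius groups), and Thompson's theorem applies only to automorphisms of prime order, so one cannot invoke it for $g$ directly.
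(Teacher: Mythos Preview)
Your proof is correct and follows essentially the same approach as the paper: pass to a prime-order power of the given element, invoke Theorem~\ref{Thompson} (together with the non-nilpotency of $N$) to find a nontrivial centralized element in $N$, and then use the resulting commuting chain to pull the original element into the strong component $\Omega$. Your write-up is somewhat more explicit about the reduction step and the case distinction in applying Thompson's theorem, but the argument is the same.
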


\begin{proof}
Let  $\Omega$ be the strong component of $\Delta(G)$ containing $N \setminus \{1\}.$
Let $x$ be a non-trivial element of $G$ and let $\tilde x$ be an element of prime order in $\langle x \rangle$. Since $N$ is not nilpotent,
 it follows from Theorem \ref{Thompson} that 
 there exists $y \in N \setminus \{1\}$  such that $[y, \tilde x]=1.$ Clearly $x$, $\tilde x$ and $y $ form a clique in the commuting graph of $G$, and consequently  $x \in \Omega.$
\end{proof}

\begin{lemma}\label{prodo}Suppose that $N=G_1G_2$ is the central product of two non-trivial finite groups. Then $\Delta(N)$ is strongly connected and
	$\diam(N)\leq 3.$
\end{lemma}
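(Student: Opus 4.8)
The plan is to exploit the defining feature of a central product, namely that $[G_1,G_2]=1$, so that every element of $G_1$ commutes with every element of $G_2$. The central observation is that \emph{each non-trivial element of $N$ commutes with a non-trivial element of $G_1$ and with a non-trivial element of $G_2$}. Indeed, write an arbitrary $x\in N$ as $x=ab$ with $a\in G_1$ and $b\in G_2$. Since $a$ commutes with $b$ (central product) and with itself, $a$ commutes with $ab=x$; symmetrically $b$ commutes with $x$. As $x\neq 1$ forces $a\neq 1$ or $b\neq 1$, at least one of these gives a non-trivial commuting partner. Moreover, when $a=1$ (so $x=b\in G_2$) the element $x$ commutes with \emph{every} element of $G_1$, and dually when $b=1$.

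From this I would deduce that every non-trivial $x\in N$ has a neighbour in $G_1\setminus\{1\}$ and a neighbour in $G_2\setminus\{1\}$ in the commuting graph, hence (since commuting edges run in both directions of $\Lambda(N)$) both an out-neighbour and an in-neighbour there. Concretely: if $a\neq 1$ take $a\in G_1\setminus\{1\}$, and if $a=1$ take any element of the non-empty set $G_1\setminus\{1\}$, which still commutes with $x$; similarly for $G_2$. Now fix any non-trivial $x,y\in N$. Choosing $p\in G_1\setminus\{1\}$ commuting with $x$ and $q\in G_2\setminus\{1\}$ commuting with $y$, the edges $x\mapsto p$ and $q\mapsto y$ hold (both with $n=1$), while $p\mapsto q$ holds because $p\in G_1$ and $q\in G_2$ commute. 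Thus $x\mapsto p\mapsto q\mapsto y$ is a directed path of length $3$, and applying the same construction to the pair $(y,x)$ shows that $\Delta(N)$ is strongly connected with $\diam(\Delta(N))\leq 3$.

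There is no serious obstacle in this argument; the only points requiring care are (i) checking that the commutators run in the right direction, which is automatic because every commuting pair yields edges $u\mapsto v$ and $v\mapsto u$ in $\Lambda(N)$, and (ii) the degenerate cases where one of the components $a,b$ is trivial, which are handled by using the non-emptiness of $G_1\setminus\{1\}$ and $G_2\setminus\{1\}$ guaranteed by the hypothesis that $G_1$ and $G_2$ are non-trivial. It is worth noting that the bound $3$ is genuinely sharper than the naive routing $x\mapsto h_x\twoheadrightarrow h_y\mapsto y$ through a common hub, which would only give $4$; the improvement comes from the freedom to land $x$ on a vertex of $G_1$ and $y$ on a vertex of $G_2$, between which a single edge always suffices.
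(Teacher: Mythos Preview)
Your proof is correct and follows essentially the same approach as the paper: route from $x$ to $y$ via a non-trivial element of $G_1$ and then a non-trivial element of $G_2$, using only commuting (hence bidirectional) edges. The paper carries this out by explicit case analysis on which components of $g_1=x_1y_1$ and $g_2=x_2y_2$ are trivial, while you package the same idea more uniformly by first observing that every non-trivial element of $N$ has a commuting partner in $G_1\setminus\{1\}$ and in $G_2\setminus\{1\}$.
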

\begin{proof}Let $g_1=x_1y_1, g_2=x_2y_2$ be two non trivial elements of $N$ with	$x_1, x_2 \in G_1, y_1, y_2 \in G_2.$	We may assume $x_1\neq 1.$	If $y_2\neq 1$, then $g_1=x_1y_1, x_1, y_2, x_2y_2=g_2$ is a path in $\Delta(N).$	Suppose $y_2=1$. This implies $x_2\neq 1.$ If $y_1\neq 1,$ then $g_1=x_1y_1, y_1, x_2, x_2y_2=g_2$ is a path in $\Delta(N)^{}.$ We remain with the case $y_1=y_2=1.$ Then, let $1\neq z \in G_2.$ In this case
 $g_1=x_1, z, x_2=g_2$
  is a path in $\Delta(N).$ \end{proof}

In the following we will denote by  $F^*(G)$ the generalized Fitting subgroup of $G$.

\begin{lemma}\label{F*}
 If  $G$ is not an almost simple group and $F^*(G) > F(G)$, then $\Delta(G)$ is strongly connected.
\end{lemma}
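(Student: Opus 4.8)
**Setting up the structure of $F^*(G)$.**

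The plan is to exploit the structure theory of the generalized Fitting subgroup. Recall that $F^*(G) = F(G)E(G)$, where $E(G)$ is the layer, the central product of the components (the subnormal quasisimple subgroups) of $G$. The hypothesis $F^*(G) > F(G)$ means precisely that $E(G) \neq 1$, so $G$ has at least one component. My strategy is to locate inside $F^*(G)$ a nontrivial normal subgroup $N$ of $G$ that is itself a nontrivial central product, so that Lemma \ref{prodo} shows $\Delta(N)$ is strongly connected, and then invoke Lemma \ref{norma} to conclude that $\Delta(G)$ is strongly connected. The key point is that such an $N$ is non-nilpotent (it contains a quasisimple factor), so Lemma \ref{norma} applies.

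\textbf{The two cases.}
First I would dispose of the easy situation. If $E(G)$ is itself a nontrivial central product of two nontrivial groups, then taking $N = E(G)$ (which is normal in $G$) immediately gives that $\Delta(N)$ is strongly connected by Lemma \ref{prodo}, and Lemma \ref{norma} finishes the argument. This covers the case where $G$ has at least two components, or where $F(G)$ is nontrivial and centralizes $E(G)$ suitably. The remaining delicate case is when $E(G) = Q$ is a single component, i.e. $Q$ is quasisimple and normal in $G$. Here I would split according to whether $F(G) = 1$ or $F(G) \neq 1$. If there is a prime $p$ and a nontrivial element $z$ in $Z(F^*(G)) \cap F(G)$ that does not lie in $Q$, or more simply if $F(G)$ contains a nontrivial element outside the center of $Q$, I can form the central product $N = Q \cdot \langle z\rangle$ (using that $F(G)$ centralizes $E(G)$, a standard property of $F^*(G)$), which is again a nontrivial central product, and Lemma \ref{prodo} applies.

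\textbf{The genuinely single-component case.}
The real obstacle is when $F^*(G) = Q$ is a single quasisimple group with $F(G) = Z(Q)$, so that $F^*(G)$ is \emph{not} expressible as a nontrivial central product and Lemma \ref{prodo} does not directly apply. Since $G$ is assumed not almost simple, we cannot have $F^*(G)$ equal to a nonabelian simple group with $C_G(F^*(G)) = 1$; the non-almost-simple hypothesis must be used here to produce extra structure. I expect the argument to proceed by analyzing $C_G(Q)$: because $F^*(G)$ is self-centralizing in the sense that $C_G(F^*(G)) \le F^*(G)$, and because $G$ is not almost simple, one shows $Q$ has a nontrivial center $Z(Q) \ne 1$, whence $Q$ itself decomposes usefully, or one shows that the full preimage structure forces a genuine central product after all. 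The technical heart, and the step I anticipate being the main difficulty, is verifying that $Z(Q) \neq 1$ in this configuration and then checking directly that $\Delta(Q)$ is strongly connected for a quasisimple group with nontrivial center — one would pick elements of prime order, use Theorem \ref{Thompson} to find common centralizing elements in $Q$, and route paths through central elements of $Q$ as in the proof of Lemma \ref{prodo}. Once $\Delta(Q)$ is shown strongly connected, Lemma \ref{norma} with $N = Q$ completes the proof.
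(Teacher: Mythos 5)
Your proposal is correct in substance and follows essentially the same route as the paper: decompose $F^*(G)=E(G)F(G)$ as a central product, apply Lemma~\ref{prodo} and then Lemma~\ref{norma}, with the non-almost-simple hypothesis ruling out the case where $F^*(G)$ is a non-abelian simple group. Two small points need repair. First, in your second case the subgroup $N=Q\langle z\rangle$ with $z\in F(G)$ need not be normal in $G$, so Lemma~\ref{norma} does not apply to it as stated; the fix is to take $N=QF(G)=F^*(G)$ instead (still a central product of the two non-trivial groups $E(G)$ and $F(G)$, and normal in $G$), or to observe that $Q\setminus\{1\}$ then lies in one strong component of $\Delta(G)$ and invoke the ``more generally'' clause of Lemma~\ref{norma} with $N=Q$. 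Second, your ``genuinely single-component case'' is not a real obstacle and needs neither Theorem~\ref{Thompson} nor any analysis of elements of prime order: once the non-almost-simple hypothesis forces $Z(Q)\neq 1$, the group $Q=Q\cdot Z(Q)$ is itself a central product of two non-trivial groups, so Lemma~\ref{prodo} applies verbatim (equivalently, any two non-trivial elements of $Q$ are joined by a path of length two through a non-trivial central element). With these adjustments your argument collapses to the paper's one-line proof: either $F^*(G)$ is a non-abelian simple group, forcing $G$ to be almost simple, or $F^*(G)$ is a central product of two non-trivial groups and Lemmas~\ref{prodo} and~\ref{norma} finish.
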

\begin{proof} Recall that $F^*(G)=E(G)F(G)$ is the central product of the Fitting subgroup of $G$ with the layer $E(G)$  of $G$. Since $E(G)$ is the central product of the components, either $F^*(G)$ is a non-abelian simple group or $\Delta(F^*(G))$ is strongly connected by Lemma \ref{prodo}. In the first case $G$ is almost simple, in the second $\Delta(G)$ is strongly connected by Lemma \ref{norma}.
\end{proof}

In the following, given two vertices $x, y$ of the graph $\Gamma(G),$  we will denote by $d(x,y)$ the direct distance from $x$ to $y$ in  $\Gamma(G).$

\begin{lemma}\label{normalizer}Let $x, y$ be two distinct non-trivial elements of a finite group $G$. If $Z_\infty(G)=1$, $F(G)=F^*(G)$ and $G$ is not a Frobenius group, then the following hold:
\begin{enumerate}
\item If
$d(x,y) >3$, then $N_G( \langle y \rangle)F$ (and consequently $C_G( \langle y \rangle)F$) is a Frobenius group. 
\item  If $d(x,y) >4$ and $y_r$ is a power of $y$ of order a prime $r$, 
 then $C_G(y_r)$ has odd order and it is metacyclic. 
\end{enumerate}
\end{lemma}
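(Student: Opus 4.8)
The plan is to prove both statements by showing that, whenever the asserted Frobenius structure \emph{fails}, there is a short directed path ending at $y$ (or at $y_r$); the distance hypotheses then force that structure. The recurring tool is that every element of $F=F(G)$ is a ``sink'': by Lemma \ref{trivial}(2) one has $x\mapsto f$ for all $x\in G$ and all $f\in F$, so the first edge of any path from $x$ through $F$ is free. For part (1) I would argue by contraposition. Write $N=N_G(\gen y)$ and suppose $NF$ is not Frobenius. If some nontrivial $g\in N$ centralises some nontrivial $f\in F$, then $x\mapsto f\mapsto g\mapsto_2 y$ is a directed path of length $\le 3$ (the last edge by Lemma \ref{trivial}(3), since $g\in N_G(\gen y)$), so $d(x,y)\le 3$. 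Hence $d(x,y)>3$ forces $C_F(g)=1$ for every $1\ne g\in N$; in particular $N\cap F=1$ (a nontrivial $g\in N\cap F$ would lie in $C_F(g)$), so $N$ acts fixed-point-freely on $F$ and $NF=F\rtimes N$ is Frobenius with kernel $F$. Here $F\ne 1$ since $F=F^*(G)\ne 1$. Because $C_G(\gen y)\le N$ and $C_G(\gen y)\cap F=1$, the subgroup $C_G(\gen y)F$ is Frobenius as well. A few degenerate coincidences ($x=f$, $f=g$, $g=y$) only shorten the path and are harmless.

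For part (2) I would first reduce to a statement about $C_G(y_r)$. Since $y_r\mapsto_1 y$ and $y\mapsto_1 y_r$ (commuting powers), we have $d(x,y)\le d(x,y_r)+1$, so $d(x,y)>4$ gives $d(x,y_r)>3$. Applying part (1) to the pair $(x,y_r)$ makes $C_G(y_r)F$ Frobenius with kernel $F$, so $C_G(y_r)$ is a Frobenius complement. By Lemma \ref{frob} it then suffices to prove that $C_G(y_r)$ has odd order, as metacyclicity follows. Assume instead it has even order. By Lemma \ref{frob} it has a unique involution $t$, which is therefore central in $C_G(y_r)$; since $y\in C_G(y_r)$ we get $[t,y]=1$, hence $t\mapsto y$. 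Moreover $d(x,y)>3$ and part (1) make $C_G(y)F$ Frobenius, and $t$ is a nontrivial element of its complement $C_G(y)$, so $t$ acts fixed-point-freely on $F$; thus $F$ is abelian of odd order and $t$ inverts $F$.

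It remains to contradict $d(x,y)>4$ by producing a path of length $\le 4$ into $y$ through $t$. The template is always $x\mapsto f\mapsto g\mapsto t\mapsto y$: the first edge is free, the last is $t\mapsto y$, and I only need a nontrivial $g$ reaching $t$ in one edge together with some $1\ne f\in C_F(g)$ (giving $f\mapsto g$). If a Sylow $2$-subgroup $P\ge\gen t$ does \emph{not} act fixed-point-freely on $F$, then any nontrivial $2$-element $g\in P$ with $C_F(g)\ne 1$ works, because $g\mapsto t$ inside the nilpotent group $P$. \textbf{The main obstacle is the complementary case}, in which $P$, and hence every $2$-element of $G$, acts fixed-point-freely on $F$. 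Here I would argue structurally: every involution then inverts the abelian group $F$, so the product of any two involutions centralises $F$ and thus lies in $C_G(F)=F$ (using $F=F^*(G)$ abelian). Consequently all involutions of $G$ lie in the single coset $Ft$, they form one conjugacy class of size $|F|$, whence $|C_G(t)|=|G:F|$ with $C_G(t)\cap F=1$, forcing $G=F\rtimes C_G(t)$. Since $G$ is not Frobenius, $C_G(t)$ cannot act fixed-point-freely on $F$, so there is $1\ne c\in C_G(t)$ with $C_F(c)\ne 1$; then $x\mapsto f\mapsto c\mapsto t\mapsto y$ with $1\ne f\in C_F(c)$ is the required path. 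In both cases $d(x,y)\le 4$, a contradiction, so $C_G(y_r)$ has odd order and is metacyclic. I expect the delicate point to be exactly this last, fixed-point-free, case: the elementary counting of involutions (rather than any heavy classification input) is what converts ``$G$ not Frobenius'' into the needed bridge element $c$.
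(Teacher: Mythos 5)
Your proof is correct. Part (1) coincides with the paper's argument, and part (2) follows the same skeleton: reduce to $d(x,y_r)>3$, deduce that $C_G(y_r)$ is a Frobenius complement, extract its unique (hence central) involution $t$ with $t\mapsto y$, note that $t$ inverts the abelian odd-order group $F$, and contradict $d(x,y)>4$ with a path of length at most $4$ ending $\cdots\mapsto t\mapsto y$. Where you genuinely diverge is in how the factorization $G=C_G(t)F$ is obtained. The paper proves it uniformly for every $g\in G$: since $t^g$ also inverts $F$, the commutator $[t,g]$ centralizes $F$, hence lies in $C_G(F^*(G))\le F=[t,F]$, giving $t^g=t^f$ and $g\in C_G(t)F$. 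You instead split into cases: when the Sylow $2$-subgroup containing $t$ has a nontrivial fixed point on $F$ you bypass the factorization entirely, routing the path through a $2$-element $g$ with $C_F(g)\ne 1$ and using $g\mapsto t$ inside a nilpotent group; only in the fixed-point-free case do you prove $G=C_G(t)F$, by locating all involutions in the coset $Ft$ and counting the conjugacy class of $t$. The two computations are the same identity ($[t,f]=f^2$, equivalently $t^f=f^{-2}t$, with squaring bijective on the odd-order group $F$) viewed from different angles, so neither gains generality; your version is slightly more elementary in the first case but longer overall. For a polished write-up you should make explicit the degenerate coincidences you already flag, and the facts $F=F^*(G)\ne 1$ and $N_G(\langle y\rangle)\ni y\ne 1$, which guarantee that the semidirect products you exhibit really are Frobenius groups.
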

\begin{proof} Assume $d(x,y)>3$. If $F(G) \cap N_G( \langle y \rangle) \neq 1$, then there exists a non trivial element $g \in F(G) \cap N_G( \langle y \rangle) $, so by Lemma \ref{trivial} (2) and (3),
\[ x  \mapsto g  \mapsto y \] 
a contradiction. 
So $F(G) \cap N_G( \langle y \rangle) = 1$.  
Assume, by contradiction, that 
 $N_G( \langle y \rangle)F(G)$ is not a Frobenius group. Then 
 there exists an element $1 \neq g \in C_{F(G)}(k)$ for some  $1 \neq k \in N_G( \langle y\rangle)$ and 
 \[ x \mapsto g \mapsto k \mapsto y\]
against $d(x,y)>3.$

 Now assume  $d(x,y)>4$. In particular, $d(x,y_r)>3$. 
 By (1), $C_G(y_r)$ is a Frobenius complement. Assume that $|C_G(y_r)|$ is even and take an involution $z \in C_G(y_r)$. As every non-trivial element of $C_G(y_r)$ acts fixed-point-freely on $F,$ $z$ is the unique involution of 
 $C_G(y_r)$. But then,  for every $g \in C_G(y_r)$, we have $z^g \in C_G(y_r)$, hence $z^g=z$. Since $z$ acts fixed-point-freely on $F(G)$, it inverts the elements of $F(G)$ and $F(G)$ is abelian. In particular $F(G)=[F(G),z]$ as a set. We claim:
$$(*)\quad G= C_G(z)F(G).$$ 
Indeed let $g \in G$. As $z^g$ acts as involution on $F(G)$, then $[z,g] $ centralizes $F$. Since $C_G(F^*(G))\leq F^*(G)=F(G),$ it follows $[z,g] \in F(G) =[z,F(G)]$. Thus there exists an element $f \in F(G)$ such that $[z,g]=[z,f]$. So $z^g=z^f$, hence $g \in C_G(z)F(G)$. 
 It follows that $G=C_G(z)F(G)$ is not a Frobenius group. Since $C_G(z) \cap F(G)=1,$  
there exists a non-trivial element $g \in C_G(z)$ such that $C_F(g) \neq 1$. Thus, for some $1 \neq f \in C_F(g)$, since $y \in C_G(y_r)$ centralizes $z$, we have 
 \[ x \mapsto f \mapsto g \mapsto z  \mapsto y\]
 a contradiction. Thus $C_G(y_r)$ is a Frobenius complement with odd order  and our conclusion follows from Lemma \ref{frob}.
\end{proof}

\begin{corollary}\label{invo}
	Assume that $Z_\infty(G)=1$ and $G$ is neither a Frobenius group nor an almost simple group. Then all the elements whose centralizer has even order belong to the same strong component of $\Gamma(G).$
\end{corollary}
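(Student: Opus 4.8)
The plan is to reduce the corollary to a statement about involutions alone. Since $Z_\infty(G)=1$ we have $\Gamma(G)=\Delta(G)$, so every non-trivial element of $G$ is a vertex; moreover, if two non-trivial elements commute then $[x,y]=1$ yields edges $x\mapsto y$ and $y\mapsto x$, placing them in a common strong component. Consequently, if $g$ is any element with $|C_G(g)|$ even, Cauchy's theorem supplies an involution $t\in C_G(g)$, and $g$ then lies in the same strong component as $t$. Thus it suffices to prove that all involutions of $G$ lie in a single strong component; the degenerate case in which $G$ has no involutions (equivalently, no centralizer has even order) makes the statement vacuous.

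First I would dispose of the case $F^*(G)>F(G)$. Here, since $G$ is not almost simple, Lemma \ref{F*} gives that $\Delta(G)=\Gamma(G)$ is strongly connected, so every pair of vertices already lies in one strong component and there is nothing more to prove. Hence I may assume $F(G)=F^*(G)$, which is precisely the hypothesis---together with $Z_\infty(G)=1$ and $G$ not Frobenius---needed to invoke Lemma \ref{normalizer}.

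The heart of the argument is then a direct application of Lemma \ref{normalizer}(2) to involutions. Let $t$ be an involution and let $x$ be any vertex with $x\neq t$. Taking $y=t$, the only prime dividing $|t|$ is $r=2$, and the corresponding prime-order power is $y_r=t$ itself; since $t\in C_G(t)$, the centralizer $C_G(t)$ has even order. By the contrapositive of Lemma \ref{normalizer}(2) this forces $d(x,t)\le 4$; in other words, every vertex reaches every involution within directed distance at most $4$. Applying this with both endpoints taken to be involutions shows that for any two involutions $s,t$ one has $s\twoheadrightarrow t$ and $t\twoheadrightarrow s$, so all involutions indeed lie in one strong component $\Omega$. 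Combined with the reduction of the first paragraph---each element with even centralizer commutes with some involution, hence lies in $\Omega$---this proves the corollary.

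I do not expect a serious obstacle, since the structural work has already been carried out in Lemmas \ref{F*} and \ref{normalizer}; the only point requiring care is the correct reading of the quantifier in Lemma \ref{normalizer}(2), namely that its conclusion concerns the specific prime-order power $y_r$, which for an involution $y=t$ is $t$ itself, so that the even order of $C_G(t)$ immediately contradicts the hypothesis $d(x,t)>4$.
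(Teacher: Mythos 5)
Your proposal is correct and follows essentially the same route as the paper: the paper's own (very terse) proof likewise reduces to showing any two involutions lie in one strong component, handles $F^*(G)>F(G)$ via Lemma \ref{F*}, and otherwise applies Lemma \ref{normalizer} with $y$ an involution so that $y_r=y$ forces $C_G(y_r)$ to have even order, contradicting $d(x,y)>4$. Your write-up simply makes explicit the details the paper leaves to the reader.
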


\begin{proof}
	It suffices to prove that if $|x|=|y|=2,$ then $x \twoheadrightarrow y.$ This follows from 
	 Lemmas \ref{F*} and  \ref{normalizer}.
\end{proof}

\begin{lemma}\label{cru} Let $X$ be a finite group. Assume that $F^*(X)=F(X)$ and that $X/F(X) \cong S^t,$ where $S$ is a finite non-abelian simple group. Then $\Delta(X)$ is strongly connected.
\end{lemma}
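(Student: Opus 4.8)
The plan is to reduce to the essential case $t=1$ and then combine the strong component containing $F\setminus\{1\}$ with the tools provided by Corollary \ref{invo} and Lemma \ref{normalizer}. First I record the structural facts. Since $F^*(X)=F(X)$ and $F^*$ is non-trivial for every non-trivial group, $F:=F(X)$ is a non-trivial nilpotent normal subgroup with $C_X(F)\le F$. Because $F$ is nilpotent, for all $x,y\in F\setminus\{1\}$ we have $[x,{}_c y]=1$ where $c$ is the nilpotency class, so $F\setminus\{1\}$ is a clique in both directions in $\Delta(X)$ and hence lies in a single strong component $\Omega$. Moreover, by Lemma \ref{trivial}(2) every non-trivial $x$ satisfies $x\mapsto f$ for each $f\in F\setminus\{1\}$, so every vertex reaches $\Omega$. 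Thus the whole problem reduces to showing that $\Omega$ reaches every non-trivial element.

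Next I would reduce to $t=1$. Writing $X/F\cong S_1\times\cdots\times S_t$, the preimage $N_i$ in $X$ of a single factor $S_i$ is a non-nilpotent normal subgroup of $X$ with $F(N_i)=F$ (as $F(N_i)$ is characteristic in $N_i$, hence normal in $X$ and contained in $F$, while $F\le F(N_i)$) and $F^*(N_i)=F(N_i)$ (since $E(N_i)\le C_{N_i}(F)\le F$ forces $E(N_i)=1$); moreover $N_i/F\cong S$. So $N_i$ satisfies the hypotheses with $t=1$, and once $\Delta(N_i)$ is known to be strongly connected, Lemma \ref{norma} yields the same for $\Delta(X)$. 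Hence I may assume $t=1$, i.e. $X/F\cong S$. Passing to $X/Z_\infty(X)$ (the hypotheses are inherited, and $Z_\infty(X)\le F$), I may also assume $Z_\infty(X)=1$, so that $\Gamma(X)=\Delta(X)$ and both Corollary \ref{invo} and Lemma \ref{normalizer} become available.

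Before using those, I must verify their hypotheses: $X$ is not almost simple, because $F^*(X)=F(X)$ is nilpotent and cannot be a non-abelian simple group; and $X$ is not a Frobenius group. For the latter, if $X=K\rtimes H$ were Frobenius then $K\le F(X)=F$, so $S=X/F$ would be a quotient of the complement $H$; since the only non-abelian composition factor of a Frobenius complement is $A_5$, this forces $S=A_5$ and $H$ to involve $\mathrm{SL}(2,5)$. But then $F/K$ is the central involution $\langle z\rangle$ of that section, whose preimage $K\langle z\rangle$ is the Frobenius group $K\rtimes\langle z\rangle$ (as $z$ acts fixed-point-freely on $K$), which is not nilpotent, contradicting that $F/K\le F(X/K)$ is nilpotent. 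Hence $X$ is neither Frobenius nor almost simple.

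Finally I finish as follows. It suffices to place every element of prime order in $\Omega$, since a general $y$ commutes with its prime-power part $y_r$ and hence lies in the component of $y_r$. Fix $y$ of prime order $r$. If $C_F(y)\ne 1$, then $y$ commutes with some $f\in F\setminus\{1\}\subseteq\Omega$, so $y\in\Omega$. The crux is the fixed-point-free case $C_F(y)=1$ (which forces $y\notin F$). Here I first use Corollary \ref{invo} to see that all elements with even-order centralizer form one strong component, and I argue that this component is $\Omega$: if $F$ has even order, a non-trivial element of $Z(F)$ is centralized by an involution of $F$; if $F$ has odd order, then not every involution can invert $F$ (otherwise any two involutions would lie in a single coset of $F$, giving $S=X/F$ a unique, necessarily central, involution, impossible for a non-abelian simple group), so some involution centralizes a non-trivial element of $F$. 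In either case an element of $F$ has even-order centralizer, whence $\Omega$ is precisely the even-centralizer component. Now suppose $y\notin\Omega$. If $N_X(\langle y\rangle)$ had even order it would contain an involution $j\in\Omega$, and $j\mapsto y$ by Lemma \ref{trivial}(3) would give $y\in\Omega$; so $N_X(\langle y\rangle)$, and a fortiori $C_X(y)$, has odd order. Together with $d(x,y)>4$ for $x\in F$ (which must hold, else $y\in\Omega$), Lemma \ref{normalizer}(2) makes $C_X(y)$ an odd, metacyclic Frobenius complement. The main obstacle is to contradict this last configuration: one must show that an odd-order element of the simple group $S=X/F$ acting fixed-point-freely on $F$ cannot have odd-order normalizer in $X$ without $X$ being Frobenius, and this is exactly the step where the detailed structure of Frobenius complements (Lemma \ref{frob}) and the simplicity of $S$ — ultimately through the classification of finite simple groups — are brought to bear.
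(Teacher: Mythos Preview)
Your outline captures the setup, but the proof is incomplete: you reach the critical configuration (an element $y$ of odd prime order with $C_F(y)=1$ and $N_X(\langle y\rangle)$ of odd order) and then stop, calling it ``the main obstacle'' and gesturing at the classification. This is exactly the step the paper resolves, and it does so \emph{without} CFSG. The key idea you are missing is a minimality trick combined with Burnside's transfer theorem. Let $p$ be the \emph{smallest} odd prime dividing $|S|$ for which some element of order $p$ lies outside $\Omega$. Then a Sylow $p$-subgroup $P$ must act fixed-point-freely on $F$ (otherwise, via the centre of $P$, every $p$-element would commute with a non-trivial element of $F\subseteq\Omega$), so $P=\langle u\rangle$ is cyclic. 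Since $S$ is simple it is not $p$-nilpotent, so by Burnside's normal $p$-complement theorem $P\not\le Z(N_S(P))$, and there exists $v\in N_S(P)\setminus C_S(P)$. The image of $v$ in $\Aut(P)$ has order prime to $p$ and dividing $p-1$, so some prime $q<p$ divides $|v|$; by the minimality of $p$ (together with the fact that all involutions are already in $\Omega$) the $q$-power of $v$ lies in $\Omega$, whence $v\in\Omega$. But $v\mapsto_2 u$ since $v$ normalizes $\langle u\rangle$, and $u\mapsto f$ for any $f\in F$, forcing $u\in\Omega$, a contradiction. No appeal to the classification is needed.

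A few smaller remarks. Your reduction to $t=1$ via the preimage $N_i$ and Lemma~\ref{norma} is valid, though the paper dispatches $t>1$ in one line inside the same argument: a Sylow $p$-subgroup of $S^t$ with $t\ge2$ is never cyclic, hence cannot act fixed-point-freely on $F$. Your passage to $X/Z_\infty(X)$ is unnecessary (and the inheritance of hypotheses is not justified): if $Z_\infty(X)\ne1$ then any non-trivial $z\in Z_\infty(X)$ satisfies both $x\mapsto z$ and $z\mapsto x$ for every $x\in X$, so $\Delta(X)$ is trivially strongly connected. Finally, your non-Frobenius argument can be shortened considerably: in a Frobenius group the kernel equals the Fitting subgroup, so $S\cong X/F$ would itself be a Frobenius complement, which is impossible since by Brauer--Suzuki no non-abelian simple group has generalized quaternion (or cyclic) Sylow $2$-subgroups; the detour through $\mathrm{SL}(2,5)$ is not needed.
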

\begin{proof}The statement is trivially true if $Z_\infty(G) \neq 1.$ So we may assume  $Z_\infty(G)=1.$
	
	It follows from the Brauer-Suzuki theorem that there is no simple group with generalized quaternion Sylow 2-subgroups. In particular, by Lemma \ref{frob}, $X/F(X)$ cannot be isomorphic to a Frobenius complement and consequently $X$ is not a Frobenius group.
By Corollary \ref{invo}, all the involutions of $X$ belong to the same strong component,  say $\Omega,$ of $X$. Moreover if $f$ is a non-trivial element of $F(X)$ and $z$ is  an involution of $X$, then $f \twoheadrightarrow z$ by Lemma \ref{normalizer} and $z\mapsto f$ by Lemma \ref{trivial}, so $F(X)\setminus \{1\} \subseteq \Omega.$

 We claim that for any odd 
 prime $p$ dividing $|S|,$
$\Omega$ contains all the elements of $X$ with order $p.$ If $t > 1,$ then a Sylow $p$-subgroup of $S^t$ cannot act fixed-point-freely on $F(X)$, so any Sylow $p$-subgroup of $X$ contains an element in $\Omega,$ and, consequently, all the elements of this Sylow subgroup belong to $\Omega$ (using the fact that the center of a Sylow subgroup is non-trivial). Assume $t=1$ and that, by contradiction, there exists no element of order $p$ in $\Omega$ and let $p$ be the smallest (odd) prime with this property. Let $P$ be a Sylow $p$-subgroup of $S$. Since $P$ acts fixed-point-freely on $F(X)$,
$P = \langle u \rangle$ is cyclic. 
By Burnside's Theorem \cite[Theorem 10.1.8]{rob}, as $S$ is not $p$-nilpotent, we have that $P\not\leq  Z(N_S(P))$, 
 so there exists an element $v \in N_S(P)\setminus C_S(P)$. In particular,  $vC_S(P)$ is a nontrivial element of $N_S(P)/C_S(P)\leq {\rm Aut}(P)$, so its order divides $\varphi(p^n)=p^{n-1}(p-1)$ and is coprime to $p$ (since $P\leq C_G(P)$). But then there is a prime $q$ dividing $(|v|,p-1)$ and so the minimality of $p$ implies that  $v$ is contained in $\Omega$. Since $v$ normalizes $\langle u \rangle$, $v \to u.$ On the other hand, $u \to f$ for any $f\in F(X),$  which means that $u \in \Omega$ and we have reached a contradiction.
\end{proof}

\begin{lemma}
Assume $Z_\infty(G)=1,$ $F(G)=F^*(G)$ and there exist $x, y \in G$  with $d(x,y)>4.$ 
Let $J/F(G)=F(G/F).$ Then 
 $|J/F(G)|$ is prime to $|F(G)|$.
\end{lemma}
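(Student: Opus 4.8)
The plan is to argue by contradiction: suppose a prime $p$ divides both $|J/F|$ and $|F|$ (write $F=F(G)$), and use the hypothesis $d(x,y)>4$ to either exhibit a directed path of length at most $4$ from $x$ to $y$ or force a structural collapse. First I would record the two normal $p$-subgroups in play. Since $F$ is nilpotent, $P:=O_p(F)=O_p(G)$ is a nontrivial normal $p$-subgroup of $G$. Since $J/F=F(G/F)$ is nilpotent and $p\mid |J/F|$, its Sylow $p$-subgroup $W/F:=O_p(G/F)$ is nontrivial and normal in $G/F$; hence $W\trianglelefteq G$, $F<W$, and $W/F$ is a $p$-group. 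The goal then reduces to contradicting $F<W$.

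Next I would bring in the only place the hypothesis $d(x,y)>4$ enters, namely through Lemma \ref{normalizer}. Since $d(x,y)>3$, that lemma (and its proof) shows $N:=N_G(\langle y\rangle)$ meets $F$ trivially and that $NF$ is a Frobenius group with kernel $F$; consequently $C_F(h)=1$ for every nontrivial $h\in N$. Fix a prime $r$ dividing $|y|$ and let $y_r\in\langle y\rangle$ have order $r$. Then $y_r\in N$, so $C_F(y_r)=1$; moreover $r\neq p$, since otherwise $y_r$ would be a nontrivial $p$-element, and then $\langle y_r\rangle P$ would be a $p$-group with $C_P(y_r)\neq1\subseteq C_F(y_r)$. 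Everything now turns on the fixed points of $y_r$ on $W$, and I would split into two cases according to whether $C_W(y_r)$ is trivial.

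If $C_W(y_r)=1$, then conjugation by $y_r$ is a fixed-point-free automorphism of $W$ of prime order $r$, so Theorem \ref{Thompson} forces $W$ to be nilpotent; being normal, $W\le F(G)=F$, contradicting $F<W$. If instead $C_W(y_r)\neq1$, then $C_W(y_r)\cap F=C_F(y_r)=1$, so $C_W(y_r)$ embeds into the $p$-group $W/F$ and is itself a nontrivial $p$-group; I would pick $c\in C_W(y_r)$ of order $p$, noting $c\notin F$. Because $P\trianglelefteq G$, the group $\langle c\rangle P$ is a $p$-group, so $C_P(c)\neq1$, and I choose $1\neq f\in C_P(c)\le F$. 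Then $x\mapsto f$ by Lemma \ref{trivial}(2), while $[f,c]=[c,y_r]=[y_r,y]=1$ give $f\mapsto c\mapsto y_r\mapsto y$; thus the chain $x\mapsto f\mapsto c\mapsto y_r\mapsto y$ yields $d(x,y)\le4$, a contradiction.

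The main obstacle is the first case: a priori $y_r$ might act fixed-point-freely on $W$, and then there is no centralizing $p$-element with which to build the short path. The idea that resolves it is that fixed-point-freeness of the prime-order element $y_r$ is exactly the hypothesis of Thompson's theorem (Theorem \ref{Thompson}), which collapses $W$ into the Fitting subgroup and so contradicts $F<W$. The other delicate point, used to make the path case work, is that $C_F(y_r)=1$ — coming from the Frobenius structure of Lemma \ref{normalizer} — forces $C_W(y_r)$ to be a $p$-group, guaranteeing a genuine $p$-element $c$ outside $F$ that centralizes $y_r$.
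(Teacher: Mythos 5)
Your proof is correct and follows essentially the same route as the paper's: your subgroup $W$ is exactly the paper's $F(G)S$ (with $S$ a Sylow $p$-subgroup of $J$), and both arguments split on whether $y_r$ acts fixed-point-freely on this normal subgroup, invoking Thompson's theorem in one case and producing a directed path of length at most $4$ in the other. The only (harmless) variation is the middle link of the path: you use $C_{O_p(G)}(c)\neq 1$ to get a genuinely commuting pair, whereas the paper uses an Engel relation inside a nilpotent Sylow subgroup meeting $F(G)$ nontrivially.
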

\begin{proof}
Let $s$ be a prime divisor of $|F(G)|$ and let $S$ be a Sylow $s$-subgroup of $J$. Take a  power of $y$ of prime order, say $y_r$. 
As $F(G)S$ is normal in $G$, either $y_r$ acts fixed-point-freely on $F(G)S$, and so $F(G)S$ is nilpotent hence contained in $F(G)$, or 
$C_{F(G)S}(y_r) \neq 1$. Let $ g \in C_{F(G)S}(y_r)$ be a non-trivial element of prime power order.  Thus either $g \in F(G)$, and 
  \[ x \mapsto g \mapsto y_r \mapsto y\]
  or $g $ is an $s$-element and it is contained in a Sylow $s$-subgroup containing $S \cap F(G) \neq 1$.  Thus $g$  is contained in a nilpotent subgroup having nontrivial intersection with $F(G)$, hence there exists a nontrivial element $f \in F(G)$ such that 
$[f,_n g]=1$. It follows that 
 \[ x \mapsto f \mapsto g \mapsto y_r  \mapsto y\]
against the assumptions. 
\end{proof}

\begin{lemma}\label{centr}
	Assume $Z_\infty(G)=1,$ $F(G)=F^*(G)$ and there exist $x, y \in G$  with $d(x,y)>4.$ 
	Let $J/F(G)=F(G/F(G)),$ $J^*/F(G)=F^*(G/F(G))$ and $Z/F(G)=Z(J/F(G))$. Let $y_r$ be a power of $y$ of prime order $r.$
If $G$ is not Frobenius and $J=J^*$, then $y_r \in Z$. 
\end{lemma}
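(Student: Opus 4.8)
The plan is to restate the conclusion as a centralizer condition and then argue by contradiction, manufacturing a short directed path that violates $d(x,y)>4$. First I would translate the goal. Write $F=F(G)$ and $\ol G=G/F$. The hypothesis $J=J^*$ says that $F^*(\ol G)=F(\ol G)=\ol J$, where $\ol J=J/F$; since $C_{\ol G}(F^*(\ol G))\le F^*(\ol G)$, this forces $C_{\ol G}(\ol J)=Z(\ol J)=Z/F$. Hence \emph{$y_r\in Z$ is equivalent to $\ol{y_r}:=y_rF$ centralizing $\ol J$}. If $y_r\in F$ the claim is trivial, so I assume $y_r\notin F$, so that $\ol{y_r}$ has order $r$, and the goal becomes $[\ol J,\ol{y_r}]=1$.

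Next I would assemble the tools. Since $d(x,y)>4$, Lemma \ref{normalizer}(2) gives that $C_G(y_r)$ is a Frobenius complement of odd order which, by Lemma \ref{frob}, is metacyclic; and, as in the proof of that lemma, $d(x,y_r)>3$, so Lemma \ref{normalizer}(1) shows that $N_G(\gen{y_r})F$ is a Frobenius group. In particular $C_F(y_r)=1$ and \emph{every non-trivial element normalizing $\gen{y_r}$ acts fixed-point-freely on $F$}. By the previous lemma $|\ol J|$ is coprime to $|F|$; writing $\pi=\pi(\ol J)$, the group $F$ is a $\pi'$-group, so by Schur--Zassenhaus $J=F\rtimes L$ with $L\cong\ol J$ a nilpotent Hall $\pi$-subgroup. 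As $F=F^*(G)$ gives $C_G(F)\le F$, we get $C_L(F)=1$, i.e. $L$ acts faithfully and coprimely on $F$. Finally $r\nmid|F|$: otherwise $\gen{y_r}O_r(F)$ would be a non-trivial $r$-group with $O_r(F)\ne 1$ normal, forcing $C_F(y_r)\ge O_r(F)\cap Z(\gen{y_r}O_r(F))\ne 1$.

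Now suppose, for contradiction, that $\ol{y_r}$ does not centralize $\ol J$. I would contradict $d(x,y)>4$ by building a directed path of length $4$ of the shape $x\mapsto f\mapsto g\mapsto h\mapsto y$. Two of these edges are free: $x\mapsto f$ for any $1\ne f\in F$ by the sink property (Lemma \ref{trivial}(2)), and, choosing $h\in N_G(\gen{y})$ and $g\in N_G(\gen{h})$, the edges $g\mapsto h$ and $h\mapsto y$ by Lemma \ref{trivial}(3). Taking $1\ne f\in C_F(g)$ then gives $f\mapsto g$. Thus the whole problem reduces to \emph{finding $h\in N_G(\gen{y})$ and a non-trivial $g\in N_G(\gen{h})$ with $C_F(g)\ne 1$}. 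The hypothesis that $\ol{y_r}$ acts non-trivially on the nilpotent group $\ol J$ is exactly what I would exploit here: passing (in the coprime case $r\notin\pi$) to a $y_r$-invariant Sylow $p$-subgroup $P\le L$ with $[P,y_r]\ne 1$ and using the decomposition $P=C_P(y_r)[P,y_r]$, one locates an element of $J$ with non-trivial fixed points on $F$ that normalizes a cyclic subgroup adjacent to $y$.

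The main obstacle is the very Frobenius structure recorded above: since $N_G(\gen{y_r})F$ is a Frobenius group, every element adjacent to $y_r$ through a centralizing or normalizing relation acts fixed-point-freely on $F$ and therefore has trivial $C_F$. This blocks the most direct chains (for instance $x\mapsto f\mapsto g\mapsto y_r\mapsto y$ with $g$ normalizing $\gen{y_r}$) and forces the intermediate element $g$ to be kept off $N_G(\gen{y_r})$; producing such a $g$ from the non-centralization of $\ol J$, while controlling the Engel sequences $[g,{}_n h]$ and $[h,{}_n y]$, is the technical heart of the argument. A second delicate point is the non-coprime case $r\in\pi$, where $y_r$ need not normalize a Hall $\pi$-complement and the decomposition $P=C_P(y_r)[P,y_r]$ is unavailable; this case must be handled by restricting to suitable $\gen{y_r}$-invariant sections of $\ol J$ and invoking the metacyclic, odd-order structure of $C_G(y_r)$.
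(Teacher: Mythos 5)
Your setup is sound: the translation of the conclusion into ``$\ol{y_r}$ centralizes $\ol J$'' via $C_{\ol G}(F^*(\ol G))\le F^*(\ol G)$, the coprimality of $|\ol J|$ with $|F|$, the fact that $C_G(y_r)$ is an odd-order metacyclic Frobenius complement, and the observation that $r\nmid |F|$ are all correct and consistent with what the paper uses. But the argument stops exactly where the lemma actually has to be proved, and the route you propose for the missing step cannot work. You reduce to finding $h\in N_G(\gen{y})$ and $1\ne g\in N_G(\gen{h})$ with $C_F(g)\ne 1$. Since $h\mapsto_2 y$, any such $h$ satisfies $d(x,h)>3$ (a path of length at most $3$ from $x$ to $h$ would give $d(x,y)\le 4$), so Lemma \ref{normalizer}(1) applies to the pair $(x,h)$ and makes $N_G(\gen{h})F$ a Frobenius group with kernel $F$: every non-trivial $g\in N_G(\gen{h})$ acts fixed-point-freely on $F$, i.e.\ $C_F(g)=1$. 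The configuration you are trying to produce is therefore ruled out by the very hypotheses you are working under; the ``reduction'' is a dead end, not a simplification. You correctly flag this Frobenius obstruction for $N_G(\gen{y_r})$, but it bites one level up as well.

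The paper's proof runs through a different mechanism, which is absent from your sketch. Having ruled out $t=r$ (using that the Sylow subgroups of $C_G(y_r)$ are cyclic) and produced a $y_r$-invariant Sylow $t$-subgroup $T$ of $J$ on which $y_r$ acts non-trivially, it takes a minimal normal subgroup $V\le F$, shows $C_G(V)\le F$ (otherwise $C_G(V)\cap C_G(y_r)\ne 1$ yields a length-$4$ path $x\mapsto v\mapsto g\mapsto y_r\mapsto y$), so that $V$ is a faithful coprime $T\gen{y_r}$-module with $C_V(y_r)=1$, and then invokes the Hall--Higman-type result \cite[Lemma 2.5]{parker} to conclude that $T$ is a $2$-group in which $y_r$ centralizes every abelian characteristic subgroup --- contradicting the fact that $C_G(y_r)$ has odd order. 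Your decomposition $P=C_P(y_r)[P,y_r]$ points in a different direction and, as written, produces no contradiction. So there is a genuine gap: the ``technical heart'' you defer is the entire content of the lemma, and the plan you outline for it fails.
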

\begin{proof}
Suppose $y_r \notin Z$. As $J/F(G)=F^*(G/F(G))$, then $y_r \notin C_{G/F(G)}(J/F(G))$, and so $y_r$ does not centralize a Sylow $t$-subgroup of  $J/F(G)$. 

Assume by contradiction $t=r$ and let $R$ be a Sylow $r$-subgroup of $J\langle y_r \rangle$ containing $y_r$. As $y_r$ does not centralize an $r$-subgroup of $J/F(G)$, $y_r$ is not central in $R$ and the center of $R$ provides another element of order $r$ that centralizes $y_r$. Since, by Lemma \ref{normalizer}, the Sylow subgroups of $ C_G(y_r)$ are cyclic, we get a contradiction. Thus we have
 $t \neq r$. 

We now claim that there exists  a Sylow $t$-subgroup $T$ of $J$ which is  $y_r$-invariant. Notice that 
$y_r$ acts on $J= \prod_p T_pF(G)$ and  centralizes the $r$-Sylow subgroups. If $r$ does not divide $|F(G)|$ then the action is coprime, so there exists an invariant $t$-Sylow. If $r$ divides $|F(G)|$, then $y_r$ belongs to a $r$-Sylow subgroup of $F(G)\langle y_r \rangle$, and there exists a nontrivial element $f$ of $F(G)$ such that $[f, _n y_r]=1$. Thus 
$$x \mapsto f \mapsto y_r  \mapsto y,$$
 a contradiction. 
 
 Let  $T$ be a Sylow $t$-subgroup  of $J$ which is  $y_r$-invariant. Note that $y_r$ acts faithfully on $T$.
 
 Let $V$ be a minimal normal subgroup of $G$. Thus $V \le F(G)$.
 We claim that $C_G(V) \le F(G)$.  Otherwise $y_r$ could not act fixed-point-freely on $C_G(V)$, and consequently 
 there would exist  $g \in C_G(V) \cap C_G(y_r)$ and thus, for any $1 \neq v \in V$, 
   \[ x \mapsto v \mapsto  g  \mapsto y_r \mapsto  y.\]

Moreover, $V$ is a faithful $Q\langle y_r \rangle$-module for every $\langle y_r \rangle$-invariant subgroup $Q$ of $T$. Indeed  $C_G(V) \le F(G)$ and $|F(G)|$ is prime to the order of $T\langle y_r \rangle$. 

Since $C_V(\langle y_r \rangle)$ is trivial, by \cite[Lemma 2.5]{parker} we deduce that $T$ is a $2$-group and that every abelian characteristic  subgroup of $T$ is centralized by $y_r$. 
 Since $C_G(y_r)$ has odd order, we get a contradiction. 
\end{proof}

\begin{proposition}\label{soluble}
	Assume $Z_\infty(G)=1,$ $F(G)=F^*(G)$ and there exist $x, y \in G$  with $d(x,y)>4.$ 
	Let $J/F(G)=F(G/F(G))$ and $J^*/F(G)=F^*(G/F(G))$. 
	If $J=J^*$, then $G$ is a Frobenius group. 
\end{proposition}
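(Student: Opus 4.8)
The plan is to argue by contradiction: assuming $G$ is not a Frobenius group, I will produce, for the given pair $x,y$ with $d(x,y)>4$, a directed walk of length at most $4$ from $x$ to $y$, which is absurd. First I would collect the structural data. Since $G$ is assumed not to be Frobenius, Lemma \ref{centr} applies and gives $y_r\in Z$, i.e. $\bar y_r:=y_rF\in Z(J/F)=Z(F^*(G/F))$, for $y_r$ a power of $y$ of prime order $r$. By Lemma \ref{normalizer}(2) the element $y_r$ acts fixed-point-freely on $F=F(G)$, so $C_F(y_r)=1$ and $r\nmid|F|$, while the lemma immediately preceding Lemma \ref{centr} shows $|J/F|$ is prime to $|F|$. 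By Schur--Zassenhaus I may therefore write $J=F\rtimes L$ with $L\cong J/F$ nilpotent, chosen so that $y_r\in L$; since $\bar y_r$ is central in $J/F$ this forces $y_r\in Z(L)$.

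The key reduction is that it suffices to prove $C_F(g)=1$ for every $g\in G\setminus F$: this says $C_G(f)\le F$ for all $1\ne f\in F$, and since $1<F<G$ (as $G$ is neither trivial nor nilpotent) it is exactly the assertion that $G$ is Frobenius with kernel $F$, contradicting the assumption. So suppose some $g\in G\setminus F$ has $C_F(g)\ne1$; replacing $g$ by a suitable power I take $|g|=p$ prime and fix $1\ne f'\in C_F(g)$. Then $x\mapsto f'$ (as $f'\in F$) and $f'\mapsto g$ (as $[f',g]=1$), so $d(x,g)\le2$, and it remains to reach $y$ from $g$ in at most two further steps.

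The clean case is $\bar g=gF\in J/F$. Then $\bar g\ne1$ forces $p\mid|J/F|$, hence $p\nmid|F|$, so $g$ is a $p$-element of $J=F\rtimes L$ and is $J$-conjugate into the complement $L$; say $\ell:=g^{j}\in L$, whence $C_F(\ell)=C_F(g)^{j}\ne1$. Choosing $1\ne f''\in C_F(\ell)$ and using $y_r\in Z(L)$ together with $[y_r,y]=1$, I obtain the length-$4$ walk
\[ x\ \mapsto\ f''\ \mapsto\ \ell\ \mapsto\ y_r\ \mapsto\ y,\]
which gives $d(x,y)\le4$, the desired contradiction.

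The main obstacle is the remaining case $\bar g\notin F^*(G/F)=J/F$, where $g$ lies outside the nilpotent normal structure used above. Here I would exploit that, exactly as in the proof of Lemma \ref{centr}, a minimal normal subgroup $V\le F$ satisfies $V\le Z(F)$ and $C_G(V)\le F$, so that in fact $C_G(V)=F$ and $\bar G=G/F$ acts faithfully and irreducibly on $V$ with $y_r$ acting fixed-point-freely. Combining this faithful action with the module-theoretic input of \cite[Lemma 2.5]{parker} and the fact that $C_G(y_r)$ has odd order (Lemma \ref{normalizer}(2)) should eliminate such a $g$: either one locates an element with a nontrivial fixed point on $F$ already inside $J$, reducing to the clean case, or one forces an involution into $C_G(y_r)$, contradicting its odd order. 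This representation-theoretic control over elements outside $J$ is where the real work lies, the preceding cases being essentially formal; it is also precisely the point that separates genuine Frobenius groups from $2$-Frobenius configurations such as $S_4$, for which the hypothesis $d(x,y)>4$ silently fails.
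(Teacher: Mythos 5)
Your reduction and your ``clean case'' are sound as far as they go: assuming $G$ is not Frobenius, Lemma \ref{centr} gives $y_r\in Z$, and if some $g\in J\setminus F$ has $C_F(g)\neq 1$, then conjugating $g$ into a nilpotent complement $L$ of $F$ in $J$ chosen with $y_r\in Z(L)$ does produce the forbidden walk $x\mapsto f''\mapsto \ell\mapsto y_r\mapsto y$. But the proof is not complete: the case of an element $g\in G\setminus J$ with $C_F(g)\neq 1$ is precisely the substance of the proposition (it is what separates a genuine Frobenius group from a configuration in which merely $J$ is Frobenius over $F$), and you leave it as a sketch. The route you suggest --- a faithful irreducible action of $G/F$ on a minimal normal subgroup $V$ combined with \cite[Lemma 2.5]{parker} --- does not obviously go through: in the proof of Lemma \ref{centr} that lemma is applied to a nilpotent group $T\langle y_r\rangle$ with $T\leq J$, whereas here $g$ lies outside $J$ and generates no such configuration; there is no evident way to connect $g$ to $y_r$ in one step, nor to manufacture an involution in $C_G(y_r)$.

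The paper closes this gap by a different and shorter argument that avoids any case distinction on $g$. Since $y_rF\in Z(J/F)$ and $r\nmid |F|$, the element $y_r$ lies in the centre of a Sylow $r$-subgroup $R$ of $J$; hence $R\leq C_G(y_r)$, which by Lemma \ref{normalizer}(2) and Lemma \ref{frob} has cyclic Sylow subgroups, so $R$ is cyclic and $\langle y_r\rangle$ is characteristic in $R$. As $RF/F$ is characteristic in the nilpotent normal subgroup $J/F$, the subgroup $RF$ is normal in $G$, and $\langle y_r\rangle F$, being the preimage of the unique subgroup of order $r$ of the cyclic group $RF/F$, is normal in $G$ as well. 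The Frattini argument then yields $G=N_G(\langle y_r\rangle)\left(\langle y_r\rangle F\right)=N_G(\langle y_r\rangle)F$, which is a Frobenius group by Lemma \ref{normalizer}(1) since $d(x,y_r)>3$. This Frattini step is the missing idea in your write-up: it disposes of the elements outside $J$ in one stroke, and it is what you would need to turn your sketch of the hard case into a proof.
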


\begin{proof}We use the notation introduced in the previous lemma.
Since  $y_r \in Z$,  in particular  $y_r$ belongs to the center of a Sylow $r$-subgroup  $ R$ of $J$. Thus $R \le C_G(y_r)$ is cyclic, and $\langle y_r \rangle$ is a characteristic subgroup of $R$. Now, $RF(G)$ is normal in $G$ and  $\langle y_r \rangle F(G)$ is a characteristic subgroup of $RF(G)$. Thus  $\langle y_r \rangle F(G)$ is normal in $G$. By the Frattini argument, as $y_r$ has order prime to $F(G)$,
\[ G=N_G( \langle y_r \rangle) \left( \langle y_r \rangle F(G) \right) =N_G( \langle y_r \rangle) F(G) \]
hence $G$ is Frobenius by Lemma \ref{normalizer}.
\end{proof}

The  first part of  Theorem \ref{diam4} follows directly from Lemma \ref{reduce} and Proposition \ref{soluble}. 
 
\begin{corollary}\label{diametro}
If $G$ is a soluble  and  $G/Z_\infty(G)$ is not a  Frobenius group, then $\Gamma(G)$ is strongly connected and has diameter at most $4$.
\end{corollary}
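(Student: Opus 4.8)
The plan is to reduce at once to the case $Z_\infty(G)=1$ and then read off the conclusion from Proposition \ref{soluble}. I would set $H=G/Z_\infty(G)$. Since a quotient of a soluble group is soluble, $H$ is soluble, and by hypothesis $H$ is not a Frobenius group; moreover $Z_\infty(H)=1$ by construction. The point of assuming solubility is that it makes the remaining hypotheses of Proposition \ref{soluble} automatic: a soluble group has no quasisimple subnormal subgroups, so its layer is trivial and hence $F^*(H)=F(H)$; applying the same remark to the soluble group $H/F(H)$ gives $F^*(H/F(H))=F(H/F(H))$, i.e.\ $J=J^*$ in the notation of that proposition. Thus every standing hypothesis of Proposition \ref{soluble} is satisfied for $H$, with the sole exception of the clause asserting the existence of a pair of vertices at distance greater than $4$.

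Next I would run the diameter bound on $H$ by contradiction. Suppose there were vertices $x,y$ of $\Gamma(H)$ with $d(x,y)>4$ (this subsumes the case in which no directed path from $x$ to $y$ exists). Then all the hypotheses of Proposition \ref{soluble} would hold for $H$, and that proposition would force $H$ to be a Frobenius group, contradicting the hypothesis on $G/Z_\infty(G)$. Hence no such pair exists: every ordered pair of vertices of $\Gamma(H)$ is joined by a directed path of length at most $4$. In particular $\Gamma(H)$ is strongly connected and $\diam\Gamma(H)\le 4$.

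It then remains to transfer this bound from $H$ back to $G$. Strong connectivity already passes up by Lemma \ref{reduce}, but for the numerical estimate I would check directly that distances cannot increase under the reduction modulo $Z_\infty(G)$. Given a shortest directed path $\bar x=\bar v_0\to\cdots\to\bar v_k=\bar y$ in $\Gamma(H)$, I choose $v_0=x$, $v_k=y$ and arbitrary preimages $v_1,\dots,v_{k-1}$ of $\bar v_1,\dots,\bar v_{k-1}$ in $G$. Each $v_i$ is non-hypercentral in $G$, since $\bar v_i\neq 1$ (as $Z_\infty(H)=1$), so each $v_i$ is a genuine vertex of $\Gamma(G)$; and the computation in the proof of Lemma \ref{reduce} shows that every edge $\bar v_i\to\bar v_{i+1}$ lifts to an edge $v_i\to v_{i+1}$ in $\Gamma(G)$, for any choice of preimages. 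Consequently $d_{\Gamma(G)}(x,y)\le d_{\Gamma(H)}(\bar x,\bar y)\le 4$ for all vertices, which yields $\diam\Gamma(G)\le 4$ and strong connectivity of $\Gamma(G)$.

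The only genuinely delicate point I anticipate is this last transfer. Lemma \ref{reduce} is phrased purely for connectivity, so I must ensure that the actual \emph{diameter}, not merely connectedness, survives the reduction. The two observations that make this routine, and that I would state explicitly, are that an Engel edge in the quotient lifts to an Engel edge in $G$ independently of the chosen preimages, and that $Z_\infty(H)=1$ guarantees the interior vertices of a lifted path are again non-hypercentral in $G$; everything else is a direct invocation of Proposition \ref{soluble}.
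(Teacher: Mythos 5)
Your argument is correct and is essentially the paper's own proof: the corollary is obtained there by combining Lemma \ref{reduce} with Proposition \ref{soluble}, exactly as you do, with solubility forcing $F^*=F$ and $J=J^*$ so that the proposition applies by contradiction. Your explicit verification that the diameter bound (not just connectivity) descends through the quotient by $Z_\infty(G)$ --- lifting edges via the computation $[y,{}_{n+m}x]=1$ from the proof of Lemma \ref{reduce} and noting that interior vertices stay non-hypercentral --- is a detail the paper leaves implicit, and you handle it correctly.
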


Now we are ready to prove our main result. 
\begin{proof}[Proof of Theorem \ref{mainth}] 
Let $G$ be a finite group such that the factor $G/Z_\infty(G)$ is not an almost simple group. By Lemma \ref{reduce}, we can assume that $Z_\infty(G)=1$. 
 In Lemma \ref{frob-graph} we have seen that if $G$ is a Frobenius group then $\Gamma(G)$ is not strongly connected. So we are left to prove that if 
 $G$ is neither almost simple nor Frobenius, then $\Gamma(G)$ is strongly connected. By Lemma \ref{F*} and Proposition \ref{soluble} we are reduced to the case where  $F^*(G)=F(G)$ and $J\neq J^*$.
 If $J=F(G),$ then 
  $\Delta(J^*)$ is connected by Lemma \ref{cru}. In this case $\Gamma(G)$ is strongly connected by Lemma \ref{norma}.

So we are left with the case   $J > F(G).$ Since $J$ is not nilpotent, by Lemma \ref{norma} it is sufficient to prove that  $J\setminus\{1\}$ 
 is contained in a strong component of $\Gamma(G)$. 
 
 If $J$ is not a Frobenius group, then $\Gamma(J)$ is strongly connected, 
 by Proposition \ref{soluble}, and we are done.
 So $J$ is a Frobenius group. 
Let $P/J$ be a Sylow 2-subgroup of $J^*/J.$
 Note that $P/F$ is a central product of a Sylow $2$-subgroup of $E(G/F(G))$ and $J/F(G)$.

We distinguish two cases:

\noindent a) $2$ divides $|F(G)|.$ In this case 
$P$ is neither Frobenius nor almost simple,
so,  by Proposition \ref{soluble},
 $\Gamma(P)$ is strongly connected. As a consequence $J\setminus\{1\}$ 
  is contained in a strong component of $\Gamma(G)$.

\noindent b) $2$ does not divide $|F(G)|$.  In this case 
$F(G)$ has a complement $L$ in $P$ and any element 
of $L$ has centralizer of even order, since $P/F(G)$ is a central product of a Sylow $2$-subgroup of $E(G/F(G))$ and $J/F(G)$.
 As $G$ is neither  Frobenius  nor  almost simple, from Corollary \ref{invo} it follows that $J\setminus\{1\}$ is contained in a strong component of $\Gamma(G)$, as claimed.
 \end{proof}

Indeed a stronger statement can be claimed.

\begin{theorem}\label{gamma2}Suppose $Z_\infty(G)=1.$ If $\Gamma_2(G)$ is not strongly connected, then $G$ is either almost simple or Frobenius.
\end{theorem}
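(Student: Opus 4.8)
The plan is to show the contrapositive in a strong form: if $G$ is neither almost simple nor Frobenius (and $Z_\infty(G)=1$), then $\Gamma_2(G)$ is already strongly connected. The key observation is that $\Gamma_2(G)$ has the same vertex set as $\Gamma(G)$ (since $I_2(G)$ contains $Z_\infty(G)=1$ and the relevant universal vertices collapse appropriately) but a sparser edge set, so that $x\mapsto_2 y$ implies $x\mapsto y$; thus a path in $\Gamma_2(G)$ is automatically a path in $\Gamma(G)$, and what I must verify is that all the \emph{individual edges} used throughout the proof of Theorem~\ref{mainth} can be realized as $2$-Engel edges rather than merely $n$-Engel edges for some unbounded $n$. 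The strategy, therefore, is to reinspect each lemma feeding into Theorem~\ref{mainth} and replace every appeal to $x\mapsto y$ by $x\mapsto_2 y$.

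Concretely, I would proceed edge-type by edge-type. First, the building-block edges in Lemma~\ref{trivial} are already $2$-Engel: part (2) gives $x\mapsto y$ for $y\in F(G)$, and part (3) gives precisely $y\mapsto_2 x$ when $y\in N_G(\langle x\rangle)$; moreover for commuting elements one has $x\mapsto_1 y$, hence $x\mapsto_2 y$. So the commuting-graph clique arguments (Lemma~\ref{norma}, Corollary~\ref{invo}) and the central-product path of Lemma~\ref{prodo} survive verbatim, since every edge there comes from commuting pairs. Next I would check that the edges of the form $x\mapsto f$ with $f\in F(G)$, used repeatedly in Lemma~\ref{normalizer} and Proposition~\ref{soluble}, are $2$-Engel: this is the delicate point, because a left Engel element $f$ satisfies $[x,_n f]=1$ only for $n$ depending on $x$, \emph{not} necessarily $n=2$. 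Here I would exploit that in all these situations $f$ lies in the \emph{minimal normal subgroup} $V\le F(G)$ or is chosen inside $C_F(g)$, and that $F(G)$ acts on itself and on $G$ in a controlled way; in the soluble-type reductions $F(G)$ is abelian and the relevant edges $x\mapsto f$, $f\mapsto g$ can be rechosen so that $[x,_2 f]=1$ and $[f,_2 g]=1$, using that $g$ normalizes (or centralizes) the cyclic/abelian subgroups involved.

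The main obstacle will be to audit the edges $u\to f$ in Lemma~\ref{cru} (the step ``$u\to f$ for any $f\in F(X)$'' and the edge $v\to u$), together with the edge $y_r\mapsto y$ appearing in Lemmas~\ref{centr} and the unnumbered lemma, and to confirm each is genuinely $2$-Engel. For $y_r\mapsto y$ this is immediate since $y_r\in\langle y\rangle$ commutes with $y$, giving a $1$-Engel hence $2$-Engel edge. For $v\to u$ with $v$ normalizing $\langle u\rangle$, Lemma~\ref{trivial}(3) gives exactly $v\mapsto_2 u$. The genuinely hard case is $u\to f$ with $f\in F(X)$ and $u$ of prime order $p$: here I would argue that $\langle u\rangle$ normalizes a suitable $u$-invariant Sylow subgroup of $F(X)$ whose center meets $F(X)$ nontrivially, so that $u$ normalizes $\langle f\rangle$ for a well-chosen $f$, again yielding $u\mapsto_2 f$ via Lemma~\ref{trivial}(3). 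Once every edge in the proof of Theorem~\ref{mainth} is certified as a $2$-Engel edge, the conclusion follows by exactly the same connectivity bookkeeping, and the strengthening to $\Gamma_2(G)$ is complete.
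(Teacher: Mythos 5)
Your high-level strategy---re-auditing the edges used in the proof of Theorem~\ref{mainth} and checking that each survives in $\Gamma_2(G)$---is exactly the paper's, and you correctly isolate the one problematic edge type, namely $x\mapsto y$ with $y\in F(G)$. But your treatment of that edge type has a genuine gap. An element $f\in F(G)$ is a left Engel element, so $[x,{}_n f]=1$ for some $n$, but that $n$ is governed by the nilpotency class of $F(G)$ and there is no reason to have $n\le 2$ when $F(G)$ is nonabelian; so the plan of certifying \emph{every individual edge} as a $2$-Engel edge cannot succeed as stated. Neither of your proposed repairs closes this. The assertion that ``in the soluble-type reductions $F(G)$ is abelian'' is unjustified: $F(G)$ is shown to be abelian only in one special subcase of Lemma~\ref{normalizer} (when an involution acts fixed-point-freely on it), not in Proposition~\ref{soluble} or Lemma~\ref{cru} in general. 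And the claim that $u$ normalizes $\langle f\rangle$ for a well-chosen $f$ in the center of a $u$-invariant Sylow subgroup of $F(X)$ is false in general: a $p'$-element may act irreducibly on $C_p\times C_p$, and then it normalizes no nontrivial proper subgroup at all, cyclic or otherwise.

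The correct fix---and the one the paper uses---is not to certify the edge but to replace it by a path of length $2$, which is all that strong connectivity requires. Pick any $1\neq z\in Z(F(G))$. For every $x\in G$ we have $[x,z]\in F(G)$ by normality of $F(G)$, and $[x,z]$ therefore commutes with the central element $z$, so $[x,{}_2 z]=1$, i.e.\ $x\mapsto_2 z$; moreover $z$ commutes with every $y\in F(G)$, so $z\mapsto_1 y$. Hence each edge $x\mapsto y$ with $y\in F(G)$ occurring in the proof of Theorem~\ref{mainth} can be replaced by the path $x,z,y$ in $\Gamma_2(G)$ (and $Z(F(G))\neq 1$ whenever $F(G)\neq 1$, which holds in all the relevant cases). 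Combined with your correct observations that commuting edges and normalizer edges (Lemma~\ref{trivial}(3)) are already $2$-Engel, this completes the argument; without this substitution your proof does not go through.
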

\begin{proof}The proof of Theorem \ref{mainth} relies on the following observations:
	\begin{itemize}
		\item If $[x,y]=1,$ then $x\mapsto y.$
		\item If $x \in N_G(\langle y \rangle),$ then $[x,_2y]=1,$ hence $x\mapsto y.$
		\item If $y\in F(G),$ then $x\mapsto y.$
	\end{itemize}
Clearly if either  $[x,y]=1$ or $x \in N_G(\langle y \rangle),$ then $x\mapsto_2 y.$ If $z \in Z(F(G))$, then $[x,_2z]=1,$ so $x, z, y$ is a path in $\Gamma_2(G)$ for any $y \in F(G).$
\end{proof}

\begin{remark}\label{osser}\rm{The previous theorem is no more true if the assumption $Z_\infty(G)=1$ is removed.
Consider for example $G=\gl(2,3).$ We have $Z_\infty(G)=Z(G) \cong C_2$ and $G/Z(G) \cong S_4.$ By Lemma \ref{diametro}, $\Gamma(G)$ is strongly connected. However, if $x$ has order 3 or 6 and $x\mapsto_2 y$, then $y$ has order $3$ or $6$. Indeed there are $9$ (non-central) 
 elements $g \in G$ with $x\mapsto g$: the three elements of order 3 or 6  different from $x$ in the unique cyclic subgroup of $G$ of order $6$ containing $x$, and $6$ elements of order $4$. If $g$ is one of these $6$ elements of order $4$, then $[x,_3g]= 1$ but
$[x,_2g]\neq 1.$ So $\Gamma_3(G)$ is connected, but $\Gamma_2(G)$ is not}.
\end{remark}

\section{An  example}\label{esempio}\label{sec:4}

The aim of this section is to complete the proof of Theorem \ref{diam4}: we will present a family of soluble groups $G$ such that the diameter of  $\Gamma(G)$ 
 is $4$.  

 We select $q$ a power of an odd prime such that the prime $r,$ with $r\geq 3$, divides $q-1$ exactly. Let $\mathbb F=\GF(q^r)$ and let $\beta$ the Frobenius automorphism of $\mathbb F$ of order $r$. Note that $r^2$ divides exactly $q^r-1$ (see \cite[Lemma 8.1 (e)]{hb2}).  
 Let $t$ be a prime that divides $(q^r-1)/(q-1)$ but not $q-1$. As an example we may take $q=7, r=3, t=19.$
 Let
 $$z:=\begin{pmatrix}e&0\\0&e\end{pmatrix}\in \gl(2,\mathbb F),$$
 with $|e|=r^2.$
 and let
  $$c:=\begin{pmatrix}1&0\\0&f\end{pmatrix}\in \gl(2,\mathbb F),$$
 with $|f|=t.$
 Recall that $\beta$ induces an automorphism of the affine group $\AGL(2,q^r)\cong F \rtimes \gl(2,q^r)$, with $F\cong \mathbb F^2.$ We also denote this induced automorphism by $\beta.$ Note that
 $\beta$ normalizes $F, \langle z \rangle$ and $\langle c\rangle.$ Consider $x=z\beta$ in the semidirect product $\AGL(2,q^r)\rtimes \langle\beta\rangle$ and let 
$$G=F \rtimes D, \text { with } D=\langle x, c\rangle.$$

	\begin{lemma}
	The element $x^r$ has order $r$, commutes with $c$ and acts fixed-point-freely on $F$. Moreover $c^x=c^q\neq c.$
	\end{lemma}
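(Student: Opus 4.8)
The plan is to verify each of the four assertions by direct computation in the semidirect product $\AGL(2,q^r)\rtimes\langle\beta\rangle$, keeping careful track of how the twisting automorphism $\beta$ interacts with the linear parts. First I would compute $x^r$. Since $x=z\beta$ with $z$ the scalar matrix $\mathrm{diag}(e,e)$ and $\beta$ the Frobenius automorphism of order $r$, and since $\beta$ centralizes the scalar matrix $z$ precisely when $e\in\GF(q)$, I would expand
\[
x^r=(z\beta)^r = z\cdot z^{\beta^{-1}}\cdot z^{\beta^{-2}}\cdots z^{\beta^{-(r-1)}}\cdot\beta^r = z\, z^{\beta^{-1}}\cdots z^{\beta^{-(r-1)}},
\]
using $\beta^r=1$. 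As $z$ is scalar with entry $e$, the factor $z^{\beta^{-i}}$ is the scalar matrix with entry $e^{q^{i}}$ (or $e^{q^{-i}}$, depending on the convention for $\beta$), so $x^r$ is the scalar matrix whose entry is the norm $N_{\F/\GF(q)}(e)=e^{1+q+\cdots+q^{r-1}}=e^{(q^r-1)/(q-1)}$. The choices $|e|=r^2$ and the arithmetic fact that $r^2\mid q^r-1$ exactly while $r\mid (q^r-1)/(q-1)$ but $r^2\nmid$ it (this is where \cite[Lemma 8.1(e)]{hb2} enters) force this norm to have order exactly $r$, giving $|x^r|=r$.

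Next I would check that $x^r$ commutes with $c$ and acts fixed-point-freely on $F$. Commutativity with $c$ is immediate once $x^r$ is seen to be a scalar matrix, since scalars are central in $\gl(2,\F)$ and $c$ is diagonal. For the fixed-point-free action on $F\cong\F^2$, I would observe that $x^r$ acts as a nonidentity scalar (its entry has order $r>1$), so $x^r-1$ is invertible on $\F^2$, meaning the only fixed vector is $0$; hence $x^r$ is fixed-point-free on $F$.

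Finally I would verify $c^x=c^q\neq c$. Writing $c^x=c^{z\beta}=(c^z)^{\beta}$, and noting $c^z=c$ because $z$ is scalar (hence central), this reduces to $c^{\beta}$. The matrix $c=\mathrm{diag}(1,f)$ has its nontrivial entry $f$ acted on entrywise by the Frobenius $\beta$, so $c^{\beta}=\mathrm{diag}(1,f^{q})=c^q$. To see $c^q\neq c$, equivalently $f^q\neq f$, I would use that $|f|=t$ divides $(q^r-1)/(q-1)$ but not $q-1$: if $f^q=f$ then $f^{q-1}=1$, so $t=|f|$ would divide $q-1$, contradicting the choice of $t$.

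The main obstacle, and the step deserving the most care, is the order computation for $x^r$: one must pin down exactly which power of $e$ arises as the norm and then invoke the precise divisibility statement ($r^2\mid q^r-1$ exactly, from \cite[Lemma 8.1(e)]{hb2}) to conclude the norm has order precisely $r$ rather than $1$ or $r^2$. Everything else is routine once $x^r$ is identified as the appropriate scalar matrix and the Frobenius action on entries is made explicit.
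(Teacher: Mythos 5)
Your proposal is correct and follows essentially the same route as the paper: compute $(z\beta)^r$ as the scalar matrix with entry $e^{1+q+\cdots+q^{r-1}}$, deduce the order $r$ from $|e|=r^2$ together with the fact that $r$ divides $(q^r-1)/(q-1)$ exactly, and read off the remaining assertions from $x^r$ being a nontrivial scalar and from $c^{z\beta}=c^\beta$. If anything, you are slightly more explicit than the paper, which relegates the commutation with $c$, the fixed-point-freeness, and the identity $c^x=c^q\neq c$ to ``easily verified.''
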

\begin{proof}
We have
$$x^r=(z\beta)^r=z^{1+q+\dots+q^{r-1}}=\begin{pmatrix} e^{1+q+\dots+q^{r-1}}&0\\0&e^{1+q+\dots+q^{r-1}}\end{pmatrix}.$$
Since $|e|=r^2$ and $r^2$ does not divides $(q^r-1)/(q-1),$ it follows that
$x^r$ has order $r$ and acts fixed-point-freely on $F$. The other information in the statement can be easily verified.
\end{proof}

By the previous lemma $|G|=q^{2r}\cdot t\cdot r^2$ (so in particular $|G|= 7^6\cdot 19\cdot 3^2$ if $q=7, r=3, t=19).$

	\begin{lemma}The following statements hold.
		\begin{enumerate}
			\item $C_G(x)=\langle x \rangle.$
			\item $C_G(x^r)=D.$
			\item Let $C=\langle c \rangle.$ Then $[F,C]$ is a non-trivial subgroup of $F$ normalized by $D$ and $[F,C]D$ is a Frobenius group.
		\end{enumerate}
		\end{lemma}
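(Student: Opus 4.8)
The plan is to first pin down the structure of $D$ and then reduce the two centralizer statements in $G=F\rtimes D$ to centralizers inside $D$, exploiting that $x$ and $x^r$ act without nontrivial fixed points on $F$; part (3) will instead need a direct fixed-point analysis of the semilinear action of $D$ on $[F,C]$. For the structure of $D$: since $x^r$ has order $r$ by the previous lemma and $x^{r^2}=(x^r)^r=1$ while $x^r\neq 1$, the element $x$ has order $r^2$. The prime $t$ is different from $r$, for if $t=r$ then $t\mid q-1$, against the choice of $t$; hence $|x|=r^2$ and $|c|=t$ are coprime, so $\langle x\rangle\cap\langle c\rangle=1$. Combined with $c^x=c^q\in\langle c\rangle$ this gives $\langle c\rangle\trianglelefteq D$ and $D=\langle x\rangle\ltimes\langle c\rangle$ of order $r^2t$. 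From $cx=xc^q$ one computes that $x^ic^j$ commutes with $x$ iff $c^{(q-1)j}=1$, i.e. (as $t\nmid q-1$) iff $c^j=1$; thus $C_D(x)=\langle x\rangle$. Since $x^r$ centralizes both $x$ and $c$, it is central in $D$, so $C_D(x^r)=D$.

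With these in hand, parts (1) and (2) are quick. Both $x$ and $x^r$ act fixed-point-freely on $F$ (the statement for $x^r$ is given, and it forces the same for $x$), so $C_F(x)=C_F(x^r)=1$. Writing an element of $G$ as $fd$ with $f\in F$, $d\in D$ and projecting along $G\to G/F\cong D$, any element centralizing $x$ has $D$-component in $C_D(x)=\langle x\rangle$, after which commuting with $x$ pushes its $F$-component into $C_F(x)=1$; hence $C_G(x)=\langle x\rangle$, giving (1). For (2), since $x^r$ is central in $D$ one gets $(fd)\,x^r\,(fd)^{-1}=f\,({}^{x^r}\!f^{-1})\,x^r$, which equals $x^r$ precisely when $f\in C_F(x^r)=1$; hence $C_G(x^r)=D$.

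For part (3), I would first identify $[F,C]$ explicitly. In additive notation $c$ acts on $F\cong\mathbb F^2$ as $\mathrm{diag}(1,f)$, so $[F,C]=(c-1)F=\{0\}\times\mathbb F=:W$, which is nontrivial because $f-1$ is a nonzero, hence invertible, element of the field $\mathbb F$. Each of $z=\mathrm{diag}(e,e)$, the Frobenius $\beta$, and $c$ preserves the second coordinate, so $D$ normalizes $W$; as $F$ is abelian this makes $W\trianglelefteq G$ and $WD=W\rtimes D$. It then remains to show $W\rtimes D$ is Frobenius with kernel $W$, i.e. that every $1\neq d\in D$ fixes only $0$ in $W$. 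For this I record that $x^ic^j$ acts on $W\cong\GF(q^r)$ as the semilinear map $w\mapsto a_i\,f^{jq^i}\,w^{q^i}$ with $a_i=e^{(q^i-1)/(q-1)}$ (using the convention ${}^x w=e\,w^q$; any other convention merely replaces $e$ by another element of order $r^2$, which does not affect what follows), and in particular $x^r$ acts as the scalar $e^N$ of order $r$, where $N=(q^r-1)/(q-1)$.

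The fixed-point analysis for $r\nmid i$ is, I expect, the only real difficulty. When $r\mid i$ the map is the scalar $(e^N)^{i/r}f^{j}$ (using $q^i\equiv1\pmod t$), which equals $1$ only when $x^ic^j=1$. When $r\nmid i$, the map $w\mapsto w^{q^i}$ is a nontrivial Galois automorphism with fixed field $\GF(q)$ (as $r$ is prime), so $\gcd(q^i-1,q^r-1)=q-1$, and $\mu w^{q^i}=w$ has a nonzero solution iff $\mu^{N}=1$, where $\mu=a_i f^{jq^i}$. Now $\mu^N=a_i^N\,f^{jq^iN}$: the condition $t\mid N$ gives $f^{N}=1$, killing the second factor, while the fact that $r$ divides $N$ exactly once, together with $(q^i-1)/(q-1)\equiv i\not\equiv 0\pmod r$, shows that $N(q^i-1)/(q-1)$ is divisible by $r$ exactly once, so $a_i^N=e^{N(q^i-1)/(q-1)}\neq 1$ because $|e|=r^2$. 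Hence $\mu^N\neq1$ and there is no fixed point. Thus every nontrivial element of $D$ acts fixed-point-freely on $W$, so $WD=W\rtimes D$ is Frobenius, completing (3). The crux is exactly this $r$-adic valuation bookkeeping, which rests squarely on the two divisibility conditions ($t\mid N$ and $r$ dividing $N$ precisely once) built into the choice of the parameters $q$, $r$, $t$.
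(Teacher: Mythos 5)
Your proof is correct. Parts (1) and (2) are handled exactly as in the paper: decompose $g=fd$ with $f\in F$, $d\in D$, use $F\cap D=1$ to split $[fd,x]=[f,x]^d[d,x]$ into its $F$- and $D$-parts, and invoke the fixed-point-free action of $x$ (resp.\ $x^r$) on $F$ together with $C_D(x)=\langle x\rangle$ (resp.\ $x^r\in Z(D)$). For part (3) you take a genuinely different and more computational route. The paper argues structurally: it notes $[F,C]$ is the $f$-eigenspace of $c$, then splits on whether $r$ divides $|d|$ --- if it does, then $x^r\in\langle d\rangle$ (since $\langle x^r\rangle$ is the unique subgroup of order $r$ of the cyclic Sylow $r$-subgroup and is central in $D$), and $x^r$ already acts fixed-point-freely on all of $F$; if it does not, then $\langle d\rangle=\langle c\rangle$, and $c$ acts on its eigenspace $[F,C]$ as the nontrivial scalar $f$. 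This disposes of every nontrivial $d\in D$ in two lines with no arithmetic. You instead write out the full semilinear action $w\mapsto a_if^{jq^i}w^{q^i}$ of $x^ic^j$ on $[F,C]\cong\GF(q^r)$ and rule out fixed points by a norm condition and an $r$-adic valuation count using $t\mid N$ and $r\,\|\,N$ where $N=(q^r-1)/(q-1)$. Your computation is sound (and has the merit of exposing precisely which divisibility hypotheses on $q,r,t$ are doing the work), but the valuation bookkeeping you identify as the crux is exactly what the paper's eigenspace-plus-central-$r$-element argument avoids.
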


	\begin{proof} 
		\begin{enumerate}
			\item Let $g=fd\in C_G(x)$ for some $f\in F$ and $d\in D$. So, $1=[fd,x]=[f,x]^d[d,x]$, and therefore $[f,x]=1$ and $[d,x]=1$. Since $x$ acts fixed-point-freely on $F$,  we deduce $f=1$. Moreover $c^x=c^q\neq c$, hence $C_G(x)=C_D(x)=\langle x\rangle$.
			\item Arguing as before, since $x^r$ acts fixed-point-freely on $F$, we deduce $C_G(x^r)=C_D(x^r)=D.$
			\item Since $C\unlhd D,$ $[F,C]$ is normalized by $D$. Notice that
			$[F,C]$ is precisely the eigenspace of $c$ corresponding to the eigenvalue $f.$ In particular $[F,C]$ is a 1-dimensional $\mathbb F$-subspace of $F.$
				Let $1\neq d \in D.$ If $r$ divides $|d|$, then $x^r \in \langle d \rangle,$ so
			$C_{[F,C]}(d)\leq C_{[F,C]}(x^r) \leq  C_{F}(x^r)=1.$ If $r$ does not divide $|d|,$ then $\langle d \rangle=\langle c \rangle,$ so 	$C_{[F,C]}(d)=C_{[F,C]}(c)=1.$\qedhere
		\end{enumerate}
	\end{proof}

	We denote by $ \B_i(\{x\})$ the set of vertices of $\Gamma (G)$ having distance at most $i$ from $x$. 
	By Corollary \ref{diametro} 
	 the diameter of  $\Gamma(G)$  is at most $4$. We will show that   diam$(\Gamma(G))=4$, by proving, in Lemma \ref{non3},  that $\B_3(\{x\}) \neq G \setminus{\{1\}}$.  
	
	\begin{lemma} $ \B_1(\{x^i\})=\langle x \rangle \setminus \{1\},$ for every $i$ prime to $r$. 
	\end{lemma}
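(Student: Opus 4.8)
The plan is to read $\B_1(\{x^i\})$ as the vertex $x^i$ together with its in\nobreakdash-neighbours, i.e.\ the vertices $y$ admitting a directed edge $y\mapsto x^i$ (equivalently $[y,{}_n x^i]=1$ for some $n$); this is the only reading consistent with Lemma~\ref{trivial}(2), since every translation $v\in F\le F(G)$ satisfies $x^i\mapsto v$ and so could not be excluded from an out\nobreakdash-ball. Throughout I fix $i$ coprime to $r$, so that $\langle x^i\rangle=\langle x\rangle$ and hence $C_G(x^i)=C_G(x)=\langle x\rangle$; moreover $x^r\in\langle x^i\rangle$ acts fixed-point-freely on $F$, so $C_F(x^i)\le C_F(x^r)=1$ and the operator $x^i-1$ is invertible on $F$. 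Since $C_F(\langle x\rangle)\le C_F(x^r)=1$ one checks $Z(G)=1$, whence $Z_\infty(G)=1$ and every non-identity element is a vertex of $\Gamma(G)$. The easy inclusion $\langle x\rangle\setminus\{1\}\subseteq\B_1(\{x^i\})$ is immediate: any $x^j\ne 1$ commutes with $x^i$, giving $x^j\mapsto x^i$. The whole content is the reverse inclusion: if $y\mapsto x^i$ then $y\in\langle x\rangle$.

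For this I would track the Engel sequence $b_m=[y,{}_m x^i]$ under the map $\phi(g)=[g,x^i]$, exploiting that $\phi$ is compatible with the projection $G\to G/F\cong D$, so that $\overline{b_m}=[\,\overline y,{}_m x^i\,]$ is the corresponding sequence in $D$. Writing $\overline y=c^a x^b$ and using $c^{x^i}=c^{q^i}$ together with $[x^b,x^i]=1$, a short computation gives $[\,\overline y,{}_m x^i\,]=c^{\,a q^b (q^i-1)^m}$ for every $m\ge 1$. Since $t\mid (q^r-1)/(q-1)$ but $t\nmid q-1$, the multiplicative order of $q$ modulo $t$ is $r$, so $q^i-1$ is a unit modulo $t$ whenever $r\nmid i$; as $q^b$ is also a unit, $[\,\overline y,{}_m x^i\,]=1$ forces $t\mid a$, i.e.\ $a=0$. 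Thus if $\overline y\notin\langle x\rangle$ the images $\overline{b_m}$ are non-trivial for every $m$, so $b_m\ne 1$ and there is no edge $y\mapsto x^i$. Hence I may assume $\overline y=x^b$, that is $y=v x^b$ with $v\in F$.

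It remains to rule out $v\ne 0$. Here $[x^b,x^i]=1$ makes the sequence collapse into $F$ after one step: $b_1=[vx^b,x^i]=\bigl((x^i-1)v\bigr)^{x^b}=(x^i-1)(x^b v)\in F$, and since $[w,x^i]=(x^i-1)w$ for every $w\in F$, iterating gives $b_m=(x^i-1)^m(x^b v)$ for all $m\ge 1$. As $x^i-1$ and $x^b$ are invertible on $F$, this is non-zero for every $m$ whenever $v\ne 0$, so again $y\not\mapsto x^i$. Therefore $v=0$ and $y=x^b\in\langle x\rangle\setminus\{1\}$, completing the reverse inclusion and hence the equality.

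The main obstacle is organising the commutator iteration across the semidirect-product structure: the crucial observation is that $\phi=[\,\cdot\,,x^i]$ descends to $G/F$, so the question splits cleanly into a purely arithmetic computation in the metacyclic group $D$ (where the unit $q^i-1$ modulo $t$ does the work) and the fixed-point-free action on $F$ (where invertibility of $x^i-1$ does the work). Note that this direct argument exhibits $b_m\ne 1$ for \emph{all} $m$, so I never need to invoke the periodicity criterion of Lemma~\ref{trivial}(1).
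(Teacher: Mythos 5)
Your proof is correct, but it takes a genuinely different (and longer) route than the paper's. The paper's argument is a two-line minimality trick: if $[g,{}_n x^i]=1$ with $n$ minimal and $n>1$, then $s=[g,{}_{n-1}x^i]$ lies in $G'\cap C_G(x^i)\le FC\cap\langle x\rangle=1$ (since $G/FC$ is cyclic, so $G'\le FC$, and $|FC|=q^{2r}t$ is coprime to $|x|=r^2$), contradicting minimality; hence $n=1$ and $g\in C_G(x^i)=\langle x\rangle$. You instead compute the entire Engel sequence explicitly, splitting along $G\to G/F\cong D$: the formula $[\overline y,{}_m x^i]=c^{aq^b(q^i-1)^m}$ together with the fact that $q$ has order exactly $r$ modulo $t$ forces $\overline y\in\langle x\rangle$, and then invertibility of $x^i-1$ on $F$ (from $C_F(x^i)\le C_F(x^r)=1$) kills the fibre part. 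Your computations check out ($D=\langle c\rangle\rtimes\langle x\rangle$, so every element of $D$ is indeed $c^ax^b$; $q^i-1$ is a unit mod $t$ precisely because $t\nmid q-1$ and $r\nmid i$). What the paper's approach buys is brevity and robustness — it needs only $C_G(x^i)=\langle x\rangle$ and the coprimality of $|FC|$ with $|x|$, with no case analysis; what yours buys is more information, namely an explicit closed form for every term $[y,{}_m x^i]$ showing directly that the sequence never reaches the identity (so, as you note, the periodicity criterion of Lemma~\ref{trivial}(1) is never needed), and it makes transparent exactly where the hypotheses $t\nmid q-1$ and $\gcd(i,r)=1$ enter. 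Your preliminary remarks (the in-ball reading of $\B_1$, and $Z_\infty(G)=1$) are also correct and consistent with how the paper uses these sets in the subsequent lemmas.
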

	\begin{proof}Let $g \in  \B_1(\{x\}).$ Then $[g,_nx]=1$ for some positive integer 
		$n.$ Choose $n$ as small as possible.
		If $n>1,$ then let
		$s=[g,_{n-1}x].$ It follows $s \in G^\prime \cap C_G(x) = FC \cap \langle x \rangle=1,$ a contradiction with the minimality of $n.$ 	So $n=1,$ and $g\in C_G(x) =\langle x \rangle.$ The same arguments apply whenever $x^i$ is a generator of $\langle x \rangle$. 
	\end{proof}
	
	\begin{lemma} $ \B_2(\{x\})=D\setminus \{1\}.$
	\end{lemma}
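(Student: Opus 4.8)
The plan is to establish the two inclusions $D\setminus\{1\}\subseteq \B_2(\{x\})$ and $\B_2(\{x\})\subseteq D\setminus\{1\}$ separately. The whole argument hinges on a single structural fact already at our disposal: since $C_G(x^r)=D$, the element $x^r$ lies in $\langle x\rangle$ and is \emph{central} in $D$, while at the same time $x^r$ acts fixed-point-freely on $F$.

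For the inclusion $D\setminus\{1\}\subseteq\B_2(\{x\})$ I would use $x^r$ as a hub. Because $x^r\in Z(D)$, every $g\in D$ commutes with $x^r$, so $g\mapsto x^r$; and $x^r\mapsto x$, since $x^r$ and $x$ commute and $x^r\neq x$ (indeed $|x|=r^2$). Hence each $g\in D\setminus\{1\}$ reaches $x$ along the path $g\mapsto x^r\mapsto x$ of length at most $2$, which places it in $\B_2(\{x\})$.

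For the reverse inclusion I would unwind what it means for a vertex $g$ to lie in $\B_2(\{x\})$. Either $g\mapsto x$, in which case $g\in\B_1(\{x\})=\langle x\rangle\setminus\{1\}\subseteq D$ by the preceding lemma, or there is an intermediate vertex $h$ with $g\mapsto h\mapsto x$. The relation $h\mapsto x$ forces $h\in\B_1(\{x\})=\langle x\rangle\setminus\{1\}$, so it is enough to show that every in-neighbour of a nontrivial power of $x$ already lies in $D$. Since $\langle x\rangle$ is cyclic of order $r^2$, such a power $h$ is either a generator of $\langle x\rangle$ or an element of order $r$. If $h$ generates $\langle x\rangle$, then $h=x^i$ with $i$ prime to $r$ and the preceding lemma gives $\{g:g\mapsto h\}\subseteq\B_1(\{x^i\})=\langle x\rangle\setminus\{1\}\subseteq D$. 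If instead $h$ has order $r$, then $\langle h\rangle=\langle x^r\rangle$ is the unique subgroup of order $r$ of $\langle x\rangle$, so $C_G(h)=C_G(x^r)=D$ and $C_F(h)=C_F(x^r)=1$.

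The only real computation, and the step I expect to be delicate, is the order-$r$ case. Here I would observe that the image of $h$ is central in $G/F\cong D$, so $[g,h]\in F$ for every $g\in G$, and hence $[g,{}_n h]\in F$ for all $n\ge 1$. On the abelian group $F$ the map $\theta\colon u\mapsto[u,h]$ is an endomorphism with $\ker\theta=C_F(h)=1$, hence a bijection, and $[g,{}_n h]=\theta^{\,n-1}([g,h])$ for every $n\ge 1$. Therefore $[g,{}_n h]=1$ for some $n\ge 1$ forces $[g,h]=1$, i.e.\ $g\in C_G(h)=D$, so no vertex outside $D$ can have an edge into such an $h$. Combining the two cases yields $\B_2(\{x\})\subseteq D$, and since $1$ is not a vertex we conclude $\B_2(\{x\})=D\setminus\{1\}$, as desired.
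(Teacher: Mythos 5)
Your proof is correct, and its skeleton coincides with the paper's: both reduce, via the preceding lemma $\B_1(\{x\})=\langle x\rangle\setminus\{1\}$, to showing that any $g$ with $g\mapsto h$ for some nontrivial $h\in\langle x\rangle$ already lies in $D$, and both dispose of the generators of $\langle x\rangle$ by quoting that lemma. The difference is in how the order-$r$ case is closed. The paper runs a minimal-counterexample descent: it takes the least $m$ with $[g,{}_m x^r]\in D$, writes $w=[g,{}_{m-1}x^r]=fd$ with $f\in F$, $d\in D$, and from $[w,x^r]=[f,x^r]^d[d,x^r]\in D$ extracts $[f,x^r]\in D\cap F=1$, whence $f\in C_F(x^r)=1$, a contradiction. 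You instead observe that $h$ is central modulo $F$ (since $C_G(x^r)=D$ gives $x^r\in Z(D)$), so every iterated commutator $[g,{}_n h]$ with $n\ge 1$ lies in $F$, and that $u\mapsto[u,h]$ is an endomorphism of the abelian group $F$ with kernel $C_F(h)=1$, hence a bijection; thus $[g,{}_n h]=1$ forces $[g,h]=1$ and $g\in C_G(h)=D$. Your version avoids the induction on $m$ and treats all nontrivial elements of $\langle x^r\rangle$ uniformly, whereas the paper writes the computation only for $x^r$ itself (the same argument applies to any generator of $\langle x^r\rangle$, which is what is actually needed). You also spell out the easy inclusion $D\setminus\{1\}\subseteq\B_2(\{x\})$ via the hub $x^r\in Z(D)$, which the paper leaves implicit.
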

	\begin{proof}By the previous lemma, $ \B_2(\{x\}) \subseteq\langle x \rangle \cup  \B_1(\{x^r\}).$
	 Assume, by contradiction,  that there exists an element  $g \in  \B_1(\{x^r\})$ that does not belong to $D=C_G(x^r)$. 
	  Then $[g,_nx^r]=1$ for some $n \ge 2$ and $[g,_{n-1}x^r] \in C_G(x^r)=D$. 
		 Choose the smallest $m$ with the property that $[g,_mx^r]\in D$ and note that  $m \ge 1$. 
		 Let us set $w=[g,_{m-1}x^r].$ 
		  So, $w \notin D$ while $[w,x^r] \in D$. 
		 Write $w=fd \in G=FD$, for some $f \in F$ and $d \in D$. 
		 Then $[w,x^r]=[f,x^r]^d[d,x^r] \in D$ implies $[f,x^r] \in D \cap F=1$. So $f \in C_G(x^r) \cap F=1$ and hence $w \in D$, a contradiction.  
	\end{proof}

\begin{lemma}\label{non3} $[F,C] \setminus \{1\} \not\subseteq  \B_3(\{x\}).$
\end{lemma}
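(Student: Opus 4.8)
The plan is to read off $\B_3(\{x\})$ from the second ball $\B_2(\{x\})=D\setminus\{1\}$, which the preceding lemma has just computed, and then to kill the only possibility by the Frobenius structure of $[F,C]D$. Recall that $\B_i(\{x\})$ records the vertices from which $x$ is reached by a directed path of length at most $i$, so the relevant edge at a vertex $u$ is the \emph{outgoing} one. First I would observe that any $u\in[F,C]\setminus\{1\}$ lies in $F$, and since $F\cap D=1$ we have $u\notin D=\B_2(\{x\})\cup\{1\}$; hence $u\notin\B_0,\B_1,\B_2$. Therefore the only way $u$ could belong to $\B_3(\{x\})$ is that $d(u,x)=3$, in which case the first edge of a shortest path from $u$ to $x$ runs to some vertex $v$ with $d(v,x)\le 2$, i.e. $u\mapsto v$ for some $v\in\B_2(\{x\})=D\setminus\{1\}$. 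Thus it suffices to prove that \emph{no} nontrivial element of $[F,C]$ has a directed edge into $D\setminus\{1\}$.

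This is where the Frobenius property enters. As shown above, $[F,C]$ is a nontrivial subgroup of $F$ normalized by $D$, every nontrivial element of $D$ acts fixed-point-freely on $[F,C]$, and $H:=[F,C]D$ is a Frobenius group with kernel $[F,C]$ and complement $D$. Since $[F,C]$ and $D$ are subgroups of $G$, every iterated Engel word $[u,{}_n v]$ with $u\in[F,C]$ and $v\in D$ is computed identically in $H$ and in $G$; consequently an edge $u\mapsto v$ in $\Gamma(G)$ is the same thing as an edge in $\Gamma(H)$. Now I would apply Lemma~\ref{frob-graph} to the Frobenius group $H$ with kernel $K=[F,C]$: for every nontrivial $u\in[F,C]$ there is no $g\in H\setminus[F,C]$ with $u\mapsto g$. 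As $D\setminus\{1\}\subseteq H\setminus[F,C]$ (an element of $D$ lies in $[F,C]\subseteq F$ only if it is trivial), this gives $u\not\mapsto v$ for every $v\in D\setminus\{1\}$.

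Combining the two steps, no $u\in[F,C]\setminus\{1\}$ has an edge into $\B_2(\{x\})$ and none lies in $\B_2(\{x\})$, so $[F,C]\setminus\{1\}$ is in fact disjoint from $\B_3(\{x\})$; since $[F,C]\ne 1$ this yields $[F,C]\setminus\{1\}\not\subseteq\B_3(\{x\})$, slightly more than the assertion, and hence (together with Corollary~\ref{diametro}) that $\diam\Gamma(G)=4$.

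I do not expect a real obstacle: the argument is bookkeeping about directed distances followed by a direct citation of Lemma~\ref{frob-graph}. The only points that need care are the orientation of the balls $\B_i(\{x\})$ (they collect paths \emph{terminating} at $x$, so the edge used at $u$ is $u\mapsto v$, not $v\mapsto u$) and the remark that the Engel condition $[u,{}_n v]=1$ may be tested inside the subgroup $H=[F,C]D$ rather than in all of $G$, which is precisely what makes the Frobenius lemma applicable.
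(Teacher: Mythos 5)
Your proof is correct and follows essentially the same route as the paper: use $\B_2(\{x\})=D\setminus\{1\}$ to reduce the claim to the non-existence of an edge from a non-trivial element of $[F,C]$ into $D\setminus\{1\}$, and then rule that out via Lemma~\ref{frob-graph} applied to the Frobenius group $[F,C]D$. The extra details you supply (the orientation of the balls, and the observation that the Engel words can be evaluated inside the subgroup $[F,C]D$) are exactly the points left implicit in the paper's two-line argument.
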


\begin{proof}By the previous lemmas, if $y \in  \B_3(\{x\}) \cap [F,C]$, then
	$y \mapsto d,$ for some non-trivial element $d\in D.$ But, by Lemma \ref{frob-graph}, this is impossible since $[F,C]D$ is a Frobenius group.
\end{proof}

\section{Almost simple groups}	\label{sec:5}

\subsection{Alternating group}\label{alterni}
In this subsection we investigate the strong connectivity of the alternating groups. One of the tool that we can use is the knowledge of the connected components of the prime graph of a finite simple group \cite{wi}.  Recall that the prime graph of a finite group $G$ is the graph whose vertices are the primes dividing the order of $G$ and where two vertices $p$ and $q$ are joined by an edge if and only if $G$ contains an element of order $p\cdot q.$

\begin{lemma}\label{3ciclo}
If $n\geq 6,$ then
$$(1,3,5)(1,\dots,n)(1,3,5)^{-1}(1,\dots,n)^{-1}=(1,3,5)(2,n,4).$$
\end{lemma}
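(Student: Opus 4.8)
The plan is to verify this identity by a direct computation, tracking the image of each point. Write $\sigma = (1,3,5)$ and $\tau = (1,2,\dots,n)$, so that the left-hand side is the commutator $\sigma\tau\sigma^{-1}\tau^{-1}$. I would adopt the convention that permutations act on the right, writing $k^{\sigma\tau} = (k^{\sigma})^{\tau}$, so that evaluating $k^{\sigma\tau\sigma^{-1}\tau^{-1}}$ means: apply $\sigma$, then $\tau$, then $\sigma^{-1}$, then $\tau^{-1}$. Before starting, I would record that the hypothesis $n \geq 6$ guarantees $n \notin \{1,2,3,4,5\}$, so that $\{1,3,5\}$ and $\{2,4,n\}$ are disjoint and the claimed right-hand side $(1,3,5)(2,n,4)$ is genuinely a product of two disjoint $3$-cycles supported on six distinct points.

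First I would dispose of the points not in the support of the right-hand side, namely those in $\{6,7,\dots,n-1\}$. For such a $k$ one has $k^{\sigma} = k$ and $k^{\tau} = k+1$, and since $k+1 \in \{7,\dots,n\}$ is fixed by $\sigma^{-1}$, finally $(k+1)^{\tau^{-1}} = k$; hence each such point is fixed by the left-hand side, in agreement with the right-hand side. (When $n = 6$ this set is empty and there is nothing to check here.) It then remains to evaluate the left-hand side on the six points $1,2,3,4,5,n$, which I would do one at a time; for instance the chain $1 \mapsto 3 \mapsto 4 \mapsto 4 \mapsto 3$ gives $1 \mapsto 3$, while $n \mapsto n \mapsto 1 \mapsto 5 \mapsto 4$ gives $n \mapsto 4$. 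Carrying this out for all six points yields exactly $1\mapsto 3$, $3 \mapsto 5$, $5 \mapsto 1$ together with $2 \mapsto n$, $n \mapsto 4$, $4 \mapsto 2$, which is precisely $(1,3,5)(2,n,4)$.

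Since the two sides then agree on every point of $\{1,\dots,n\}$, they are equal. This computation involves no genuine obstacle beyond careful bookkeeping: the only points needing attention are the two cycle-boundaries of $\tau$, that is, the behaviour at $1$ (where $\tau^{-1}$ sends $1$ to $n$) and at $n$ (where $\tau$ sends $n$ to $1$), together with the degenerate case $n=6$, in which the range $\{6,\dots,n-1\}$ is empty but the six special points still behave exactly as above. The one thing I would be most careful about is fixing the composition convention at the outset, since the opposite reading would produce the inverse permutation and spoil the identity.
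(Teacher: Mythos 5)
Your proof is correct: the pointwise verification under the right-action convention (apply $\sigma$, then $\tau$, then $\sigma^{-1}$, then $\tau^{-1}$) yields exactly $1\mapsto 3\mapsto 5\mapsto 1$ and $2\mapsto n\mapsto 4\mapsto 2$ with all of $\{6,\dots,n-1\}$ fixed, and your attention to the convention is well placed, since the paper (which follows GAP) indeed multiplies permutations this way. The paper states the lemma without proof as a routine computation, and your argument is precisely that omitted computation.
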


\begin{theorem}\label{alte} If $n\geq 5,$ then $\Gamma(A_n)$ is strongly connected if and only if $n\neq 5.$ Moreover the following holds:
\begin{itemize}
\item 	if $n\geq 7$, then $\Gamma_2(A_n)$ is strongly connected;
	\item $\Gamma_3(A_6)$ is strongly connected, but $\Gamma_2(A_6)$ is not.
\end{itemize}
\end{theorem}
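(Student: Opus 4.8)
The plan is to work with the sub-relation given by $\Gamma_2$-edges, which is legitimate because $[x,{}_2y]=1$ forces $[x,{}_ny]=1$ for all $n\ge 2$, so every edge of $\Lambda_2$ is an edge of $\Lambda$. Two devices produce such edges cheaply: if $[x,y]=1$ then $x\mapsto_2 y$ and $y\mapsto_2 x$, and if $y\in N_{A_n}(\langle x\rangle)$ then $y\mapsto_2 x$ by Lemma \ref{trivial}(3). I would fix the $3$-cycle $a_0=(1,2,3)$, let $\Omega$ be its strong component, and prove that for $n\ge 7$ every non-trivial element of $A_n$ lies in $\Omega$; this yields the stronger assertion that $\Gamma_2(A_n)$ is strongly connected, hence the first bullet, while the cases $n=5$ and $n=6$ are treated separately.

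First I would put all $3$-cycles in $\Omega$. For $n\ge 7$ any two $3$-cycles are joined by a path of commuting (disjoint) $3$-cycles: when their supports together meet at most four points one parks on a disjoint triple, e.g.\ $(1,2,3)\sim(5,6,7)\sim(1,2,4)$, and the remaining overlap patterns need one extra hop. Next, any $g$ fixing at least three points commutes with a $3$-cycle on three of those fixed points, so $g\in\Omega$. Since $g$ commutes with each of its powers, $g$ is mutually adjacent to every $g^k$; hence $g\in\Omega$ as soon as some non-trivial power of $g$ fixes at least three points. This reduces the whole question to the near-full-support elements all of whose non-trivial powers fix at most two points, i.e.\ essentially the $n$-cycles and the single $p$-cycles ($p$ prime) moving at least $n-2$ points, together with a handful of two-cycle types.

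The $n$-cycle $\sigma=(1,\dots,n)$ is precisely the object of Lemma \ref{3ciclo}: the commutator there equals $(1,3,5)(2,n,4)$, which is the product of $a=(1,3,5)$ with a $3$-cycle disjoint from it and therefore commutes with $a$; this gives the Engel-$2$ relation $\sigma\mapsto_2 a$, an out-edge from $\sigma$ into $\Omega$. For the in-direction I would use a multiplier automorphism: a generator $\mu$ of a suitable subgroup of $N_{A_n}(\langle\sigma\rangle)$, acting on the labels as $i\mapsto mi$, normalises $\langle\sigma\rangle$, so $\mu\mapsto_2\sigma$, and for a good choice of $m$ the element $\mu$ lies in $\Omega$ (either because $\gcd(m-1,n)\ge 3$ makes $\mu$ fix at least three points, or because a cycle factor of $\mu$ is a genuine $3$-cycle commuting with $\mu$); then $\Omega\ni\mu\mapsto_2\sigma$. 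The remaining prime-cycle types are handled by the same two mechanisms — an explicit Lemma \ref{3ciclo}-type commutator identity for the out-edge and a normaliser element of $\Omega$ for the in-edge.

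For the small cases: $A_5\cong\PSL(2,4)$ has $q=4$ even, so $\Gamma(A_5)$ is not strongly connected by the analysis of Subsection \ref{sconnessi} (the $5$-elements form a strong component admitting no out-edge to the other classes), which gives the ``only if''. For $n=6$ the commuting graph on $3$-cycles splits into components indexed by complementary triples, since the complement of a triple in six points is again a single triple; one checks that $\Gamma_2$-edges do not bridge these components, so $\Gamma_2(A_6)$ is not strongly connected, whereas the longer Engel words supply the missing connections, so $\Gamma_3(A_6)$ and $\Gamma(A_6)$ are strongly connected (verifiable through $A_6\cong\PSL(2,9)$ or a direct computation). I expect the principal obstacle to be the in-direction for the full-support elements, namely reaching the $n$-cycles and the long prime cycles \emph{from} $\Omega$: the commutator computation readily yields only the out-edge, and making the multiplier/normaliser argument uniform in $n$ requires a number-theoretic choice of $m$ together with control of the cycle type of $\mu$ (separate care is needed when no small-order multiplier with a usable cycle structure exists). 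The $n=6$ threshold, where one must show that $\Gamma_2$ genuinely fails while $\Gamma_3$ succeeds, is the other delicate point.
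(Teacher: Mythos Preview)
Your plan for $n\ge 7$ is workable but takes a different and heavier route than the paper. The paper does not anchor $\Omega$ at a $3$-cycle: it starts from the involutions, citing the known fact that all involutions of $A_n$ lie in a single connected component of the commuting graph, and then invokes the prime graph. By Williams' tables, for $n\ge 8$ either the prime graph of $A_n$ is connected (so every element meets $\Omega$ through one of its prime-power parts) or there is a unique isolated prime $p$ with $n\in\{p,p+1,p+2\}$; only the $p$-cycles then require special attention. The out-edge is Lemma~\ref{3ciclo}, exactly as you say, and the in-edge is the element of order $(p-1)/2$ in $N_{A_n}(\langle x\rangle)$, which lies in $\Omega$ automatically because all its prime divisors are below $p$. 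Your multiplier $\mu$ is this normaliser element written out concretely, but the prime-graph framing removes both the number-theoretic case analysis you anticipate and the ``handful of two-cycle types'' you leave unspecified. Note that your reduction via ``every non-trivial power fixes at most two points'' does not isolate only long single cycles: elements such as $(1,2,3,4)(5,6,7,8)\in A_8$ survive the filter and do not commute with any $3$-cycle, so they are not absorbed by your stated mechanism (they are, however, immediately absorbed once one knows $\Omega$ contains all involutions, which is the paper's starting point). The case $n=7$ is done by hand in the paper as well.

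Your $A_6$ argument misidentifies the obstruction. The $3$-cycles are \emph{not} trapped in complementary-triple components of $\Gamma_2(A_6)$: one has $(1,2,3)\mapsto_2(1,2)(3,4)$ (the commutator is $(1,3)(2,4)$, which commutes with $(1,2)(3,4)$) and $(2,3)(5,6)\mapsto_2(1,2,3)$ by the normaliser trick, while the double transpositions form a single commuting-graph component; hence all $3$-cycles, all involutions, all $(abcd)(ef)$ and all products of two $3$-cycles already share one strong $\Gamma_2$-component. The genuine failure is at the $5$-cycles: if $x=(1,2,3,4,5)$ then $x\mapsto_2 y$ forces $y\in\langle x\rangle$, so no $\Gamma_2$-out-edge leaves a Sylow $5$-subgroup, whereas $x\mapsto_3(2,4)(5,6)$ repairs this in $\Gamma_3$. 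For $A_5$ the paper gives the same direct argument (the $5$-cycle has no out-edge at all outside $\langle x\rangle$) rather than appealing to the later $\PSL(2,2^f)$ theorem.
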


\begin{proof} Let $G=A_n.$ All the elements of order $2$ belong to the same strong component of $\Gamma_1(G)$ (see  \cite{ij} or the first lines of the proof of \cite[Theorem 7.1]{mp}). Let  $\Omega$ be the strong component of $\Gamma_2(G)$ containing all the elements of order $2$ and let $\pi_1(G)$ be the connected component of the prime graph of $G$ containing 2. Clearly $\Omega$ contains also all  the elements whose order is divisible by a prime $q\in \pi_1(G),$ as every element commutes with its own powers.
	
	 Suppose $n\geq 8.$ By \cite[Table I]{wi} 
either the prime graph of $G$ is connected or there exists a prime $p$ such that $n\in \{p, p+1, p+2\}$ and $\pi(G)=\pi_1(G) \cup \{p\}.$ In the first case we can immediately conclude that $\Gamma_2(G)$ is connected. In the second case, we need to prove that if $|x|=p,$ then $x\in \Omega.$
By Lemma  \ref{3ciclo}, there exists a 3-cycle $y$ with $[x,y,y]=1$,  so	$x \twoheadrightarrow_2 y,$  with $y\in \Omega$. 
Moreover  there exists an element $z$ of order $(p-1)/2$ which normalizes $\langle x \rangle$. In particular $z\in \Omega$ and $z \mapsto_2 x.$

Assume  $n=7$. Again all the elements of even order of $G$ belong to the same strong component $\Omega$ of $\Gamma_2(G).$ A $3$-cycle commutes  with a double-transposition, so $\Omega$ contains all the $3$-cycles, as well as all the products of two disjoint $3$-cycles. 
 Moreover $(2,5)(3,4) \mapsto_2 (1,2,3,4,5)$ and $(1,2,3,4,5)\mapsto_2 (3,5)(6,7)$ so $\Omega$ contains all the $5$-cycles. Finally $(1,2,3,4,5,6,7)\mapsto_2 (1,4)(3,7)$ and $(2,3,5)(4,7,6)\mapsto_2 (1,2,3,4,5,6,7).$ So,  $\Gamma_2(A_7)$ is strongly connected

Now, let $n=6.$ If $x =(1,2,3,4,5)$ and $x\mapsto_2 y$, then $y \in \langle x \rangle$ and therefore $\Gamma_2(G)$ is not strongly connected. But $x\mapsto_3 (2,4)(5,6)$ and 
$(2,5)(3,4)\mapsto_3 x.$ So there is a strong component $\Omega$ of $\Gamma_3(G)$ containing all the elements of $G$ of even order and all the $5$-cycles. The other elements of $G$ are conjugated in $\Aut(G)$ to $(1,2,3)$ so to conclude it suffices to notice that $(2,3)(5,6)\mapsto_2 (1,2,3)$ and $(1,2,3)\mapsto_2 (1,2)(3,4).$

Finally assume $n=5.$ If $x =(1,2,3,4,5)$ and $x\mapsto y$, then $y \in \langle x \rangle$ and therefore $\Gamma(G)$ is not strongly connected.
\end{proof}

\subsection{Symmetric groups}

The aim of this subsection is to prove that if $n\geq 5,$ then $\Gamma_2(S_n)$ is strongly connected. When $n\geq 7,$ this could be immediately deduced combining Lemma \ref{norma} with Theorem \ref{alte}. We prefer to give an easy and self-contained proof.
\begin{lemma}\label{invol}
	If $x=(a,b)$ and $y=(c,d)$ are two transpositions in $S_n,$ then $x \twoheadrightarrow_1   y.$
\end{lemma}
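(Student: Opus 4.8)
The statement to prove is Lemma~\ref{invol}: for two transpositions $x=(a,b)$ and $y=(c,d)$ in $S_n$, we have $x \twoheadrightarrow_1 y$, i.e. there is a directed path from $x$ to $y$ in the commuting graph $\Gamma_1(S_n)$ (recall $\Gamma_1 = \Gamma_{\mathcal A}$ is the commuting graph). Since commuting is symmetric, a directed path here is just an undirected path through transpositions and other involutions that pairwise commute along consecutive steps.

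The plan is to reduce everything to the case of adjacent transpositions and then chain through a sequence of commuting involutions. First I would handle the trivial cases: if $\{a,b\}=\{c,d\}$ there is nothing to prove, and if $\{a,b\}\cap\{c,d\}=\emptyset$ then $x$ and $y$ are disjoint, hence commute, giving an edge $x\mapsto_1 y$ directly. The remaining case is when the two transpositions share exactly one point, say $x=(a,b)$ and $y=(a,c)$ with $b\neq c$. Here $x$ and $y$ do not commute, so I need an intermediary. Since $n\geq 5$ (as in the running hypothesis of this subsection), there exist at least two further points distinct from $a,b,c$; pick two of them, say $e,f$. Then the double transposition $z=(e,f)$ is disjoint from both $x$ and $y$, so $x$ commutes with $z$ and $z$ commutes with $y$, yielding the path $x\mapsto_1 z\mapsto_1 y$.

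The cleanest uniform approach, which I would actually write, is to observe that any two transpositions $x,y$ are both disjoint from some common transposition $z$ supported away from $\{a,b,c,d\}$, provided $n$ is large enough to furnish two spare points. Since $|\{a,b,c,d\}|\le 4$ and $n\ge 5$ might leave only one spare point, I would instead pick $z$ to be a double transposition when necessary, or more robustly take $z=(e,f)$ with $e,f\notin\{a,b,c,d\}$ when $n\ge 6$, and treat $n=5$ as a small separate check. Concretely, for $n\ge 6$ every pair of transpositions shares the common disjoint partner and the two-step path works immediately. For $n=5$, when $x,y$ overlap in one point the four points $a,b,c$ plus the relevant spare leave exactly one free point, which is not enough for a disjoint transposition $z$; here I would instead route through a $3$-cycle's worth of structure or simply exhibit an explicit commuting involution, e.g. if $x=(1,2)$, $y=(1,3)$ take $z=(4,5)$, which is disjoint from both, so the path $x\mapsto_1 z\mapsto_1 y$ still works since $4,5\notin\{1,2,3\}$.

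The main obstacle, such as it is, is purely bookkeeping on the number of spare points: I must ensure that for overlapping transpositions there always remain two points outside $\{a,b,c,d\}$ to build the disjoint involution $z$, and this holds precisely because the overlap means $|\{a,b,c,d\}|\le 3$, leaving at least $n-3\ge 2$ free points whenever $n\ge 5$. Thus in every case the directed distance from $x$ to $y$ is at most $2$, and since edges are symmetric the path is automatically a valid directed path in $\Gamma_1(S_n)$. No delicate commutator computation is needed; the entire content is the disjointness count, which is why I expect this lemma to be genuinely easy and to serve mainly as a base case for the subsequent strong-connectivity argument for $\Gamma_2(S_n)$.
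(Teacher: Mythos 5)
Your proof is correct and takes essentially the same route as the paper's one-line argument: either $[x,y]=1$ or there is a transposition $z=(e,f)$ disjoint from both supports (guaranteed for $n\ge 5$ since an overlap forces $|\supp(x)\cup\supp(y)|\le 3$), giving $x\mapsto_1 z\mapsto_1 y$. The only blemishes are cosmetic: you call $z=(e,f)$ a ``double transposition'' when it is a single transposition, and the digression about $n\ge 6$ versus $n=5$ is unnecessary since your final counting argument already covers all $n\ge 5$.
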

\begin{proof}It suffices to notice that  either $[x,y]=1$ or there exists a transposition $z=(e,f)$ with $[x,z]=[y,z]=1.$
\end{proof}

\begin{lemma}\label{real}
	Let $1 \neq x\in S_n.$ Write $x=\sigma_1 \cdots \sigma_r$ as product of disjoint cycles (including possibly cycles of length 1), in such a way that $|\sigma_1|\leq \dots \leq |\sigma_r|.$ If $|\sigma_r|\leq n-2,$ then there exists a transposition $u$ such that $x \twoheadrightarrow_1 u$ and $u \twoheadrightarrow_1 x.$
\end{lemma}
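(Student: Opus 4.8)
The plan is to exploit the fact, established in Lemma \ref{invol}, that all transpositions of $S_n$ lie in a single component of the commuting graph $\Gamma_1(S_n)$. Consequently it suffices to produce one transposition $u$ joined to $x$ by a path in $\Gamma_1(S_n)$; since commuting is a symmetric relation, the two required assertions $x\twoheadrightarrow_1 u$ and $u\twoheadrightarrow_1 x$ then hold simultaneously. The basic tool is the elementary observation that a transposition $(a,b)$ commutes with $x$ precisely when $x$ fixes the set $\{a,b\}$ setwise, i.e.\ when either both $a$ and $b$ are fixed points of $x$ or $(a,b)$ is one of the $2$-cycles $\sigma_i$ appearing in the disjoint-cycle decomposition of $x$.

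First I would dispose of the two easy configurations. If $x$ has at least two fixed points $c,d$, then $(c,d)$ commutes with $x$ and we take $u=(c,d)$. If some $\sigma_i$ has length $2$, then this transposition commutes with $x$, being a factor of the decomposition, and we take $u=\sigma_i$. In both cases $x$ is adjacent to a transposition in $\Gamma_1(S_n)$, and we are done.

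The remaining case is where $x$ has at most one fixed point and no $2$-cycle, so every nontrivial $\sigma_i$ has length at least $3$. Here the hypothesis $|\sigma_r|\le n-2$ does the decisive work: a single nontrivial cycle would force $n-|\sigma_r|\le 1$ fixed points, hence $|\sigma_r|\ge n-1$, contradicting the hypothesis; so $x$ has at least two cycles of length $\ge 3$. Fix one of them, $\sigma$. As a factor of $x$ it commutes with $x$, giving an edge $x\to\sigma$; and since a second cycle of length $\ge 3$ occupies at least three further points, $\supp(\sigma)$ has size at most $n-3$, so $\sigma$ fixes at least two points $c,d$. Then $(c,d)$ commutes with $\sigma$, and the path $x\to\sigma\to(c,d)$ shows $x\twoheadrightarrow_1(c,d)$; we take $u=(c,d)$.

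The only point requiring care, and the sole place the hypothesis enters, is the counting step that guarantees a second long cycle in the last case: this is exactly what prevents $\sigma$ from exhausting all but one point and thereby secures two fixed points on which to plant the transposition. (It is also why $n$-cycles and $(n-1)$-cycles, excluded by $|\sigma_r|\le n-2$, must be handled separately elsewhere.) Once a transposition $u$ is reached, Lemma \ref{invol} connects it to every other transposition, which is precisely what the ensuing proof of strong connectivity of $\Gamma_2(S_n)$ will invoke.
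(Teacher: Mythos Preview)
Your argument is correct. The only superfluous element is the opening reference to Lemma~\ref{invol}: you never actually use it, since the symmetry of the commuting relation already gives both directions $x\twoheadrightarrow_1 u$ and $u\twoheadrightarrow_1 x$ once a single path is found.

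Your route differs from the paper's in the case decomposition. The paper makes a single cut: set $y=\sigma_1\cdots\sigma_{r-1}$, the product of all cycles except the longest. If $y\neq 1$ then $x\to y\to u$ where $u$ is any transposition supported inside $\supp(\sigma_r)$; if $y=1$ then $x=\sigma_r$ has $n-|\sigma_r|\ge 2$ fixed points and one takes $u$ on two of them. You instead split into three cases (two fixed points; a $2$-cycle present; otherwise at least two cycles of length $\ge 3$) and in the last case use a single cycle $\sigma$ as the intermediate vertex rather than the full complementary product $y$. Both arguments use the hypothesis $|\sigma_r|\le n-2$ at exactly the same moment---to guarantee a second nontrivial object outside the longest cycle---but the paper's two-case version is shorter, while yours makes the role of the hypothesis more visibly explicit.
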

\begin{proof} Let $y=	\sigma_1 \cdots \sigma_{r-1}$. If $y \neq 1$, then set $u=(a,b)$ with $\{a,b\}\subseteq {\rm{supp}}(\sigma_r)$, so that 
	$[x,y]=1$ and $[y,u]=1$. Otherwise, take $u=(a,b)$ with $\{a,b\} \cap {\rm{supp}}(\sigma_r)=\emptyset.$
\end{proof}

\begin{lemma}\label{inverte}
	If $1 \neq x\in S_n,$ then there exists an involution $u$ such that $u \twoheadrightarrow_2 x.$
\end{lemma}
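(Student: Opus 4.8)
The plan is to split the argument according to the length of the longest cycle of $x$, so that Lemma~\ref{real} disposes of all but a handful of cycle types. Write $x=\sigma_1\cdots\sigma_r$ as a product of disjoint cycles (including fixed points) with $|\sigma_1|\le\cdots\le|\sigma_r|$. The key preliminary observation is that every edge of the commuting graph $\Gamma_1(S_n)$ is an edge of $\Gamma_2(S_n)$, since $[a,b]=1$ forces $[a,{}_2b]=1$; consequently any $\Gamma_1$-path is automatically a $\Gamma_2$-path.

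If $|\sigma_r|\le n-2$, I would simply invoke Lemma~\ref{real} to obtain a transposition $u$ with $u\twoheadrightarrow_1 x$. By the observation above this gives $u\twoheadrightarrow_2 x$, and $u$ is an involution, so this case is finished.

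The remaining elements are those with $|\sigma_r|\ge n-1$, and I would first note that any such $x$ is a single cycle: if $|\sigma_r|=n$ then $x$ is an $n$-cycle, while if $|\sigma_r|=n-1$ the leftover support has size one, so $x$ is an $(n-1)$-cycle with a single fixed point. Thus $x$ is a $k$-cycle $(a_1,\dots,a_k)$ with $k\ge n-1\ge 4$ (recall $n\ge 5$ throughout this subsection). For such an $x$ I would take the reflection involution $u$ given by $u(a_i)=a_{k+2-i}$ (subscripts read modulo $k$) and fixing every point outside the cycle. A direct check shows $u^2=1$ and $uxu^{-1}=x^{-1}$, so $u$ is a nontrivial involution normalizing $\langle x\rangle$; Lemma~\ref{trivial}(3) then yields $u\mapsto_2 x$, a $\Gamma_2$-path of length one. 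It is worth recording that these $u$ (transpositions or reflections) are genuine vertices of $\Gamma_2(S_n)$, since for $n\ge5$ the set $I_2(S_n)$ of universal vertices is trivial.

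I do not expect a serious obstacle here: the only point needing attention is the identification of the permutations falling outside the hypothesis of Lemma~\ref{real} — namely the full cycles of length $n$ and $n-1$ — together with the elementary verification that each such cycle is inverted by an explicit involution lying in its normalizer. Everything else reduces to the fact, already used repeatedly in this section, that normalizing $\langle x\rangle$ or commuting with $x$ both produce edges of $\Gamma_2$.
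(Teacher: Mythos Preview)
Your argument is correct, but it takes a noticeably longer route than the paper's. The paper disposes of the lemma in a single sentence: since every permutation in $S_n$ is conjugate to its inverse, $N_{S_n}(\langle x\rangle)$ has even order and hence contains an involution $u$; Lemma~\ref{trivial}(3) then gives $u\mapsto_2 x$. This works uniformly for every nontrivial $x$, with no case split and no appeal to Lemma~\ref{real}. Your approach instead invokes Lemma~\ref{real} for elements whose longest cycle has length at most $n-2$, and then treats the long cycles by the explicit reflection $u(a_i)=a_{k+2-i}$, which is exactly an instance of the ``conjugate to its inverse'' idea spelled out concretely. What the paper's argument buys is brevity and generality (it applies verbatim to any group in which all elements are real); what your argument buys is an explicit description of the involution $u$ in every case and, in the Lemma~\ref{real} branch, the slightly stronger conclusion that one can take $u$ to be a transposition already adjacent to $x$ in $\Gamma_1$.
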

\begin{proof}
	Note that	$x$ is conjugate to $x^{-1}$ and consequently $N_G(\langle x\rangle)$ contains an involution.
\end{proof}

\begin{lemma}\label{conto}If $m\geq 4,$ then
	$$	(1,3)\cdot (1,2,3,\dots, m)\cdot
	(1,3)\cdot (1,2,3,\dots, m)^{-1}=(1,3)(2,m)$$ and consequently
	$[ (1,2,3,\dots, m),(1,3),(1,3)]=1.$
\end{lemma}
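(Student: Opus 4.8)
The plan is to prove the displayed equality by a direct computation in the symmetric group and then read off the Engel-word consequence. Write $g=(1,2,\dots,m)$ and $t=(1,3)$, so that the asserted identity reads $t\,g\,t\,g^{-1}=(1,3)(2,m)$. The slickest evaluation groups the last three factors and uses the fact that conjugating a transposition merely relabels its two moved points: since $g^{-1}$ sends $1\mapsto m$ and $3\mapsto 2$, we get $g\,t\,g^{-1}=(m,2)=(2,m)$, whence $t\,g\,t\,g^{-1}=t\cdot(2,m)=(1,3)(2,m)$. Here the hypothesis $m\ge 4$ is exactly what guarantees $\{1,3\}\cap\{2,m\}=\emptyset$, so that $(1,3)$ and $(2,m)$ are disjoint and the product is a genuine involution of cycle type $2^2$. (Equivalently, one may simply track the image of each of $1,\dots,m$ through the composite $t,g,t,g^{-1}$: the only non-fixed points are the transpositions $1\leftrightarrow 3$ and $2\leftrightarrow m$, the cases $2$ and $m$ being precisely where the cyclic wrap-around of $g$ at $m\mapsto 1$ enters.)

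For the consequence $[g,{}_2 t]=[[g,t],t]=1$, I would identify $[g,t]$ as a product of two disjoint transpositions, one of which is $t$ itself. Indeed $[g,t]=g^{-1}t^{-1}gt=(g^{-1}tg)\,t$, and $g^{-1}tg$ is again the transposition obtained from $t=(1,3)$ by relabelling its points via $g$, namely $(1^g,3^g)=(2,4)$. Thus $[g,t]=(2,4)(1,3)=(1,3)(2,4)$, a product of two disjoint transpositions (again $m\ge 4$ keeps $(2,4)$ and $(1,3)$ disjoint). Since $(2,4)$ is disjoint from $t$ and $t$ commutes with itself, the element $[g,t]$ commutes with $t$, which is exactly the statement $[[g,t],t]=1$.

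What makes this robust is that the conclusion is insensitive to the precise commutator/composition conventions: in either sign convention $[g,t]$ comes out as the product of $t=(1,3)$ with the transposition to which $g^{\pm1}$ conjugates $t$ (either $(2,4)$ or $(2,m)$), and any product of $t$ with a transposition disjoint from it commutes with $t$. The only real care needed is therefore bookkeeping: choosing the same composition order (here left-to-right) and commutator sign used elsewhere in the paper, so that the right-hand side of the displayed formula comes out as the stated $(1,3)(2,m)$ rather than $(1,3)(2,4)$. I do not anticipate any genuine obstacle, since the whole statement is an explicit finite calculation in $S_m$ and the role of the hypothesis $m\ge 4$ is solely to keep the relevant transpositions disjoint.
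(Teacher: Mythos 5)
Your proposal is correct: the conjugation computation $g\,t\,g^{-1}=t^{g^{-1}}=(2,m)$ and the identification $[g,t]=t^{g}\,t=(2,4)(1,3)$ (which visibly commutes with $t=(1,3)$ once $m\ge 4$ keeps the transpositions disjoint) establish both the displayed identity and the consequence $[g,t,t]=1$. The paper gives no proof at all for this lemma, treating it as a routine check, and your calculation — carried out in the right-action convention that the paper's right-hand side $(1,3)(2,m)$ presupposes — is exactly the verification that is implicitly intended.
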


\begin{theorem}If $n\geq 5,$ then $\Gamma_2(S_n)$ is strongly connected.
\end{theorem}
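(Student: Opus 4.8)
The goal is to show that for $n\geq 5$, the graph $\Gamma_2(S_n)$ is strongly connected. My plan is to mimic the strategy already used for the alternating groups: fix a single strong component $\Omega$ anchored on the involutions, and then show that every non-trivial element of $S_n$ can be routed into $\Omega$ in both directions along $\twoheadrightarrow_2$ paths.

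First I would establish that all the transpositions lie in one common strong component $\Omega$ of $\Gamma_2(S_n)$. This is exactly the content of Lemma \ref{invol}, since $x\twoheadrightarrow_1 y$ trivially implies $x\twoheadrightarrow_2 y$ (an edge of $\Lambda_1$ is an edge of $\Lambda_2$). Because commuting elements are mutually adjacent in $\Gamma_2$, the component $\Omega$ will automatically absorb every element that commutes with some transposition, and in particular every involution.

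Next I would handle the bulk of the elements using the distance-two reachability lemmas already proved. For a generic $1\neq x\in S_n$ whose longest cycle has length at most $n-2$, Lemma \ref{real} gives a transposition $u$ with $x\twoheadrightarrow_1 u$ and $u\twoheadrightarrow_1 x$; since $u\in\Omega$, such an $x$ lies in $\Omega$. Combined with Lemma \ref{inverte}, which produces an involution $u$ with $u\twoheadrightarrow_2 x$ for \emph{every} non-trivial $x$ (using that $x$ is conjugate to $x^{-1}$, so $N_{S_n}(\langle x\rangle)$ contains an involution, and Lemma \ref{trivial}(3)), I already get the incoming direction $\Omega\to x$ for all $x$. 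So the only work that remains is the outgoing direction $x\to\Omega$ for the exceptional elements whose support is too large to apply Lemma \ref{real}, namely the $n$-cycles and, when $n$ is even, the products of two disjoint $(n/2)$-cycles and similar long-support elements.

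The main obstacle is therefore the single-orbit or near-single-orbit elements: an $n$-cycle $x$ has $N_{S_n}(\langle x\rangle)$ of order $n\varphi(n)$, so I must find some element $z\in\Omega$ (say a transposition or a short element already placed in $\Omega$) with $x\twoheadrightarrow_2 z$. Here Lemma \ref{conto} is the key explicit computation: it exhibits the transposition $(1,3)$ with $[(1,2,3,\dots,m),(1,3),(1,3)]=1$, i.e. $(1,2,\dots,m)\mapsto_2 (1,3)$, routing any full cycle directly to an involution in $\Omega$. I would then dispose of the remaining large-support even-permutation cases (products of two long cycles, etc.) by either applying Lemma \ref{conto} to one cycle while a fixed transposition commutes with the rest, or by a short ad hoc $\twoheadrightarrow_2$ argument as was done for small $n$ in the proof of Theorem \ref{alte}; and I would separately check the few values of $n$ (around $n=5,6$) where the generic lemmas leave gaps, verifying connectivity by direct inspection of the cycle types. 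Assembling these pieces, every vertex both reaches and is reached from $\Omega$, so $\Omega$ is all of $\Gamma_2(S_n)$ and the graph is strongly connected.
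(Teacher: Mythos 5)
Your proposal is correct and follows essentially the same route as the paper: Lemmas \ref{invol} and \ref{real} place every element whose longest cycle has length at most $n-2$ in one strong component $\Omega$, and the long cycles are routed out of $\Omega$ via Lemma \ref{conto} and into $\Omega$ via Lemma \ref{inverte}. One small correction: the exceptional set left over from Lemma \ref{real} consists exactly of the $(n-1)$-cycles and $n$-cycles (the hypothesis is on the length of the longest cycle, not on the size of the support), so products of two disjoint $(n/2)$-cycles are already covered and need no separate treatment, while both exceptional cycle types are handled uniformly by Lemma \ref{conto} (valid for any $m\geq 4$) — no ad hoc checks at $n=5,6$ are required.
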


\begin{proof}
	By Lemmas \ref{invol} and \ref{real}, all the elements of $G$ that are neither a $(n-1)$-cycle nor a $n$-cycle belong to the same 
	strong component of $\Gamma_1(G)$, and consequently, to the same strong component, say $\Omega,$ of $\Gamma_2(G).$ 
	Suppose
	that $x$ is an $r$-cycle, with $r \in \{n-1,n\}.$ 	By Lemma \ref{conto},
	$x \twoheadrightarrow_2 u$ for a suitable involution $u.$ Combining this with Lemma \ref{inverte}, we conclude $x\in \Omega.$
\end{proof}

\subsection {Simple groups whose Engel graph is not strongly connected}\label{sconnessi}
It follows from the previous subsections that there are infinitely many simple and almost simple groups with a strongly connected Engel graph. Now our aim is to prove that there exist also infinitely many simple groups whose Engel graph is not strongly connected. We first need a preliminary lemma.

\begin{lemma}\label{NC}
	Let $y$ be contained in a subgroup $ K \le G$ with the property that 
	\begin{enumerate}
		\item $N_G(K)=K$;
		\item $y\in K^g$ if and only if $K^g=K$.
	\end{enumerate}
	Then $x\notin K$ implies $x\not\mapsto y$.
\end{lemma}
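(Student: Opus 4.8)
The claim is a contrapositive statement about when $x \not\mapsto y$ in the graph $\Lambda(G)$, i.e.\ when $[x,{}_n y] \neq 1$ for every $n$. The two hypotheses say that $K$ is self-normalizing and that $y$ lies in no conjugate of $K$ other than $K$ itself. My plan is to prove the contrapositive: assuming $x \mapsto y$, I will show $x \in K$.

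So suppose $x \mapsto y$, meaning $[x,{}_n y] = 1$ for some $n$. The key observation is to track the subgroup $K^{\langle y\rangle}$-structure of the iterated commutators. Since $y \in K$ and $K$ is a group, conjugation by $y$ preserves $K$; more to the point, I would look at the descending chain of commutators $x_0 = x$, $x_{i+1} = [x_i, y]$ and argue by downward induction from the fact that $x_n = 1 \in K$. The mechanism I expect to work is: if $x_{i+1} = [x_i, y] = x_i^{-1} x_i^{y} \in K$, then I want to deduce $x_i \in K$. This is where hypothesis (2) should enter. Writing $x_{i+1} = x_i^{-1}(y^{-1} x_i y)$, membership of this element in $K$ together with $y \in K$ should let me relate $x_i$ to the normalizer or to a conjugate of $K$.

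The main obstacle is making the inductive step rigorous: from $[x_i,y]\in K$ one does not immediately get $x_i \in K$ in a general group. The right way to phrase it is probably via the conjugate $K^{x_i}$ or $K^{x_i^{-1}}$. The clean argument likely runs as follows. The final commutator $x_n=1$ lies in $K$, and since $y \in K$ we have $y \in K \cap K^{x_{n-1}^{-1}}$ type relations propagating upward; by hypothesis (2), $y \in K^g$ forces $K^g = K$, so each conjugate that contains $y$ must equal $K$, which pins down the conjugating element to lie in $N_G(K) = K$ by hypothesis (1). So the two hypotheses are used in tandem: (2) collapses ``$y \in K^g$'' to ``$K^g = K$,'' and then (1) collapses ``$K^g = K$'' to ``$g \in K$.'' I would carry out the induction so that at each stage the relevant conjugating element is shown to normalize $K$, hence lies in $K$, hence $x_i \in K$, terminating at $x_0 = x \in K$.

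I would write the final form as a downward induction on $i$ proving $x_i \in K$ for all $0 \le i \le n$, with base case $x_n = 1 \in K$ and the inductive step using that $x_{i} y x_{i}^{-1}$ (or the appropriate conjugate of $y$) lies in $K$ and invoking (2) then (1); the conclusion $x = x_0 \in K$ is exactly the contrapositive of the statement. The only genuinely delicate point to get right is the exact bookkeeping of which conjugate of $y$ or of $K$ one controls at each step, so that hypothesis (2) applies to an element that is genuinely a conjugate of $y$ lying in a conjugate of $K$.
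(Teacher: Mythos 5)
Your proposal is correct and is essentially the paper's own argument: the paper takes the minimal $m$ with $[x,{}_m y]\in K$ (equivalently, your downward induction from $[x,{}_n y]=1\in K$) and uses exactly the mechanism you describe, namely that $[w,y]=y^{-w}y\in K$ together with $y\in K$ gives $y^{-w}\in K$, hence $y\in K^{w^{-1}}$, hence $K^{w^{-1}}=K$ by (2) and $w\in N_G(K)=K$ by (1). The bookkeeping you flag as delicate resolves in precisely this one line, so there is no gap.
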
 
\begin{proof} 
	Note that conditions (1) and (2) imply that $y^{-g} \in K$ if and only if $g \in K$.
	Assume $x \mapsto y$, so that $[x,_n y]=1$ for some $n \geq 1$, and 
	let $m$ be the minimal integer such that $[x,_m y]\in K$. 
	Assume $m >0$. 
	 We set $w=[x,_{m-1}y]$. 
	 Then  from $[x,_m y]=[ w ,y]=y^{-w} y \in K$ and $y \in K$, we deduce that 
	$y^{-w} \in K$. So,  $w \in K$, against the minimality of $m$. It follows that $m=0$, that is $x \in K$.  
\end{proof}

\begin{theorem}Let $G=\PSL(2,q)$ with $q=2^f.$ Then $\Gamma(G)$ is not strongly connected. 
\end{theorem}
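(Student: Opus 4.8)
The plan is to exploit the structure of $G=\PSL(2,q)$ with $q=2^f$ by finding a suitable subgroup $K$ to which Lemma \ref{NC} applies, thereby isolating a set of vertices that no outside vertex can point to. Recall that for $q$ even, the subgroup structure of $\PSL(2,q)$ is well understood: the Sylow $2$-subgroup $K$ is elementary abelian of order $q$ (the group of unipotent upper-triangular matrices), and its normalizer is a Borel subgroup $B=K\rtimes C$, where $C$ is cyclic of order $q-1$ acting on $K$ as a Frobenius complement. In fact $B$ is a Frobenius group with kernel $K$. My first step is to record these standard facts, together with the key point that for $q$ even the involutions of $G$ are exactly the nontrivial elements of the Sylow $2$-subgroups, and distinct Sylow $2$-subgroups intersect trivially (that is, the Sylow $2$-subgroups form a trivial-intersection set).

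Next I would verify the two hypotheses of Lemma \ref{NC} for $K$ equal to a fixed Sylow $2$-subgroup. For condition (1), I must show $N_G(K)=K$ is false as stated for the Sylow subgroup itself, so instead the natural choice is to take $K=B$, the Borel subgroup: here $N_G(B)=B$ holds since Borel subgroups are self-normalizing maximal subgroups, and this is standard. However, the cleaner route is to let $y$ be an involution and take $K$ to be the Sylow $2$-subgroup containing $y$. Then condition (2), namely that $y\in K^g$ iff $K^g=K$, is precisely the trivial-intersection property: since every involution lies in a unique Sylow $2$-subgroup (because distinct Sylow $2$-subgroups meet trivially), $y\in K^g$ forces $K^g$ to be that unique Sylow $2$-subgroup, i.e. $K^g=K$. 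For condition (1) with $K$ a Sylow $2$-subgroup I would instead observe that $N_G(K)=B\neq K$, so Lemma \ref{NC} does not apply directly to $K$; the fix is to take $K=B$ and $y$ a \emph{nontrivial unipotent} element, using that $B$ is self-normalizing and that a unipotent $y$ lies in $B^g$ only when $B^g=B$ (again by the trivial-intersection behavior of Sylow $2$-subgroups, since the unipotent elements of $B$ are exactly the involutions lying in the unique Sylow $2$-subgroup of $B$).

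With Lemma \ref{NC} in hand, the conclusion is that if $y$ is an involution (a nontrivial unipotent element) and $x\notin B$, then $x\not\mapsto y$. To deduce that $\Gamma(G)$ is not strongly connected, I would exhibit a vertex $x$ outside $B$ and argue that $x$ cannot reach $y$ by a directed path: indeed, the final edge of any directed path into $y$ would be an edge $w\mapsto y$, which by the above forces $w\in B$, and then I track back, showing no directed path from a semisimple element lying outside every conjugate of $B$ can terminate at $y$. Concretely, choosing $x$ to be an element of the cyclic torus of order $q+1$ (which is not contained in any Borel subgroup), I would conclude $x\not\twoheadrightarrow y$, so $y$ and $x$ lie in different strong components.

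The main obstacle will be the bookkeeping needed to pass from the single-edge statement $x\not\mapsto y$ of Lemma \ref{NC} to the non-existence of an entire directed \emph{path} $x\twoheadrightarrow y$. The lemma only blocks direct edges into $y$ from outside $K$; to block paths I must ensure that the \emph{only} vertices $w$ with $w\mapsto y$ already lie in $K$ (equivalently in $B$), and then argue that $x$ itself, together with all of its predecessors along any hypothetical path, stays outside $B$. This requires the trivial-intersection property to be used carefully and the correct choice of $y$ as a unipotent element and $K=B$ so that both hypotheses of Lemma \ref{NC} genuinely hold. Getting this interface exactly right — in particular checking that $N_G(B)=B$ and that unipotent elements satisfy the membership condition (2) — is the delicate part; the remaining verifications about the subgroup structure of $\PSL(2,2^f)$ are routine.
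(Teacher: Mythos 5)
Your setup for the involution case is exactly right and matches the paper: you correctly see that the Sylow $2$-subgroup $P$ itself fails condition (1) of Lemma \ref{NC}, that the right choice is $K=B=N_G(P)\cong P\rtimes C_{q-1}$, and that condition (2) for an involution $y$ follows from the trivial-intersection property of the Sylow $2$-subgroups. With $x$ of order dividing $q+1$ (so $x\notin B$, since $|B|=q(q-1)$ is coprime to $q+1$), Lemma \ref{NC} gives $x\not\mapsto y$ for every involution $y$.

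The genuine gap is in passing from this single-edge statement to the non-existence of a directed \emph{path} from $x$ to $y$. Your plan --- ``the only $w$ with $w\mapsto y$ lie in $B$, then track back and show all predecessors stay outside $B$'' --- cannot work as stated: the predecessors of $y$ \emph{do} lie in $B$, and the predecessors of an element $w\in B$ of order dividing $q-1$ lie in $N_G(\langle w\rangle)\cong D_{2(q-1)}$, which is not contained in $B$, so the reachable-backwards set does not stay inside one Borel subgroup. To close the argument you must also control the in-neighbours of the elements of order dividing $q-1$, and this requires a \emph{second} application of Lemma \ref{NC} with $K=N_G(\langle y\rangle)\cong D_{2(q-1)}$ (a maximal dihedral subgroup, again self-normalizing, with $\langle y\rangle$ its unique cyclic subgroup of order $|y|$, so condition (2) holds). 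This is precisely case (b) of the paper's proof, which you omit. Once both cases are in place, the clean way to finish is the paper's: every out-neighbour of a nontrivial element of order dividing $q+1$ again has order dividing $q+1$ (the three possible orders in $\PSL(2,2^f)$ being $2$, divisors of $q-1$, and divisors of $q+1$), so the set of such elements is closed under directed reachability and no element of it can reach an involution. Without the dihedral case your argument blocks only paths of length one.
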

\begin{proof} Note that the order of an element of $G$ can be:  
	\begin{itemize}
		\item 2;
		\item a divisor of $q-1;$
		\item a divisor of $q+1.$
	\end{itemize}
	Now let $x\neq 1$ be an element whose order divides $q+1.$ We are going to prove that if $x \mapsto y$, then $|y|$ divides $q+1.$	Let $y$ be a nontrivial element of $G$. 
	
	\noindent (a)  
	Assume $|y|=2$ and let  
	 $P$ be a Sylow 2-subgroup of $G$, with $y \in P.$ Then $P \cong C_2^f$ and $K=N_G(P) \cong P \rtimes C_{q-1}$ is a maximal subgroup of $G.$ We may apply Lemma \ref{NC} to conclude that $x\not\mapsto y.$
	
	\noindent (b)  Assume that  $ |y|=t$ divides $q-1$ and set 
	 $K=N_G(\langle y \rangle).$ Then $K$ is a maximal subgroup of $G$ and $K\cong D_{2(q-1)}.$ Again we apply Lemma \ref{NC} to conclude that $x\not\mapsto y.$
	\end{proof}

\begin{theorem}
	Let $G=\sz(q)$ with $q=2^{2t+1}$ and $t\geq 1$. Then $\Gamma(G)$ is not strongly connected.
\end{theorem}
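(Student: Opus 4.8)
The plan is to adapt the argument used for $\PSL(2,q)$. Writing $r=2^{t+1}$, so that $r^2=2q$ and $q^2+1=(q-r+1)(q+r+1)$, recall that every nontrivial element of $G=\sz(q)$ has order dividing one of $4$, $q-1$, $q-r+1$ or $q+r+1$, and that these four numbers are pairwise coprime: each of $q-1,\ q-r+1,\ q+r+1$ is odd, $\gcd(q-r+1,q+r+1)$ divides $2r$ and is odd, hence equals $1$, while a divisor of $q^2+1$ is automatically coprime to $q(q-1)$. I would fix a nontrivial element $x$ whose order divides $q+r+1$ and prove that $x\mapsto y$ forces $|y|$ to divide $q+r+1$ as well. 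Granting this, the set $S$ of nontrivial elements of order dividing $q+r+1$ has no out-edge leaving it; since $S$ is a proper subset of the vertex set (there are involutions outside it), no directed path can reach an involution starting from $x$, and therefore $\Gamma(G)$ is not strongly connected.

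To control the out-neighbours of $x$ I would feed a suitable subgroup into Lemma \ref{NC}, one for each of the three remaining order types of $y$. If $y$ is a $2$-element, take $K=N_G(Q)=Q\rtimes C_{q-1}$, the Borel subgroup of order $q^2(q-1)$ with $y\in Q\le K$. If $|y|$ divides $q-1$, take $K=N_G(T)\cong D_{2(q-1)}$, where $T\cong C_{q-1}$ is the maximal torus containing $y$. If $|y|$ divides $q-r+1$, take $K=N_G(T')\cong C_{q-r+1}\rtimes C_4$, where $T'\cong C_{q-r+1}$ is the torus containing $y$. In each case $K$ is a maximal subgroup of the simple group $G$, so $N_G(K)=K$ and condition (1) of Lemma \ref{NC} holds.

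The content of condition (2), that $y\in K^g$ if and only if $K^g=K$, comes from the trivial-intersection property of the relevant subgroups of $\sz(q)$: distinct Sylow $2$-subgroups meet trivially, since a Sylow $2$-subgroup fixes a unique point of the natural $2$-transitive action on $q^2+1$ points and a two-point stabiliser has odd order $q-1$; and distinct conjugates of each maximal torus meet trivially. Now $y$ has odd order in cases (b) and (c), so it lies in the cyclic normal part $T^g$, respectively $T'^g$, of $K^g$ (the odd part of $K^g$); combined with the TI-property and $y\neq 1$ this gives $K^g=K$. The $2$-element case is identical, with the normal Sylow $2$-subgroup $Q^g$ of $K^g$ in place of the torus. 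Finally $x\notin K$, because $|x|$ divides $q+r+1$, which is coprime to $|K|$ in each of the three cases by the coprimality remarks above; Lemma \ref{NC} then yields $x\not\mapsto y$, completing the trichotomy and hence the argument.

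The main obstacle is not the graph-theoretic bookkeeping but assembling the precise structural facts about $\sz(q)$ that make Lemma \ref{NC} applicable: the list of element orders together with the factorisation $q^2+1=(q-r+1)(q+r+1)$, the identification of $N_G(Q)$, $N_G(T)$ and $N_G(T')$ as maximal (hence self-normalising) subgroups, and above all the trivial-intersection property of the Sylow $2$-subgroups and of the three classes of maximal tori. These are classical features of the Suzuki groups (see \cite{hb2}), so once they are recorded the coprimality estimates and the verification of the hypotheses of Lemma \ref{NC} are entirely routine.
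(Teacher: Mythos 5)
Your proposal is correct and follows essentially the same route as the paper: fix $x$ of order dividing $q+\sqrt{2q}+1$ and apply Lemma \ref{NC} with the Borel subgroup, the dihedral normaliser $D_{2(q-1)}$, and $C_{q-\sqrt{2q}+1}\rtimes C_4$ for the three remaining order types. The only cosmetic difference is that you verify condition (2) via the TI-property of the Sylow $2$-subgroups and tori, whereas the paper argues that $\langle y\rangle$ would be normal in $\langle K,K^g\rangle=G$; these are interchangeable.
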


\begin{proof}
	If $g \in G$ then the order of $G$ can be:
	\begin{itemize}
		\item $2$ or $4$;
		\item a divisor of $q-1;$
		\item a divisor of $q+\sqrt{2q}+1;$
		\item a divisor of $q-\sqrt{2q}+1.$
	\end{itemize}
	Now let $x\neq 1$ be an element whose order divides $q+\sqrt{2q}+1.$ We are going to prove that if $x \mapsto y$, for a nontrivial element $y \in G$,  then $|y|$ divides $q+\sqrt{2q}+1.$
	
	\noindent (a)  If $|y|$ is $2$ or $4,$ then  $y$ is contained in a unique Borel maximal subgroup, since the intersection of two distinct such subgroups does not contain $2$-elements. Therefore by Lemma \ref{NC} we conclude $x\not\mapsto y$.
	
	\noindent (b) If $|y|$ divides $q-1$,  let $K=N_G(C_G(y))$. This is a maximal subgroup of dihedral type, so it is self-normalizing. We have $\langle y\rangle\nn K$. If $y\in K^g$ with $K^g\neq K$, then $\gen{y}\nn K^g$ since it is the unique subgroup of $K^g$ of order $|y|$. Therefore $\gen{y}\nn\gen{K,K^g}=G$, which is impossible, so $K$ respects the requirements of Lemma \ref{NC} and $x\not\mapsto y$.
	
	\noindent (c) If $|y|$ divides $q-\sqrt{2q}+1$, 
	let $K=N_G(C_G(y))$. $K$ is a maximal subgroup of type $C_{q-\sqrt{2q}+1}\rtimes C_4$ and so it is self-normalizing. Since $\gen{y}$ is the unique subgroup of $K$ of order $|y|$, we can argue as in (b) and conclude $x\not\mapsto y$.
	\end{proof}
\section{Some computations}\label{sec:6}

Combining the available information about the connected components of the prime graph of $G$ (see \cite{wi}) with some  direct computations with GAP \cite{GAP4}, we checked that $\Gamma(G)$ is strongly connected if $G$ is a sporadic simple group, although its commuting graph $\Gamma_1(G)$ is not (notice that by Lemma \ref{norma} this implies that $\Gamma(\Aut(G))$ is also strongly connected). 
Indeed, let $\pi_1(G)$ be the connected components  of the prime graph of $G$ which contains 2. By \cite[Lemma 6.1]{mp}, there is a unique connected component of the commuting graph $\Gamma_1(G)$ containing all the elements of even order, and consequently there is a strong component of $\Gamma(G)$ containing all the elements whose order is divisible by a prime $p\in \pi_1(G).$ In particular, $\Gamma(G)$ is strongly connected if for any prime divisor $q$ of $|G|$ with
$q\not\in \pi_1(G),$ there exists a subgroup $H$ of $G$ whose order is divisible by $q$ and some prime in $\pi_1(G)$ and such that $\Gamma(H)$ is strongly connected. 

For example, the prime graph of $\rm{J}_2$ has two connected components, $\pi_1(\rm{J_2})=\{2,3,5\}$ and $\{7\},$ so it suffices to notice that $\rm{J}_2$ contains a subgroup $H\cong \PSL(2,7)$ and to check with GAP that $\Gamma(\PSL(2,7))$ is strongly connected.

 Similarly if $G=\rm{M},$ then 41, 59 and 71 are the only prime divisors of $G$ not contained in $\pi_1(G)$; since $\rm{M}$ contains subgroups isomorphic to $\PSL(2,41), \PSL(2,59)$ and $\PSL(2,71)$ (see \cite{atlas} and \cite{41}) we reduce to check that the Engel graphs of these subgroups are strongly connected.
 
  Another case worth mentioning is $G = \rm{J_1}$.
In that case the prime divisors of $|G|$ not contained in $\pi_1(G)$ are 7, 11, 19.
However $G$ contains a maximal subgroup $M_1\cong 2^3:7:3$ and a maximal subgroup $M_2\cong \PSL(2,11)$ and $\Gamma(M_1), \Gamma(M_2)$ are strongly connected. There is also a maximal subgroup isomorphic to 19:6 and consequently, by Lemma \ref{trivial}, $\Gamma(G)$ contains and edge from an element of order 6 to an element of order 19. Hence to prove that $\Gamma(G)$ is strongly connected it suffices to prove that there is an edge $x \mapsto y$, with $|x|=19$ and $|y|\neq 19.$ When we checked whether this is true, we found an unexpected phenomenon: there exists $g$ of order $19$ such that $g\mapsto y$ only if $y \in \langle g \rangle,$ but $g^2$ has a different behavior, since there are 38 involutions $z$ with $g\mapsto z.$ 

The four  sporadic groups $\{\rm {J_4, Ly, F_{24}^\prime, B}\}$ are more difficult to handle. In these cases the methods described above do not allow to deduce that there is an edge $x\mapsto y$ with $|x|=p$ and $|y|$ divisible by a prime in $\pi_1(G),$ when $(G,p) \in \{(\rm{J_4},29),  (J_4,43), (Ly,37),$ $(Ly,67), (F_{24}^\prime,29), (B,47)\},$ since $\Gamma(H)$ turns out to be disconnected for every subgroup $H$ of $G$ whose order is divisible by $p.$ In that case the following remark is of great help. As it can be easily verified, if $g_1,g_2$ are elements of an arbitrary group and $|g_2|=2,$ then $[g_1,_ng_2]=[g_1,g_2]^{(-2)^{n-1}}$. In our case, testing the commutators between a randomly chosen element of order $p$ and a random involution, we produce pair $(x,y)$ with $|x|=p, |y|=2$ and $[x,_3y]=[x,y]^4=1.$

We also used GAP to check that if $p$ is and odd prime with $p\leq 109,$ then the graph $\Gamma(\PSL(2,p))$ is strongly connected if and only if
$p \notin \{13, 29, 37, 53, 61, 101, 109\},$ i.e. if and only if $p\neq 5 \mod 8.$
We aim to investigate whether this is the general behavior in a subsequent paper.
 It is interesting to notice that
$G=\PSL(2,47)$ is the smallest simple group we know, with the property that $\Gamma_4(G)$ is strongly connected, but $\Gamma_3(G)$ is not. If $G=\PSL(2,127),$ 
then $\Gamma_7(G)$ is strongly connected, but $\Gamma_6(G)$ is not.

To perform these computations we wrote a function \lq\lq$\rm{eng}$" defined on the elements of $G$. This function, given two nontrivial elements 
 $x, y \in G$,  recursively 
 evaluates the commutators $[x,y], [x,_2 y] \dots$ and stops when either $[x,_n y]=1$ or $ [x,_{n_1} y]  = [x,_{n_2} y] \neq 1$ with $n_1\neq n_2.$ 
 The first component $\rm{eng}(x,y)[1]$ of the output of $\rm{eng}(x,y)$ is  either the trivial element,  if $[x,_ny]=1$ for some positive integer, or  a nontrivial  element $z$ with the property that $z= [x,_{n_1} y]=[x,_{n_2} y]$ with $n_1\neq n_2.$ The second component $\rm{eng}(x,y)[2]$  keeps track of the commutators that have been evaluated during the procedure. From Lemma \ref{trivial} (1) it follows that if $ 1 \neq [x,_{n_1} y]  = [x,_{n_2} y]$ with $n_1\neq n_2$,  then $x$ is not adjacent to $y$. 
 Therefore, $x\mapsto y$ if and only if $\rm{eng}(x,y)[1]=Identity(Group(x,y))$. In particular $x\mapsto_n y$ if and only if 
$\rm{eng}[1]=Identity(Group(x,y))$ and $\rm{Size(eng}(x,y)[2]) \leq n.$

\begin{lstlisting}
eng:=function(x,y)
local z,s;
s:=[Identity(Group(x,y))];z:=Comm(x,y);
while (z in s)=false do
Add(s,z);
z:=Comm(z,y);
od;
return[z, s];
end;
\end{lstlisting}

\end{document}